\newtheorem{theorem}{Theorem}[section]
\newtheorem{lemma}[theorem]{Lemma}
\newtheorem{claim}[theorem]{Claim}
\newtheorem{remark}[theorem]{Remark}
\numberwithin{equation}{section}
\newcommand{\Ai}{\text{Ai\,}}
\newcommand{\Tr}{\text{tr\,}}
\newcommand{\re}{\text{Re\,}}
\newcommand{\im}{\text{Im\,}}
\begin{document}
\setcounter{page}{1}

\thanks{Supported by the G\"oran Gustafsson Foundation (KVA) and the Swedish  
Research Council (VR)}


\title[Universality under weak moment conditions]
{Universality for certain Hermitian Wigner matrices under weak moment conditions}
\author[K.~Johansson]{Kurt Johansson}

\address{
Department of Mathematics,
Royal Institute of Technology,
SE-100 44 Stockholm, Sweden}

\email{kurtj@kth.se}

\begin{abstract}
We study the universality of the local eigenvalue statistics of Gaussian divisible Hermitian
Wigner matrices. These random matrices are obtained by adding an independent GUE matrix to an
Hermitian random matrix with independent elements, a Wigner matrix.
We prove that Tracy-Widom universality holds at the edge in this class of random matrices under 
the optimal moment condition that there is a uniform bound on
the fourth moment of the matrix elements. Furthermore, we show that universality holds in the bulk 
for Gaussian divisible Wigner matrices if we just assume finite second moments. 
\end{abstract}

\maketitle

\section{Introduction and results}\label{sect1}
\subsection{Introduction}\label{sect1.1}
An Hermitian Wigner matrix is a random Hermitian matrix  with independent elements respecting
the Hermitian symmetry. The local eigenvalue statistics of these random matrices is expected to
be universal in the sense that it is independent of the distribution of the individual matrix
elements, at least under suitable assumptions on the moments of the elements. There are two basic cases.
We can either look in the bulk of the spectrum or at the edge around the largest eigenvalue.
It is conjectured that, if we assume that the real and imaginary parts of the elements all have mean
value zero, variance $\sigma^2>0$ and that there is a uniform bound on the fourth moment, then the 
appropriately scaled eigenvalue point process at the edge should converge to the Airy kernel 
point process. Furthermore
the largest eigenvalue should asymptotically fluctuate according to the Tracy-Widom distribution.
This problem is still open, but there are results under stronger moment assumptions. The 
breakthrough result by Soshnikov, \cite{So1}, showed that the result is true if the 
distribution is symmetric and has sub-gaussian tails. Soshnikov's result is based on moment methods. 
The condition on the moments has been weakened to $18+\epsilon$ moments (or $36+\epsilon$ moments, see
\cite{ABP}) in \cite{Ru}.

In the bulk it is expected that the local 
eigenvalue point process converges to the sine-kernel point process.
The exact conditions needed for this to be true are not clear. The result in the bulk was proved for
a sub-class of Wigner matrices, so called Gaussian divisible Hermitian Wigner matrices in 
\cite{JoUniv}. A Gaussian divisible Hermitian Wigner matrix is an Hermitian Wigner matrix
$W$ of the form $W=X+\sqrt{\kappa} V$, where $X$ is an Hermitian Wigner matrix and $V$ an independent
GUE matrix. In \cite{JoUniv} it was assumed that the elements of $X$ have uniformly bounded $6+\epsilon$
moments. Spectacular progress has recently been made on this problem by Tao and Vu, \cite{TV1},
with their four-moment theorem, and by Erd\"os, Ramirez, Schlein and H.-T Yau using a different approach,
\cite{ERSY}.
Tao and Vu assume subexponential tails for the distribution of the matrix elements.
Erd\"os, Ramirez, Schlein and H.-T Yau make rather strong regularity assumptions on the distribution
and parts of the argument use methods related to the approach in \cite{JoUniv} and this paper.
A combined effort, \cite{ERSTVY}, removed some of the assumptions in \cite{TV1}. Thus,
the universality result in the bulk is now established under the assumption of subexponential decay 
of the tails of the distributions of the matrix elements \footnote{Very recently \cite{TV3} the
assumption on the distribution has been reduced to a finite but large number of moments} .

Very recently, Tao and Vu, \cite{TV2}, also generalized Soshnikov's result using an approach 
analogous to that in their paper on bulk universality. They obtain universality at the edge under the
assumption of subexponential deacy and vanishing third moments. The result in this paper can be used to
remove this third moment assumption, see theorem \ref{thm1.3}.

The four-moment theorem indicates that the class of Gaussian divisible Wigner matrices is a good
testing ground for what we can expect for General Wigner matrices.
In this paper we therefore return to the case of Gaussian divisible Hermitian Wigner matrices with the aim of 
establishing universality results within this class under weak moment conditions. In particular, we
prove universality at the edge under the optimal assumption that the fourth moment is finite.
It is known that if we have fewer than four moments then the behaviour around the largest eigenvalue is 
instead described by a Poisson process, see \cite{ABP}, \cite{So2}, \cite{BBP}.

We also show universality in the bulk within the class of Gaussian divisible Hermitian Wigner
matrices under the assumption that the second moment is finite. It is not claer that this is the optimal 
condition. Rather, close to the origin we should still expect sine-kernel universality even if the
second moment is infinite, see \cite{CB}.

The results are obtained using a development of the techniques in \cite{JoUniv} which were based on a 
contour integral formula for a correlation kernel from \cite{BrHi}. In \cite{JoUniv} an important
tool was a concentration of measure estimate from \cite{GuZe}, which led to a uniform estimate  
of the Stieltjes transform of the empirical spectral measure of $X$ in a region of the complex plane.
Here, due to the weak moment assumptions we are unable to use this result and we have to be satisfied with
weaker pointwise control of the Stieltjes transform. This requires a modification of the 
analysis in \cite{JoUniv} and a more careful choice of contours, since we have do not have
the same good contol of the empirical spectral measure of $X$. The pointwise control of expectations
of the Stieltjes transform that we need is adapted from \cite{Bai1} and \cite{BMT}.

\subsection{Results}\label{sect1.2}
We turn now to precise statements of our results. The $n\times n$ random matrix $X$ is an {\it Hermitian
Wigner matrix} if $X=(x_{ij})$ is Hermitian, $\re x_{ij}$, $\im x_{ij}$, $1\le i<j\le n$ and $X_{jj}$,
$1\le j\le n$ are all independent and satisfy
\begin{itemize}
\item[(i)] $\mathbb{E}[\re x_{ij}]=\mathbb{E}[\im x_{ij}]=0,\quad 1\le i\le j\le n$,

\item[(ii)] $\mathbb{E}[(\re x_{ij})^2]=\mathbb{E}[(\im x_{ij})^2]=\sigma^2/2,\quad 1\le i< j\le n$,

\item[(iii)] $\mathbb{E}[x_{jj}^2]=\sigma^2$.

\end{itemize}

We will say that $W$ is a Gaussian divisible Hermitian Wigner matrix if it can be written
\begin{equation}\label{1.1}
W=X+\sqrt{\kappa} V,
\end{equation}
where $X$ is an Hermitian Wigner matrix, $\kappa$ a positive constant and $V$ an independent
GUE-matrix. We take the GUE-measure to be
\begin{equation}
\frac 1{Z_n}e^{-\Tr V^2/2}\,dV.
\notag
\end{equation}
Without loss of generality we can choose the variance $\sigma^2=1/4$.

Let $\{\lambda_j\}$ be the eigenvalues of $\sqrt{n}W$. Then the sequence 
$\{\lambda_j/\sqrt{n}\}$ is asymptotically
distributed according to {\it the Wigner semi-circle law},
\begin{equation}\label{1.2}
\rho(x)=\frac 2{\pi(1+4\kappa)}\sqrt{(1+4\kappa-x^2)_+}.
\end{equation}
Let $C_c(\mathbb{R})$ denote the set of all continuous functions with compact support, and 
$C_c^+(\mathbb{R})$ the subset of $C_c(\mathbb{R})$ of non-negative functions. For $b>0$ let
\begin{equation}\label{1.3}
K^b_{\text{sine}}(u,v)=\frac{\sin b(u-v)}{\pi(u-v)}
\end{equation}
be the {\it sine kernel} with density $b/\pi$. The {\it sine-kernel point process} on infinite
point configurations $\{\mu_j\}$ on the real line is the determinantal point process
defined by
\begin{equation}\label{1.4}
\mathbb{E}^b_{\text{sine}}\left[\exp(-\sum_j\psi(\mu_j))\right]=\det(I-\phi^{1/2}
K^b_{\text{sine}}\phi^{1/2})
\end{equation}
for all $\psi\in C^+_c(\mathbb{R})$, where $\phi=1-e^{-\psi}$. Here, the right hand side is the
Fredholm determinant on $L^2(\mathbb{R})$ with kernel $\phi^{1/2}K^b_{\text{sine}}\phi^{1/2}$.

\begin{theorem}\label{thm1.1}
Let $W$ be a Gaussian divisible Hermitian Wigner matrix with finite second moments as in (\ref{1.1}),
and let $\{\lambda_j\}$ be the eigenvalues of $\sqrt{n}W$. Assume that $d_n/n\to d$ as $n\to\infty$, 
where $|d|<\sqrt{1+4\kappa}$, and let
\begin{equation}\label{1.5}
\beta=\frac 2{1+4\kappa}\sqrt{1+4\kappa-d^2}.
\end{equation}
Then,
\begin{equation}\label{1.6}
\lim_{n\to\infty}\mathbb{E}\left[\exp(-\sum_{j=1}^n\psi(\lambda_j-d_n))\right]=
\mathbb{E}^{\beta}_{\text{{\rm sine}}}\left[\exp(-\sum_j\psi(\mu_j))\right]
\end{equation}
for all $\psi\in C^+_c(\mathbb{R})$.
\end{theorem}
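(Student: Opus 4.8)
The plan is to exploit the Gaussian divisibility of $W$ to get an exact determinantal formula for the eigenvalue correlation functions, conditionally on $X$, and then to perform an asymptotic analysis of the resulting correlation kernel. Writing $\sqrt{n}W=\sqrt{n}X+\sqrt{n\kappa}V$, the eigenvalues of $\sqrt{n}W$ form, conditionally on $X$, a determinantal point process whose kernel is given by the Br\'ezin--Hikami contour integral formula from \cite{BrHi}: with $a_1,\dots,a_n$ the eigenvalues of $\sqrt{n}X$ and time parameter $t=n\kappa$, the kernel can be written as a double contour integral over $w$ (a small loop around the $a_j$'s) and $z$ (a vertical line), with integrand involving $e^{(w^2-z^2)/2t-(w-z)d_n}\prod_j \frac{z-a_j}{w-a_j}$ and a factor $1/(z-w)$. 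This is exactly the starting point used in \cite{JoUniv}.

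The second step is the saddle-point analysis. After rescaling $u,v$ near the point $d_n$ at the appropriate microscopic scale $1/(n\beta)$, the phase function in the contour integral is controlled by the Stieltjes transform $m_n(\zeta)=\frac1n\sum_j\frac1{\zeta-a_j}$ of the empirical spectral measure of $\sqrt{n}X$. The saddle points are determined by the equation $\frac{\zeta-d_n}{n\kappa}=m_n(\zeta)$, which, since the empirical measure of $X$ converges to the semicircle law of variance $\sigma^2=1/4$, has complex conjugate solutions whose imaginary part yields precisely the density $\beta/\pi$ in \eqref{1.5}. I would deform the $w$-contour and $z$-contour to pass through these (approximate) saddle points; on the resulting contours the integrand concentrates, and a local expansion shows the kernel converges, after conjugation by a suitable gauge factor, to the sine kernel $K^\beta_{\text{sine}}(u,v)$. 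Convergence of the rescaled correlation kernel, together with a domination argument and the standard machinery relating Fredholm determinants to the Laplace-transform-type functional in \eqref{1.4} (test function $\psi\in C^+_c$, $\phi=1-e^{-\psi}$), then gives \eqref{1.6}; one also needs to take the expectation over $X$ at the end, using that the convergence is uniform enough in the randomness of $X$.

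The main obstacle, and the place where this paper must genuinely differ from \cite{JoUniv}, is controlling the contours under only a finite second moment assumption. In \cite{JoUniv} a concentration-of-measure estimate from \cite{GuZe} gave a strong uniform bound on $m_n(\zeta)$ in a complex neighbourhood, which made it safe to deform the $w$-loop close to the real axis without it colliding badly with the (random) eigenvalues $a_j$ of $\sqrt{n}X$, and gave good control of the phase. With only two moments one cannot expect such concentration; instead one has to settle for pointwise (in $\zeta$) control of $\mathbb{E}[m_n(\zeta)]$ and of its fluctuations, of the type developed in \cite{Bai1} and \cite{BMT}, showing that $\mathbb{E}[m_n(\zeta)]$ is close to the Stieltjes transform $m_{\text{sc}}(\zeta)$ of the semicircle law and that the variance is small, for $\zeta$ with $\text{Im}\,\zeta$ not too small. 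The art is then to choose the $w$- and $z$-contours so that they only pass through regions where this weaker control is available — in particular keeping $\text{Im}\,w$ bounded away from zero on the relevant part of the loop — while still being close enough to the saddle points that the concentration of the integrand, and hence the limiting sine-kernel form, is preserved. Managing the trade-off between these two requirements, and absorbing the resulting error terms, is the technical heart of the argument; the rest is a careful but fairly standard steepest-descent computation.
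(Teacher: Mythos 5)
Your proposal correctly identifies both the overall strategy and the key new technical ingredient, and it matches the paper's approach: conditioning on $X$ to get the Br\'ezin--Hikami kernel, a saddle-point analysis producing the sine kernel, and replacing the Guionnet--Zeitouni concentration used in \cite{JoUniv} by pointwise variance estimates of Bai/Bai--Miao--Tsay type for $m_n$ combined with Chebyshev, together with a choice of contours tailored to this weaker control. The paper packages this a bit more explicitly than your sketch --- it isolates three scalar functionals $b(\nu)$, $D(\nu)$, $A(\nu)$ of the empirical measure that alone control the kernel (Theorem~\ref{thm2.1}), shows these concentrate (Lemma~\ref{lem2.6}), and uses Hurwitz' theorem to extract a deterministic centering/scaling sequence $\delta_n+\beta_n i$ (Lemma~\ref{lem2.4}) --- but this is the same argument you describe.
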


The theorem will be proved in section \ref{sect2.2}. The theorem shows that the appropriately 
scaled eigenvalue point process converges weakly in the bulk, i.e. in the interior of
the support of the semi-circle law, (\ref{1.2}), to the sine kernel point process with
density given by the semi-circle law. This theorem is an extension of the main result theorem in 
\cite{JoUniv}, see also \cite{BePe}.

We turn now to the edge behaviour. It is known that if the matrix elements are heavy-tailed with
no fourth moment, then the eigenvalue point process at the edge converges to a Poisson point 
process with a certain density, see \cite{ABP}, \cite{So2} and \cite{BBP}. Thus, in order to
get the same edge behaviour as for GUE we have to assume at least that the fourth moment
is finite. It is known, see \cite{Bai2}, that finite fourth moments is necessary and sufficient for
the largest eigenvalue to converge to the edge of the support of the semi-circle. We will show that
within the class of Gaussian divisible Wigner matrices finite fourth moments suffices 
for Tracy-Widom asymptotics.

The eigenvalue statistics of a GUE-matrix at the edge is described by the Airy kernel
point process. The {\it Airy kernel} is defined by
\begin{equation}\label{1.7}
A(x,y)=\int_0^\infty\Ai(x+t)\Ai(y+t)\,dt=
\frac{\Ai(x)\Ai'(y)-\Ai'(x)\Ai(y)}{x-y}.
\end{equation}
The {\it Airy kernel point process} on infinite point configurations $\{\mu_j\}$ on the real line is 
the determinantal point process defined by
\begin{equation}\label{1.8}
\mathbb{E}_{\text{Airy}}\left[\exp(-\sum_j\psi(\mu_j))\right]=\det(I-\phi^{1/2}A\phi^{1/2}),
\end{equation}
for all $\psi\in C^+_c(\mathbb{R})$, where $\phi=1-e^{-\psi}$. The Airy kernel point process
has almost surely a last particle $\mu_{\max}$ whose distribution is given by the
{\it Tracy-Widom distribution},
\begin{equation}\label{1.9}
\mathbb{P}_{\text{Airy}}[\mu_{\max}\le t]=F_{\text{TW}}(t)=\det(I-A)_{L^2(t,\infty)}.
\end{equation}
Here $\det(I-A)_{L^2(t,\infty)}$ is the Fredholm determinant of the trace-class operator
on $L^2(t,\infty)$ with integral kernel $A(x,y)$.

We can now state our result on the edge statistics.

\begin{theorem}\label{thm1.2}
Let $W$ be a Gaussian divisible Hermitian Wigner matrix, (\ref{1.1}), wth finite fourth moments, 
i.e. there is a constant $K<\infty$ independent of $n$ such that
\begin{equation}\label{1.10}
\max_{1\le i\le j\le n}\mathbb{E}[|x_{ij}|^4]\le K.
\end{equation}
Let $\{\lambda_j\}$ be the eigenvalues of $\sqrt{n}W$, and let
\begin{equation}
\gamma=\sqrt{1+4\kappa},\quad\delta=\frac 12 \sqrt{1+4\kappa}.
\notag
\end{equation}
Then,
\begin{equation}\label{1.11}
\lim_{n\to\infty}\mathbb{E}\left[\exp(-\sum_{j=1}^n\psi((\lambda_j-\gamma n)/\delta n^{1/3}))\right]
=\mathbb{E}_{\text{{\rm Airy}}}\left[\exp(-\sum_j\psi(\mu_j))\right]
\end{equation}
for all $\psi\in C^+_c(\mathbb{R})$. Furthermore, if $\lambda_{\max}=\max_{1\le j\le n}\lambda_j$, then
\begin{equation}\label{1.12}
\lim_{n\to\infty}\mathbb{P}[(\lambda_{\max}-\gamma n)/\delta n^{1/3}\le t]=F_{\text{{\rm TW}}}(t),
\end{equation}
for all $t\in\mathbb{R}$.
\end{theorem}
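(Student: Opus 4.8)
The plan is to follow the strategy of \cite{JoUniv}, working from the Br\'ezin--Hikami contour integral representation for the correlation kernel of $W=X+\sqrt\kappa\,V$ conditioned on $X$. Since $\sqrt\kappa V$ is a GUE perturbation, for fixed $X$ the eigenvalues of $\sqrt n W$ form a determinantal process with a kernel $K_n(x,y)$ expressible as a double contour integral involving $\prod_j(z-\mu_j)/(w-\mu_j)$, where $\{\mu_j\}$ are the eigenvalues of $\sqrt n X$; equivalently the relevant object is the exponential of $n$ times a function built from the Stieltjes transform $g_n(z)=\frac1n\sum_j(z-\mu_j)^{-1}$ of the empirical spectral measure of $X$. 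The first step is to write down this formula precisely and to rescale variables near the spectral edge $\gamma n$ on the scale $\delta n^{1/3}$, so that the scaling limit should produce the Airy kernel $A(x,y)$ from \eqref{1.7} via the classical steepest-descent analysis (the integrand's phase has a cubic critical point at the edge, which is exactly the Airy situation).

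The heart of the matter is controlling $g_n(z)$ along the chosen contours. In \cite{JoUniv} a concentration estimate from \cite{GuZe} gave a uniform bound on $g_n(z)-\mathbb{E}[g_n(z)]$ in a complex region; under only a fourth-moment hypothesis this is unavailable, so the next step is to establish the weaker, pointwise control of $\mathbb{E}[g_n(z)]$ adapted from \cite{Bai1} and \cite{BMT}: namely that $\mathbb{E}[g_n(z)]$ is close, with a quantitative rate, to the Stieltjes transform $g(z)$ of the semicircle law, for $z$ at a controlled distance from the real axis, and that $\mathrm{Var}(g_n(z))$ is small. Because we only have these estimates pointwise (and not uniformly on a fixed-width strip), the contours in the double integral must be chosen with care: they should be routed so that the $z$-contour passes through the edge critical point along the steepest-descent direction, stay at imaginary part bounded below (of order $n^{-1/3}$ or a small negative power of $n$) so the Stieltjes-transform estimates apply, and have the real part of the phase strictly controlled away from the critical point so the tails of the integral are negligible. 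One then substitutes the deterministic semicircle approximation into the phase, pays for the error using the pointwise/variance bounds, and shows the random fluctuation of the phase is $o(1)$ uniformly on the (finite, after rescaling) portion of the contour that matters, with an exponential decay estimate killing the rest.

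With the kernel convergence $K_n\to A$ in hand, uniformly on compacts after the edge rescaling, the next step is the standard passage from kernel convergence to convergence of the Laplace functionals. For a fixed $\psi\in C_c^+(\mathbb{R})$ one writes $\mathbb{E}[\exp(-\sum_j\psi((\lambda_j-\gamma n)/\delta n^{1/3}))]$ as a Fredholm determinant $\det(I-\phi^{1/2}K_n\phi^{1/2})$ with $\phi=1-e^{-\psi}$, and uses the locally uniform convergence of the kernels together with a trace-class / Hadamard-bound argument to pass to $\det(I-\phi^{1/2}A\phi^{1/2})$, which by \eqref{1.8} is the right-hand side of \eqref{1.11}. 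For \eqref{1.12} one notes that $\{(\lambda_{\max}-\gamma n)/\delta n^{1/3}\le t\}$ is the event of no particles in $(t,\infty)$; approximating the indicator of $(t,\infty)$ from inside and outside by functions in $C_c^+(\mathbb{R})$, \eqref{1.11} gives convergence of the gap probability to $\det(I-A)_{L^2(t,\infty)}=F_{\text{TW}}(t)$, using continuity of $F_{\text{TW}}$ to remove the approximation. The main obstacle, as flagged in the introduction, is the contour-selection and the phase-error analysis under the weak moment condition: one must make the steepest-descent estimate work with only pointwise, slowly decaying control of $\mathbb{E}[g_n(z)]$ and a crude variance bound, rather than the strong uniform concentration used in \cite{JoUniv}; in particular handling the region of the contour near the real axis where many eigenvalues $\mu_j$ may cluster, and bounding the contribution of possible outliers of $\sqrt n X$ (which a fourth moment just barely controls), is the delicate part of the argument.
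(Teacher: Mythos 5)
Your broad strategy is the right one and matches the paper's: use the Br\'ezin--Hikami double contour formula for the conditional kernel, steepest descent near the cubic critical point to produce the Airy kernel, replace the concentration estimate of \cite{GuZe} (unavailable under a fourth-moment hypothesis) by the pointwise Stieltjes-transform estimates of \cite{Bai1,BMT}, and pass from locally uniform kernel convergence to Fredholm determinants and then to the gap probability. You also correctly flag that the fourth moment enters both in the variance bound for $m_n$ and in controlling outliers of $\sqrt n X$.

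There is, however, a genuine gap in how you propose to close the argument. You suggest substituting the deterministic semicircle approximation into the phase along the steepest-descent contour and then showing the random fluctuation of the phase is $o(1)$ on the relevant part of the contour, with the contour kept at distance about $n^{-1/3}$ (or ``a small negative power'') from $\mathbb{R}$ so the Stieltjes-transform estimates apply. That plan has a tension: the critical point $b$ is real, the saddle-point contour must pass through it, and the only estimates available under four moments, $|\mathbb{E}[m_n(z)]-m(z)|\lesssim n^{-1}|\im z|^{-5}$ and $\mathbb{E}|m_n(z)-\mathbb{E}m_n(z)|^2\lesssim n^{-2}|\im z|^{-4}$, are useless at $|\im z|\sim n^{-1/3}$ (the expectation bound is of order $n^{2/3}$ there) and only become effective for $|\im z|\gtrsim n^{-1/5}$, i.e. much farther from the axis than the saddle scale $n^{-1/3}$. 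So one cannot ``pay for the phase error'' pointwise along the contour. The missing idea --- and the crux of the paper's argument --- is a two-step separation: first do the saddle-point analysis \emph{deterministically}, for any point configuration $\nu$ lying in a good set $F_n$ defined by a uniform two-sided bound $\alpha_0\le b(\nu)-\nu_j/S\le\beta_0$, and design the contours so that the full $\nu$-dependence of the kernel asymptotics collapses into just three scalars $a(\nu),b(\nu),d(\nu)$ (defined through $\sum_j S(bS-\nu_j)^{-2}=1$, etc.); second, show that the random $\nu=y(X)$ lands with probability tending to one in a refined good set $G_n\subseteq F_n$ on which $a,b,d$ are within $o(n^{-1/3})$, $o(1)$, $o(1)$ of their deterministic edge values. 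The probabilistic step reduces to concentration of smooth linear statistics $\sum_j\phi(\nu_j/n)$, which is obtained \emph{not} by controlling $m_n$ near the real axis, but by mollifying $\phi$ to an entire function $\phi_A$ of exponential type, representing $\phi_A$ by a Cauchy integral along horizontal lines at height $v=n^{-\delta_0}$ with $\delta_0\le 1/10$, and only then invoking the Stieltjes-transform bounds (together with $\lambda_{\max}(X)/\sqrt n\to 2\sigma$ from \cite{Bai2}, which needs the fourth moment, to confine the $\nu_j$). Without this decomposition and the mollification device, your plan cannot be executed as written.
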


The theorem will be proved in section \ref{sect3.2}.

\begin{remark} \rm When we have two but not four moments we have asymptotically the semi-circle law,
the local eigenvalue statistics in the bulk is given by the sine-kernel point process, but the local
eigenvalue statistics around the largest eigenvalue, which lies outside the semi-circle, is given by
a Poisson process. It would be interesting to investigate the change in statistics as we move towards the
edge. In terms of eigenvectors we should move from localized eigenvectors to de-localized eigenvectors.
This problem is perhaps even more interesting when we have heavy-tailed distributions with unbounded 
variance. The global eigenvalue distribution is then no longer given by the semi-circle law and the scaling
is different, \cite{BG}. See \cite{CB} for a discussion. It is possible that the methods of the present paper
could be extended to yield e.g. the sine-kernel point process close to the origin in this case also. 
This would probably require an 
improvement of the estimate (\ref{2.27}), which still holds, but is not good enough.\it
\end{remark}

As mentioned in the introduction Tao and Vu have recently extended the four-moment theorem to the edge,
but since they compared with GUE they had to assume vanishing third moment. By combining with theorem
\ref{thm1.2} we can see that the third moment condition is not necessary.
We formulate this only for the fluctuations of the largest eigenvalue.

\begin{theorem}\label{thm1.3}
Assume that $M=(m_{ij})$ is an Hermitian Wigner matrix with subexponential decay, i.e. there are
constants $C,C'>0$ such that
\begin{equation}
\mathbb{P}[|m_{ij}|\ge t^C]\le e^{-t}
\notag
\end{equation}
for all $t\ge C'$ and all $1\le i\le j\le n$. Let $\lambda_{\max}$ be the largest eigenvalue of $\sqrt{n}M$,
and assume that the variance $\sigma^2=1$. Then
\begin{equation}\label{1.13}
\lim_{n\to\infty}\mathbb{P}[(\lambda_{\max}-2n)/n^{1/3}\le t]=F_{\text{{\rm TW}}}(t),
\end{equation}
for all $t\in\mathbb{R}$.
\end{theorem}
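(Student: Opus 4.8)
\section*{Proof proposal for Theorem~\ref{thm1.3}}

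The plan is to derive Theorem~\ref{thm1.3} by combining Theorem~\ref{thm1.2} with the edge version of the moment comparison (``four moment'') theorem of Tao and Vu, \cite{TV2}. Recall that in \cite{TV2} Tracy--Widom asymptotics for $\sqrt nM$ is obtained by comparing $\sqrt nM$ with $\sqrt n$ times a GUE matrix; since the comparison at the edge only requires the first three moments of the matrix entries to agree, and since the odd moments of GUE entries vanish, this forces the hypothesis that the third moments of $M$ vanish. We will instead compare $\sqrt nM$ with $\sqrt nW$ for a Gaussian divisible Hermitian Wigner matrix $W=X+\sqrt\kappa V$ whose entries have the \emph{same} first three moments as those of $M$. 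Theorem~\ref{thm1.2} then supplies the Tracy--Widom limit for $W$ whatever the value of these third moments, and the edge comparison theorem transports it to $M$.

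Concretely, I would fix any $\kappa\in(0,1)$ and build $X$ as follows. For each pair $i<j$ choose bounded real random variables with mean zero, variance $(1-\kappa)/2$ and third moments equal to $\mathbb{E}[(\re m_{ij})^3]$ and $\mathbb{E}[(\im m_{ij})^3]$ respectively, and for each $j$ a bounded real random variable with mean zero, variance $1-\kappa$ and third moment $\mathbb{E}[m_{jj}^3]$; a mean-zero distribution with a prescribed variance and third moment always exists (for instance supported on two or three points) and may be taken bounded. Declare these to be the real and imaginary parts of the $x_{ij}$ and the diagonal entries $x_{jj}$, all independent, so that $X$ is an Hermitian Wigner matrix with $\sigma_X^2=1-\kappa$ and with bounded, hence subexponentially decaying, entries. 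Since $V$ is GUE the odd moments of its entries vanish and the variances add, so the entries of $W=X+\sqrt\kappa V$ have mean zero, variance matching those of $M$ (as $\sigma_W^2=\sigma_X^2+\kappa=1$), and third moments matching those of $M$; moreover $W$ has finite fourth moments because $X$ is bounded and $V$ is Gaussian.

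Next I would apply Theorem~\ref{thm1.2} to $W$. The rescaled matrix $\tilde W=(2\sqrt{1-\kappa})^{-1}W$ is Gaussian divisible of the form $\tilde X+\sqrt{\kappa'}\,V$ with $\tilde X=(2\sqrt{1-\kappa})^{-1}X$ an Hermitian Wigner matrix of variance $1/4$ and $\kappa'=\kappa/(4(1-\kappa))$, and it has finite fourth moments; so Theorem~\ref{thm1.2} gives $(\lambda_{\max}(\sqrt n\tilde W)-\gamma n)/(\delta n^{1/3})\to F_{\mathrm{TW}}$ with $\gamma=\sqrt{1+4\kappa'}=(1-\kappa)^{-1/2}$ and $\delta=\gamma/2$. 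Since $\lambda_{\max}(\sqrt n\tilde W)=(2\sqrt{1-\kappa})^{-1}\lambda_{\max}(\sqrt nW)$, dividing numerator and denominator by $(2\sqrt{1-\kappa})^{-1}$ turns the centering $\gamma n$ into $2n$ and the scale $\delta n^{1/3}$ into $n^{1/3}$; hence $(\lambda_{\max}(\sqrt nW)-2n)/n^{1/3}\to F_{\mathrm{TW}}$, for every $\kappa\in(0,1)$.

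Finally, $M$ and $W$ are both Hermitian Wigner matrices with subexponentially decaying entries, with variance one, and with the same first three moments of all entries, so the edge comparison theorem of \cite{TV2} shows that $(\lambda_{\max}(\sqrt nM)-2n)/n^{1/3}$ and $(\lambda_{\max}(\sqrt nW)-2n)/n^{1/3}$ have the same limiting distribution, which by the previous paragraph is $F_{\mathrm{TW}}$. This is \eqref{1.13}. The only genuine input beyond Theorem~\ref{thm1.2} is the edge comparison theorem of \cite{TV2} in the form where matching the first three moments suffices; the construction of $X$ is elementary precisely because no fourth-moment condition is imposed on it, which is what permits a fixed $\kappa>0$. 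Had four moments needed to match, the minimal-kurtosis (two-point) atom distributions of $M$ could not be realized by a genuinely Gaussian divisible matrix with $\kappa$ bounded away from zero, and one would instead have to take $\kappa=\kappa_n\to0$ at a small polynomial rate and absorb the residual fourth-moment mismatch into the $O(n^{-c})$ tolerance of the four-moment theorem; I expect this degenerate-case bookkeeping to be the only subtle point were that route needed.
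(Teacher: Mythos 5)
Your argument follows the same route as the paper: replace $M$ by a Gaussian divisible Wigner matrix $W$ whose entries match those of $M$ to third order, invoke Theorem~\ref{thm1.2} to get Tracy--Widom for $W$, and then transport the limit to $M$ via the Tao--Vu edge comparison theorem. The only cosmetic difference is that where the paper simply cites \cite{TV1} for the existence of such a moment-matching Gaussian divisible matrix, you spell out the elementary construction (bounded atom distributions for $X$ with prescribed variance and third moment, plus the GUE part) and carry out the scaling bookkeeping explicitly; both are correct.
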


\begin{proof} We can choose a Gaussian divisible Wigner matrix $M'$ so that the moments
of $M$ and $M'$ match up to order three, see \cite{TV1}. The result then follows from
(\ref{1.12}) and theorem \cite{TV2}, theorem 1.13; compare the proof of theorem 1.16 in \cite{TV2}.
\end{proof}

\section{Bulk universality}\label{sect2}

\subsection{Convergence to the sine kernel point process}\label{sect2.1}

Consider $n$ Brownian motions $x_1(t),\dots,x_n(t)$ on $\mathbb{R}$ starting at 
$\nu_1,\dots,\nu_n$ and conditioned never to intersect. The random positions at time 
$S$ then form a determinantal point process with correlation kernel
\begin{equation}\label{2.1}
K_{n,S}^{\nu}(u,v)=\frac 1{(2\pi i)^2S}\int_{\gamma_L}\,dz\int_{\Gamma_M}\,dw
e^{(w^2-2vw-z^2+2uz)/2S}\frac 1{w-z}\prod_{j=1}^n\frac {w-\nu_j}{z-\nu_j},
\end{equation}
where $\nu=\{\nu_j\}_{j=1}^n$, $\gamma_L$ is the contour given by the positively 
oriented rectangle with corners at $\pm L\pm i$ and $\Gamma_M$ is the contour given by
$s\to M+is$, with $M\ge L$, see \cite{JoUniv}. Here $L$ is chosen so large that all 
the points $\nu_j$ lie inside $\gamma_L$.
Let $\mathbb{E}_{\nu}$ denote the expectation with respect to the family of non-intersecting
Brownian motions, and let $\phi\in C_c(\mathbb{R})$ satisfy $0\le \phi\le 1$. Then,
\begin{equation}\label{2.2}
\mathbb{E}^{\nu}[\prod_{j=1}^n(1-\phi(x_j(S)))]=
\det(I-\phi^{1/2}K_{n,S}^{\nu}\phi^{1/2}),
\end{equation}
where the right hand side is a Fredholm determinant on $L^2(\mathbb{R})$ with respect
to the finite rank kernel $\phi^{1/2}K_{n,S}^{\nu}\phi^{1/2}$. 

This is useful for studying Gaussian divisible Wigner matrices because of the
following fact. Let $\mathbb{E}_X$ denote the expectation with respect to the Wigner matrix $X$ and let
$y(X)=\{y_j(X)\}_{j=1}^n$ be the eigenvalues of $\sqrt{n}X$. Furthermore let $\mathbb{E}_W$ denote
the expectation with respect to the Gaussian divisible Wigner matrix $W$, (\ref{1.1}).
Then, \cite{JoUniv}, for $\psi\in C_c^+(\mathbb{R})$,
\begin{equation}\label{2.19}
\mathbb{E}_W\left[\exp(-\sum_{j=1}^n\psi(\lambda_j-d_n))\right]=\mathbb{E}_X
\left[\mathbb{E}^{y(X)}\left[\exp(-\sum_{j=1}^n\psi(x_j(S_n)-d_n))\right]\right],
\end{equation}
where $\{\lambda_j\}$ are the eigenvalues of $W$ and $S_n=\kappa n$. To use this formula we need good control
of the kernel $K_{n,S}^\nu$ given by (\ref{2.1}) for all $\nu=y(X)$ except a set whose
probability is negligible.

Define, for a given set $\nu$ and positive number $S$ 
\begin{equation}\label{2.3}
B_{n,S}=\{\nu\,;\,\text{there is a $b>0$ such that $\sum_{j=1}^n\frac {S}{\nu_j^2+b^2S^2}=1$}\}.
\end{equation}
Hence, if $\nu\in B_{n,S}$, there is a unique $b=b(\nu)$ such that
\begin{equation}\label{2.4}
\sum_{j=1}^n\frac {S}{\nu_j^2+b^2S^2}=1.
\end{equation}
Furthermore, define for $v\in B_{n,S}$,
\begin{equation}\label{2.5}
D(\nu)=\sum_{j=1}^n\frac {\nu_j}{\nu_j^2+b^2S^2},
\end{equation}
and
\begin{equation}\label{2.6}
A(\nu)=\sum_{j=1}^n\frac {S^3b^2}{(\nu_j^2+b^2S^2)^2}.
\end{equation}
(We suppress the dependence on $n$ and $S$ in the notation for these quantities.)

We have the following approximation theorem.
\begin{theorem}\label{thm2.1} If $\nu\in B_{n,S}$ there is a numerical constant $C$ such that
\begin{equation}\label{2.7}
\left|K_{n,S}^{\nu}(u-SD(\nu), v-SD(\nu))-\frac{\sin b(\nu)(u-v)}{\pi(u-v)}\right|
\le\frac{C}{\sqrt{SA(\nu)}}e^{3u^2/A(\nu)}.
\end{equation}
\end{theorem}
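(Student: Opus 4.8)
The plan is to carry out a saddle-point analysis of the double integral (\ref{2.1}) after the substitution $u\mapsto u-SD(\nu)$, $v\mapsto v-SD(\nu)$. Collecting the $z$-dependent part of the integrand into $e^{\phi(z)+uz/S}$ and the $w$-dependent part into $e^{-\phi(w)-vw/S}$, where
\begin{equation}
\phi(z)=-\frac{z^2}{2S}-D(\nu)z-\sum_{j=1}^n\log(z-\nu_j),
\notag
\end{equation}
one checks from the defining relation (\ref{2.4}) that $\phi'(\pm ib(\nu)S)=0$: this is precisely the purpose of $D(\nu)$, which cancels the otherwise non-vanishing part of $\sum_j(z-\nu_j)^{-1}$ at the two points $z=\pm ib(\nu)S$. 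Using (\ref{2.4}) once more one computes $\re\phi''(\pm ib(\nu)S)=-2A(\nu)/S$, so that $\pm ib(\nu)S$ are (in general non-degenerate) saddle points of $\phi$ with Gaussian width at most of order $\sqrt{S/A(\nu)}$; moreover the steepest-descent direction of $\phi$ and the steepest-descent direction of $-\phi$ at these points are orthogonal to one another (they are essentially the horizontal and the vertical direction when $A(\nu)$ is of order $1$).

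I would then deform the two contours so as to pass through $\pm ib(\nu)S$: replace the rectangle $\gamma_L$ by a closed ``lens'' contour that still encircles all the $\nu_j$, has $\pm ib(\nu)S$ as its two extreme points, and is tangent there to the steepest-descent direction of $\phi$; and push the vertical line $\Gamma_M$ leftwards onto a contour $\Gamma'$ through $\pm ib(\nu)S$ that is tangent there to the steepest-descent direction of $-\phi$. In sweeping $\Gamma_M$ onto $\Gamma'$ one crosses the pole of $1/(w-z)$ at $w=z$ for every $z$ on the arc of the lens that ends up on the far side of $\Gamma'$, and therefore
\begin{align*}
K_{n,S}^{\nu}(u-SD,v-SD)&=\frac1{2\pi i\,S}\int_{\text{arc}}e^{(u-v)z/S}\,dz\\
&\quad+\frac1{(2\pi i)^2S}\int_{\text{lens}}\int_{\Gamma'}\frac{e^{\phi(z)-\phi(w)}\,e^{(uz-vw)/S}}{w-z}\,dw\,dz .
\end{align*}
Since $e^{(u-v)z/S}$ is entire, Cauchy's theorem lets me replace the arc by the straight segment from $-ib(\nu)S$ to $ib(\nu)S$, and the elementary integral over that segment equals $2iS\sin(b(\nu)(u-v))/(u-v)$; hence, after dividing by $2\pi i\,S$, the first term is \emph{exactly} $\sin(b(\nu)(u-v))/(\pi(u-v))$, the orientations conspiring to produce the correct sign. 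Thus the sine kernel is produced by the residue, and Theorem \ref{thm2.1} reduces to bounding the remaining double integral by $C\,(SA(\nu))^{-1/2}e^{3u^2/A(\nu)}$.

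For the remaining double integral the crucial input is the estimate $\re\phi(z)\le\re\phi(ib(\nu)S)$ for every $z$ on the lens, with at least quadratic decay governed by $A(\nu)/S$ near the two tips, together with the mirror estimate $\re\phi(w)\ge\re\phi(ib(\nu)S)$ for every $w$ on $\Gamma'$, again with quadratic growth near the tips. Granting these, $|e^{\phi(z)-\phi(w)}|$ is dominated by a product of Gaussian factors localized near the tips $\pm ib(\nu)S$, of width $\sqrt{S/A(\nu)}$, and — the essential cancellation — the large common factor $e^{\pm\re\phi(ib(\nu)S)}$ drops out between $e^{\phi(z)}$ and $e^{-\phi(w)}$. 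The factor $1/|w-z|$ is only mildly singular, being integrable against such Gaussians; near a common tip $\re(uz/S)$ is linear in the local coordinate while $\re(vw/S)$ is essentially zero (the contour $\Gamma'$ has $\re w\approx0$ there), so completing the square in the local $z$-integral introduces the factor $e^{3u^2/A(\nu)}$, while the remaining integral of a Gaussian against $1/|w-z|$ contributes a factor of order $\sqrt{S/A(\nu)}$ which, combined with the prefactor $1/S$, gives $(SA(\nu))^{-1/2}$. The ``cross'' contributions, with $z$ near one tip and $w$ near the other, carry an extra $1/|w-z|\lesssim 1/(b(\nu)S)$ and are negligible, and the contributions with $z$ or $w$ away from the tips are exponentially small by the two estimates (the contours having only polynomial length).

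The step I expect to be the real obstacle is establishing $\re\phi(z)\le\re\phi(ib(\nu)S)$ along a suitable lens, and the companion estimate on $\Gamma'$, \emph{uniformly for every} $\nu\in B_{n,S}$. In \cite{JoUniv} the concentration-of-measure estimate from \cite{GuZe} gave uniform control of the empirical distribution of the $\nu_j$ and so permitted an essentially canonical contour; here only weak pointwise control is available, so the lens and $\Gamma'$ must be chosen by hand — hugging the real axis in the range where the $\nu_j$ accumulate and opening out to the tips $\pm ib(\nu)S$ — and the sign of $\re\phi(z)-\re\phi(ib(\nu)S)$ has to be extracted directly from the definitions (\ref{2.4})--(\ref{2.6}), with particular care in the regime where $A(\nu)$ is small and the saddle is nearly degenerate. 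Once the contours and these real-part inequalities are in place, the residue computation and the Gaussian estimates are routine.
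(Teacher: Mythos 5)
Your outline captures the correct skeleton — the saddle points sit at $\pm ib(\nu)S$, the relation (\ref{2.4}) forces $\phi'(\pm ib(\nu)S)=0$ and $\re\phi''(\pm ib(\nu)S)=-2A(\nu)/S$, and the sine kernel comes out as the residue from sweeping the $w$-contour past the $z$-contour followed by Cauchy's theorem on the entire function $e^{(u-v)z/S}$. These computations are right and match the paper's reduction to bounding a remainder $\tilde K^\nu_{n,S}$.

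The genuine gap is the one you flag yourself and then wave past: you do not produce contours on which $\re\phi(z)\le\re\phi(ib(\nu)S)$ and $\re\phi(w)\ge\re\phi(ib(\nu)S)$ hold with quadratic decay \emph{uniformly in $\nu\in B_{n,S}$}, and the contour you sketch (a lens ``hugging the real axis where the $\nu_j$ accumulate'') is not the right shape and would not give a bound expressed purely in terms of $A(\nu)$ and $b(\nu)$. The point of the theorem is precisely that the error is controlled by these two summary quantities and nothing else about the configuration $\{\nu_j\}$; a contour chosen ad hoc near the real axis reintroduces the individual $\nu_j$ into the estimate. The paper's device is more specific than steepest descent: writing $z(t)=x(t)+iy(t)$, one computes $\frac{d}{dt}\re f(z(t))$ as a sum over $j$ and demands that the numerator, as a polynomial in $\nu_j$, have its $\nu_j$-coefficient \emph{constant along the curve}. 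That requirement integrates to the conservation law $x\bigl(-\tfrac13x^2+y^2-b^2\bigr)=\mathrm{const}$, whose branches through $\pm ib$ are the vertical line $z=i(t\pm b)$ (used for $\Gamma$) and the hyperbolas $z=t\pm i\sqrt{t^2/3+b^2}$ (used for $\gamma$). On these curves $\frac{d}{dt}\re f$ has an explicit sign and yields exactly $-\tfrac16 A(\nu)t^2$-type bounds (lemma \ref{lem2.7}), from which the $(SA(\nu))^{-1/2}e^{3u^2/SA(\nu)}$ estimate follows by a Gaussian integral. Without this identification you have no mechanism for the uniformity you need.

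A secondary inaccuracy: you describe the descent directions of $\phi$ and $-\phi$ at $\pm ib(\nu)S$ as ``essentially the horizontal and vertical directions,'' but $\im\phi''(ib(\nu)S)=\sum_j 2bS\nu_j/(\nu_j^2+b^2S^2)^2$ is not zero unless the $\nu_j$ are symmetric, so the saddle is generically tilted. This is one more reason steepest-descent heuristics do not by themselves deliver the required uniform estimate; the conservation-law contours are what sidestep that dependence.
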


We postpone the proof to section \ref{sect2.3}.
The proof is based, as can be expected, on an asymptotic analysis of the
integral formula (\ref{2.1}). The important point is that the analysis can be done in
such a way that the dependence on $\nu$ only enters in a few quantities, $b(\nu)$, $D(\nu)$ and
$A(\nu)$.

Assume now that we have a probability measure $\mathbb{P}_{\nu}$ with expectation 
$\mathbb{E}_{\nu}$ on the point configurations $\nu=\{\nu_j\}$. We can then define a point 
process $\mu=\{\mu_j\}_{j=1}^n$ on $\mathbb{R}$ depending on $S$ by
\begin{equation}\label{2.8}
\mathbb{E}_{n,S}\left[\prod_{j=1}^n(1-\phi(\mu_j))\right]=\mathbb{E}_{\nu}\left[\mathbb{E}^{\nu}
\left[\prod_{j=1}^n(1-\phi(x_j(S)))\right]\right]
\end{equation}
for every $\phi\in C_c(\mathbb{R})$ with $0\le\phi\le 1$.

We now have the following theorem on convergence to the sine kernel point process defined by
(\ref{1.4}).
\begin{theorem}\label{thm2.2}
Let $\alpha_n$, $\beta_n$, $\delta_n$, $\omega_n$ and $S_n$ be 
sequences such that $S_n>0$, $\omega_n\to\infty$, $\omega_n/\log (S_n\alpha_n)\to 0$ and 
$\beta_n\to\beta>0$ as $n\to\infty$. Define
\begin{equation}\label{2.11}
C_n=\{\nu\in B_{n,S_n}\,;\,A(\nu)\ge\alpha_n,|b(\nu)-\beta_n|\le1/\omega_n,
|D(\nu)-\delta_n|\le\sqrt{\omega_n\alpha_n/S_n}\}.
\end{equation}
Assume that
\begin{equation}\label{2.12}
\lim_{n\to\infty}\mathbb{P}_{\nu}[C_n]=1.
\end{equation}
Then,
\begin{equation}\label{2.13}
\lim_{n\to\infty}\mathbb{E}_{n,S_n}\left[\exp(-\sum_{j=1}^n\psi(\mu_j+S_n\delta_n))\right]=
\mathbb{E}^\beta_{\text{sine}}\left[\exp(\sum_j\psi(\mu_j))\right]
\end{equation}
for every $\psi\in C^+_c(\mathbb{R})$.
\end{theorem}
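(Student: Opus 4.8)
The plan is to combine the pointwise kernel approximation of Theorem~\ref{thm2.1} with the hypothesis that the random configuration $\nu$ lands in $C_n$ with probability tending to one. First I would write, for a fixed $\psi\in C^+_c(\mathbb{R})$ with $\phi=1-e^{-\psi}$ supported in some interval $[-T,T]$,
\begin{equation}\label{2.pf1}
\mathbb{E}_{n,S_n}\left[\exp(-\sum_j\psi(\mu_j+S_n\delta_n))\right]
=\mathbb{E}_{\nu}\left[\det(I-\phi_n^{1/2}K_{n,S_n}^{\nu}\phi_n^{1/2})\right],
\end{equation}
where $\phi_n(x)=\phi(x-S_n\delta_n)$, using \eqref{2.2} and \eqref{2.8}. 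On the event $C_n$ I would shift the kernel by $S_n D(\nu)$: since $|D(\nu)-\delta_n|\le\sqrt{\omega_n\alpha_n/S_n}$ the translated test function $\phi_n(\cdot+S_n D(\nu))$ is supported in $[-T',T']$ with $T'=T+S_n\sqrt{\omega_n\alpha_n/S_n}=T+\sqrt{\omega_n\alpha_n S_n}$. The point of Theorem~\ref{thm2.1} is that after this recentring the kernel is uniformly close to $K^{b(\nu)}_{\text{sine}}$ with an error controlled by $A(\nu)$, and on $C_n$ we have $A(\nu)\ge\alpha_n$, so on the support of the (translated) test function the error bound $\tfrac{C}{\sqrt{S_nA(\nu)}}e^{3u^2/A(\nu)}$ is at most $\tfrac{C}{\sqrt{S_n\alpha_n}}e^{3(T')^2/\alpha_n}$, which I would estimate. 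The balancing is exactly what the growth condition $\omega_n/\log(S_n\alpha_n)\to0$ is for: $e^{3(T')^2/\alpha_n}\approx e^{O(\omega_n S_n)}$ times an $S_n$-independent factor is dominated by $(S_n\alpha_n)^{1/2}$ once we track the exponents carefully — more precisely one should check that $\sqrt{\omega_n\alpha_n S_n}$ scaled by itself gives $(T')^2/\alpha_n = O(\omega_n S_n)$... here I would need to re-examine the precise exponents; the intended mechanism is that $\log$ of the error bound is $-\tfrac12\log(S_n\alpha_n)+O(\omega_n)$, which $\to-\infty$.

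Next, I would translate this trace-norm (or entrywise, on the bounded support) closeness of kernels into closeness of Fredholm determinants. Since $\phi_n^{1/2}$ is bounded by $1$ and supported in a bounded interval, $\|\phi_n^{1/2}(K_{n,S_n}^\nu-K^{b(\nu)}_{\text{sine}})\phi_n^{1/2}\|_{\mathcal{J}_1}$ is bounded by the interval length times the sup-norm of the kernel difference, i.e. by $2T'\cdot\tfrac{C}{\sqrt{S_n\alpha_n}}e^{3(T')^2/\alpha_n}$, and by the Lipschitz continuity of Fredholm determinants on trace class ($|\det(I-A)-\det(I-B)|\le\|A-B\|_{\mathcal{J}_1}e^{1+\|A\|_{\mathcal{J}_1}+\|B\|_{\mathcal{J}_1}}$), together with the a priori bound that $\|\phi_n^{1/2}K^{b(\nu)}_{\text{sine}}\phi_n^{1/2}\|_{\mathcal{J}_1}\le 2T'\cdot(b(\nu)/\pi)$, this difference tends to $0$ uniformly over $\nu\in C_n$ — again provided the growth condition makes $T'\cdot(S_n\alpha_n)^{-1/2}e^{3(T')^2/\alpha_n}\to0$. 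I would also use translation invariance of $K^b_{\text{sine}}$ so that $\det(I-\phi_n^{1/2}(\cdot)^{\text{shifted}}K^{b(\nu)}_{\text{sine}}(\cdot)K^{b(\nu)}_{\text{sine}})$ equals $\det(I-\phi^{1/2}K^{b(\nu)}_{\text{sine}}\phi^{1/2})$ with the original $\phi$, removing the $S_n D(\nu)$ shift entirely from the sine-kernel side.

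Then I would handle the $b(\nu)$-dependence: on $C_n$, $|b(\nu)-\beta_n|\le1/\omega_n\to0$ and $\beta_n\to\beta$, so $b(\nu)\to\beta$ uniformly on $C_n$; since $b\mapsto\det(I-\phi^{1/2}K^b_{\text{sine}}\phi^{1/2})$ is continuous (indeed the kernel depends continuously on $b$ in trace norm on a fixed bounded interval), $\det(I-\phi^{1/2}K^{b(\nu)}_{\text{sine}}\phi^{1/2})\to\det(I-\phi^{1/2}K^\beta_{\text{sine}}\phi^{1/2})=\mathbb{E}^\beta_{\text{sine}}[\exp(-\sum_j\psi(\mu_j))]$ uniformly over $\nu\in C_n$. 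Finally I would assemble: split the expectation in \eqref{2.pf1} over $C_n$ and its complement; on $C_n^c$ the integrand is a Fredholm determinant of the form $\det(I-\text{positive operator with norm}\le1)\in[0,1]$, so that contribution is at most $\mathbb{P}_\nu[C_n^c]\to0$ by \eqref{2.12}; on $C_n$ the integrand converges uniformly to the constant $\mathbb{E}^\beta_{\text{sine}}[\exp(-\sum_j\psi(\mu_j))]$, and $\mathbb{P}_\nu[C_n]\to1$. Taking $n\to\infty$ yields \eqref{2.13}. I expect the main obstacle to be the bookkeeping in the first paragraph: verifying that the Gaussian-type error factor $e^{3(T')^2/\alpha_n}$ in Theorem~\ref{thm2.1}, with the support radius $T'$ having itself grown by $\sqrt{\omega_n\alpha_n S_n}$ because of the uncertainty in the centring $D(\nu)$, is still beaten by the decaying prefactor $(S_nA(\nu))^{-1/2}$; this is precisely where the hypotheses $A(\nu)\ge\alpha_n$ and $\omega_n/\log(S_n\alpha_n)\to0$ must be used in exactly the right combination, and getting the exponents to line up is the delicate part of the argument.
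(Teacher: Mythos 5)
Your overall strategy --- recentre the kernel by $S_nD(\nu)$, invoke Theorem~\ref{thm2.1} on the event $C_n$, translate the uniform-on-the-support kernel estimate into closeness of Fredholm determinants, absorb the $b(\nu)\to\beta$ drift by continuity of $b\mapsto K^b_{\text{sine}}$ on compacts, and dispose of $C_n^c$ by the trivial bound $\det(I-\phi_n^{1/2}K_{n,S_n}^\nu\phi_n^{1/2})\in[0,1]$ together with (\ref{2.12}) --- is exactly the paper's argument. The exponent bookkeeping you flag as uncertain turns out to be a typo in (\ref{2.7}): as printed it has $e^{3u^2/A(\nu)}$, which as you note cannot balance; the bound actually proved in section~\ref{sect2.3} is (\ref{2.37}), namely $e^{3u^2/SA(\nu)}$, and this is the form used in (\ref{2.15}). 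With that correction your sanity check works out precisely as you predicted: on the support of the recentred test function one has $|u|\le 2\sqrt{\omega_n\alpha_n S_n}$ for large $n$ (from $|D(\nu)-\delta_n|\le\sqrt{\omega_n\alpha_n/S_n}$), whence $u^2/(S_nA(\nu))\le 4\omega_n$ (using $A(\nu)\ge\alpha_n$), and $C(S_n\alpha_n)^{-1/2}e^{C\omega_n}\to 0$ since $\omega_n=o(\log(S_n\alpha_n))$.

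The genuine gap is in passing from the pointwise kernel estimate to an operator-norm estimate strong enough for determinants. You claim that $||\phi_n^{1/2}(K_{n,S_n}^{\nu}-K^{b(\nu)}_{\text{sine}})\phi_n^{1/2}||_1$ is bounded by the support length times the sup norm of the kernel difference; that inequality is false in general. A pointwise bound $|A(x,y)-B(x,y)|\le\epsilon\,\phi_n^{1/2}(x)\phi_n^{1/2}(y)$ gives the Hilbert--Schmidt bound $||A-B||_2\le\epsilon\int\phi_n$, not a trace-norm bound, and the difference is not sign-definite so no positivity argument rescues $||\cdot||_1$. The paper circumvents this via Lemma~\ref{lem2.3}, which uses the regularized determinant $\det_2$ to bound $|\det(I-A)-\det(I-B)|$ in terms of $||A-B||_2$ and $\Tr A-\Tr B$; both are controlled by the pointwise bound, the second because $|\Tr A-\Tr B|=|\int(A(x,x)-B(x,x))\,dx|\le\epsilon\int\phi_n$. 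You should replace your trace-norm Lipschitz estimate by this two-parameter one. A further small correction: the relevant support length is $2T=|\text{supp}\,\phi|$, not your $2T'$; the translation by $S_n(D(\nu)-\delta_n)$ moves $\text{supp}\,\phi_n$ but does not widen it, so $\int\phi_n\le 2T$ and hence $||\phi_n^{1/2}K^{b(\nu)}_{\text{sine}}\phi_n^{1/2}||_2\le 2T\,b(\nu)/\pi$ is bounded uniformly in $n$, which is needed so that the exponential prefactors in Lemma~\ref{lem2.3} do not blow up.
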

\begin{proof}
It is clear from (\ref{2.12}) and (\ref{2.8}) that it is enough to prove that
\begin{equation}\label{2.14}
\lim_{n\to\infty}\mathbb{E}_{\nu}\left[1_{C_n}\mathbb{E}^{\nu}
\left[\exp(-\sum_{j=1}^n\psi(x_j(S_n)+S_n\delta_n))\right]\right]=
\mathbb{E}^\beta_{\text{sine}}\left[\exp(\sum_j\psi(\mu_j))\right].
\end{equation}
Here $1_A$ denotes the indicator function for the event $A$.
Write $\phi=1-e^{-\psi}$. Consider a fixed $\nu\in C_n$ and write
\begin{equation}
L_n^{\nu}(u,v)=K_{n,S_n}^{\nu}(u-S_nD(\nu),v-S_nD(\nu))
\notag
\end{equation}
and
\begin{equation}
\phi_n(u)=\phi(u+S_n\delta_n-S_nD(\nu)).
\notag
\end{equation}
It follows from (\ref{2.7}) that 
\begin{align}\label{2.15}
&|\phi_n^{1/2}(u)L_n^{\nu}(u,v)\phi_n^{1/2}(v)-\phi_n^{1/2}(u)K^{b(\nu)}_{\text{sine}}
\phi_n^{1/2}(v)|
\notag\\
&\le\frac{C}{\sqrt{S_nA(\nu)}}e^{Cu^2/S_nA(\nu)}\phi_n^{1/2}(u)\phi_n^{1/2}(v).
\end{align}
There is a constant $C$ such that $\phi_n(u)=0$ if $|u|\ge S_n|D(\nu)-\delta_n|+C$. Hence, 
$\phi_n(u)=0$ if $|u|\ge 2\sqrt{\omega_n\alpha_nS_n}$ for $n$ large since $\nu\in C_n$. If
$|u|\le 2\sqrt{\omega_n\alpha_nS_n}$, then
\begin{equation}
\frac{C}{\sqrt{S_nA(\nu)}}e^{Cu^2/S_nA(\nu)}\le \frac{C}{\sqrt{S_nA(\nu)}}e^{C\omega_n}
\le\frac{C}{(S_n\alpha_n)^{1/4}}
\notag
\end{equation}
for $n$ large, since $\omega_n/\log(S_n\alpha_n)\to 0$ as $n\to\infty$. Thus, by (\ref{2.15})
\begin{equation}
|\phi_n^{1/2}(u)L_n^{\nu}(u,v)\phi_n^{1/2}(v)-\phi_n^{1/2}(u)K^{b(\nu)}_{\text{sine}}
\phi_n^{1/2}(v)|\le
\frac{C}{(S_n\alpha_n)^{1/4}}\phi_n^{1/2}(u)\phi_n^{1/2}(v)
\notag
\end{equation}
for all $u,v$.
For a given $\epsilon>0$ we thus have
\begin{equation}\label{2.17}
|\phi_n^{1/2}(u)L_n^{\nu}(u,v)\phi_n^{1/2}(v)-\phi_n^{1/2}(u)K^{\beta}_{\text{sine}}
\phi_n^{1/2}(v)|\le
\epsilon\phi_n^{1/2}(u)\phi_n^{1/2}(v)
\end{equation}
for all sufficiently large $n$ uniformly in $\nu\in C_n$, since $|b(\nu)-\beta_n|\le 1/\omega_n$
and $\beta_n\to\beta$ as $n\to\infty$.

If $A$ is an operator on $L^2(\mathbb{R})$ with integral kernel $A(x,y)$ then the Hilbert-Schmidt
norm of $A$ is give by $||A||_2^2=\int_{\mathbb{R}^2}|A(x,y)|^2dxdy$. We now use the following lemma.

\begin{lemma}\label{lem2.3} If $A$ and $B$ are trace class operators on $L^2(\mathbb{R})$ then
\begin{align}\label{2.18}
&|\det(I-A)-\det(I-B)|
\notag\\
&\le ||A-B||_2 e^{-\Tr A+(||A-B||_2+2||B||_2+1)^2/2}+
e^{(||B||_2+1)^2/2-\Tr B}(e^{-(\Tr A-\Tr B)}-1).
\end{align}
\end{lemma}

The lemma is proved in section \ref{sect3.4}.

It follows from (\ref{2.2}) and a translation of variables that
\begin{equation}\label{2.18:2}
\mathbb{E}^{\nu}\left[\exp(-\sum_{j=1}\psi(x_j(S_n)+S_n\delta_n))\right]=
\det(I-\phi_n^{1/2}L_n^\nu\phi_n^{1/2}).
\end{equation}
Using (\ref{2.17}), (\ref{2.18}) and the fact that the sine kernel is translation invariant
it is now straightforward to see that
\begin{equation}
|\det(I-\phi_n^{1/2}L_n^{\nu}\phi_n^{1/2})-\det(I-\phi^{1/2}K^{\beta}_{\text{sine}}\phi^{1/2})|\to 0
\notag
\end{equation}
uniformly for $\nu\in C_n$ as $n\to\infty$ . This completes the proof by (\ref{2.14}) and 
(\ref{2.18:2}).
\end{proof}

\subsection{Proof of bulk universality}\label{sect2.2}
In this section we will prove theorem \ref{thm1.1} on bulk universality for Gaussian divisible 
Hermitian Wigner matrices with finite second moment using the convergence therem \ref{thm2.2}.
Define
\begin{equation}\label{2.20}
m_n(z)=\frac 1n\sum_{j=1}^n\frac 1{y_j-z}=\frac 1n\Tr(X/\sqrt{n}-z)^{-1}
\end{equation}
for $\im z\neq 0$. Then
\begin{equation}\label{2.21}
\mathbb{E}_X[m_n(z)]\to m(z)=-2z+\sqrt{z^2-1}
\end{equation}
as $n\to\infty$ (convergence to the semi-circle law), for each $z\in\mathbb{C}$ with
$\im z\neq 0$. Let $\delta+\beta i$, $\beta>0$, be given by
\begin{equation}\label{2.22}
m(d+\kappa(\delta+\beta i))=\delta+\beta i,
\end{equation}
which gives
\begin{equation}\label{2.23}
\delta=-\frac{2d}{1+4\kappa},\quad \beta=\frac 2{1+4\kappa}\sqrt{1+4\kappa-d^2}.
\end{equation}

\begin{lemma}\label{lem2.4} There is a sequence $\delta_n+\beta_n i$, $\beta_n>0$,
such that
\begin{equation}\label{2.24}
\mathbb{E}_X[m_n(d_n/n+\kappa(\delta_n+\beta_n i))]=\delta_n+\beta_n i
\end{equation}
and $\delta_n+\beta_n i\to \delta+\beta i$ as $n\to\infty$.
\end{lemma}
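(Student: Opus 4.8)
The plan is to view the equation $\mathbb{E}_X[m_n(d_n/n + \kappa z)] = z$ as a fixed-point problem for $z = \delta_n + \beta_n i$ in the upper half-plane and to solve it by a continuity/degree argument, then pass to the limit using (\ref{2.21}). First I would set $f_n(z) = \mathbb{E}_X[m_n(d_n/n + \kappa z)]$ and $f(z) = m(d + \kappa z)$, both defined and holomorphic in $z$ on $\{\im z > 0\}$ (here one uses that $\im z > 0$ forces $\im(d_n/n + \kappa z) > 0$, so $m_n$ is evaluated off the real axis and the expectation is finite). The point $z_\star = \delta + \beta i$ defined by (\ref{2.22})–(\ref{2.23}) is by construction the unique fixed point of $f$ in the upper half-plane that corresponds to the semicircle law, and one checks directly from the explicit formula $m(w) = -2w + \sqrt{w^2-1}$ that $z \mapsto f(z) - z$ has a simple zero there with nonzero derivative; equivalently, $f$ is a strict contraction near $z_\star$ in the hyperbolic metric, since $f$ maps the upper half-plane into itself and is not an automorphism.

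The key steps, in order: (1) Establish that $f_n \to f$ uniformly on compact subsets of the open upper half-plane. This follows from (\ref{2.21}) — pointwise convergence of $\mathbb{E}_X[m_n(w)]$ to $m(w)$ for each $w$ with $\im w \neq 0$ — upgraded to local uniform convergence by a normal-families argument: the functions $w \mapsto \mathbb{E}_X[m_n(w)]$ are uniformly bounded on compact subsets of $\{\im w \ge \eta\}$ (each $m_n$ satisfies $|m_n(w)| \le 1/\im w$, hence so does its expectation), so Montel's theorem plus pointwise convergence gives uniform convergence on compacts; composing with the affine map $z \mapsto d_n/n + \kappa z$ (and using $d_n/n \to d$) transfers this to $f_n \to f$. (2) Fix a small closed disc $\bar{B}(z_\star, r)$ inside the upper half-plane on which $g(z) := f(z) - z$ has $z_\star$ as its only zero and $|g| \ge c > 0$ on the boundary circle. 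For $n$ large, $|f_n - f| < c$ on that circle, so by Rouché's theorem $g_n(z) := f_n(z) - z$ has exactly one zero $z_n$ in $B(z_\star, r)$. (3) Since $r$ was arbitrary, the zeros $z_n$ converge to $z_\star$; writing $z_n = \delta_n + \beta_n i$ (with $\beta_n > 0$ automatic for $n$ large because $z_n \to z_\star = \delta + \beta i$ and $\beta > 0$) gives both (\ref{2.24}) and $\delta_n + \beta_n i \to \delta + \beta i$.

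The main obstacle is step (1): making sure $\mathbb{E}_X[m_n(w)]$ really is finite and well-behaved under only a second-moment assumption. This is not entirely free — one must justify that $m_n(w)$, though bounded by $1/\im w$ pointwise in the sample, has a finite expectation (immediate from the deterministic bound) and that the limit relation (\ref{2.21}), which the paper quotes, is available in this generality; I would cite the pointwise convergence result as stated and only supply the normal-families packaging. A secondary technical point is verifying that $g = f - \mathrm{id}$ has an isolated simple zero at $z_\star$ rather than, say, $f$ being tangent to the identity there; this is a short explicit computation with $m(w) = -2w+\sqrt{w^2-1}$, differentiating the relation $m(d+\kappa z_\star) = z_\star$ and checking $1 - \kappa m'(d+\kappa z_\star) \neq 0$, which holds because $|\kappa m'| = |\kappa||{-2 + w/\sqrt{w^2-1}}| < 1$ at the relevant point (this is exactly the contraction property). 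Once that is in hand, Rouché does the rest and no delicate estimates on $X$ beyond (\ref{2.21}) are needed.
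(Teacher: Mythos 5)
Your proposal is correct and follows essentially the same route as the paper: use (\ref{2.21}), the deterministic bound $|m_n(w)|\le 1/\im w$, and Montel's theorem to upgrade pointwise convergence to locally uniform convergence of $z\mapsto\mathbb{E}_X[m_n(d_n/n+\kappa z)]-z$, then locate zeros near $\delta+\beta i$ by a counting argument (the paper invokes Hurwitz's theorem, you invoke Rouché's theorem, but the former is proved by the latter and they are interchangeable here). The only place you do more than needed is the verification that the zero of $g=f-\mathrm{id}$ at $z_\star$ is simple: Hurwitz (or a careful Rouché argument) gives that the zeros of $g_n$ accumulate at $z_\star$ regardless of multiplicity, so the Schwarz--Pick/contraction computation, while correct, is not required for the lemma.
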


\begin{proof}
Define $g_n(z)=\mathbb{E}_X[m_n(d_n/n+\kappa z)]-z$. Then $g_n$ is analytic in
$\im z>0$. Since
\begin{equation}
|m_n(d_n/n+\kappa z)-m_n(d+\kappa z)|\le\frac{|d_n-d|}{(\kappa \im z)^2}
\notag
\end{equation}
and $d_n/n\to d$ as $n\to\infty$, it follows from (\ref{2.21}) that
$g_n(z)\to g(z)=m(d+\kappa z)-z$ uniformly on compact subsets of $\im z>0$ as $n\to\infty$
(by Montel's therem).
Since $g(\delta +\beta i)=0$ by (\ref{2.22}) it follows by Hurwitz' theorem that there 
is a sequence $\delta_n+\beta_n i$ such that $g_n(\delta_n+\beta_n i)=0$ and 
$\delta_n+\beta_n i\to\delta +\beta i$.
\end{proof}

Set
\begin{equation}\label{2.25}
c_n=d_n/n+\delta_n,\quad \nu_j=y_j-c_nS_n.
\end{equation}

The probability measure on $X$ induces a probabilty measure on $\nu=\{\nu_j\}$ 
that we denote by $\mathbb{P}_{\nu}$. Now, using (\ref{2.1}), we see that
\begin{equation}
K^{y}_{n,S_n}(u+c_nS_n,v+c_nS_n)=e^{((u+c_nS_n)^2-(v+c_nS_n)^2+v^2-u^2)/2S_n}
K^{\nu}_{n,S_n}(u,v)
\notag
\end{equation}
and from this it follows that
\begin{equation}
\mathbb{E}^{y}\left[\exp(-\sum_{j=1}^n\psi(x_j(S_n)-d_n))\right]=
\mathbb{E}^{\nu}\left[\exp(-\sum_{j=1}^n\psi(x_j(S_n)+\delta_nS_n))\right]
\notag
\end{equation}
Hence,
\begin{equation}\label{2.26}
\mathbb{E}_W\left[\exp(-\sum_{j=1}^n\psi(\lambda_j-d_n))\right]=
\mathbb{E}_{\nu}\left[\mathbb{E}^{\nu}\left[\exp(-\sum_{j=1}^n\psi(x_j(S_n)+S_n\delta_n))\right]\right].
\end{equation}
Choose $\alpha_n=\alpha>0$ fixed, to be specified below, $\beta_n$ and $\delta_n$ as in 
lemma \ref{lem2.4}, $S_n=\kappa n$ and $\omega_n=\sqrt{\log n}$. Then theorem \ref{thm1.1}
follows if we can show that $\mathbb{P}_{\nu}[C_n]\to 1$ as $n\to\infty$ with $C_n$ as 
in (\ref{2.11}).

To prove this we will use

\begin{lemma}\label{lem2.5}
For each $z\in\mathbb{C}$ with $\im z\neq 0$ we have the estimate
\begin{equation}\label{2.27}
\mathbb{E}_X[|m_n(z)-\mathbb{E}_X[m_n(z)]|^2\le\frac{2}{n|\im z|^2}.
\end{equation}
\end{lemma}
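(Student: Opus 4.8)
The plan is to prove Lemma \ref{lem2.5} by the standard martingale-difference decomposition together with the resolvent rank-one perturbation identity. Write $z=x+iy$ with $y=\im z\neq 0$, and set $A=X/\sqrt n$ so that $m_n(z)=\frac1n\Tr(A-z)^{-1}$. Let $\mathbb{E}_k$ denote conditional expectation given the first $k$ columns (and rows, by Hermiticity) of $X$, so that $\mathbb{E}_0=\mathbb{E}_X$ and $\mathbb{E}_n$ is the identity. Then
\begin{equation}
m_n(z)-\mathbb{E}_X[m_n(z)]=\sum_{k=1}^n\gamma_k,\qquad \gamma_k=(\mathbb{E}_k-\mathbb{E}_{k-1})m_n(z).
\notag
\end{equation}
The $\gamma_k$ form a martingale-difference sequence, so $\mathbb{E}_X|m_n(z)-\mathbb{E}_X m_n(z)|^2=\sum_{k=1}^n\mathbb{E}_X|\gamma_k|^2$ by orthogonality, and it remains to bound each term.

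The key step is to show $|\gamma_k|\le 2/(n|y|)$ deterministically; the factor-of-two saving comes from the Hermitian (complex) symmetry. Let $A_k$ be the matrix obtained from $A$ by deleting the $k$-th row and column (equivalently, setting them to zero), and let $(A_k-z)^{-1}$ act on the corresponding $(n-1)$-dimensional subspace. Since $A$ and $A_k$ differ by a matrix of rank at most $2$ (a row plus a column), the resolvent identity gives that $\Tr(A-z)^{-1}-\Tr(A_k-z)^{-1}$ is controlled: indeed, one has the exact formula
\begin{equation}
\Tr(A-z)^{-1}-\Tr(A_k-z)^{-1}=\frac{1+a_k^*(A_k-z)^{-2}a_k}{a_{kk}-z-a_k^*(A_k-z)^{-1}a_k},
\notag
\end{equation}
where $a_{kk}$ is the $(k,k)$ entry of $A$ and $a_k$ the $k$-th column of $A$ with the $k$-th entry removed. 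Because $\mathbb{E}_k$ and $\mathbb{E}_{k-1}$ agree on $A_k$ (which does not involve the $k$-th column), $\gamma_k=(\mathbb{E}_k-\mathbb{E}_{k-1})\big[\frac1n(\Tr(A-z)^{-1}-\Tr(A_k-z)^{-1})\big]$. The scalar quantity $\Tr(A-z)^{-1}-\Tr(A_k-z)^{-1}$ is, up to sign, the logarithmic derivative in $z$ of the ratio $\det(A-z)/\det(A_k-z)$, and by interlacing of the eigenvalues of $A$ and $A_k$ it can be written as $\sum_j \big(\frac{1}{\mu_j-z}-\frac{1}{\eta_j-z}\big)$ for interlacing reals $\mu_j,\eta_j$, whence its imaginary part is bounded by $1/|y|$ in absolute value — in fact the whole expression has modulus at most $1/|y|$ because it is a difference of two Herglotz-type functions whose combined total variation is $1$. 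Applying $|(\mathbb{E}_k-\mathbb{E}_{k-1})(\cdot)|\le 2\sup|\cdot|$ yields $|\gamma_k|\le 2/(n|y|)$... but this costs a factor $4$, not $2$; the sharper route is to note $(\mathbb{E}_k-\mathbb{E}_{k-1})\xi=\mathbb{E}_k(\xi-\mathbb{E}_{k-1}\xi)$ and bound $|\gamma_k|^2\le \mathbb{E}_k|\tfrac1n(\Tr(A-z)^{-1}-\Tr(A_k-z)^{-1})|^2$, then use that this trace difference is bounded by $1/(n|y|)$ so each $\mathbb{E}_X|\gamma_k|^2\le 1/(n^2|y|^2)$ — wait, that gives $\sum_k\le 1/(n|y|^2)$. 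Summing over $k$ produces $\sum_{k=1}^n\mathbb{E}_X|\gamma_k|^2\le 2/(n|y|^2)$ after tracking the constant carefully via the Hermitian bound $|\Tr(A-z)^{-1}-\Tr(A_k-z)^{-1}|\le 1/|y|$ and $\mathbb{E}_X|\gamma_k|^2\le \mathbb{E}_X|\mathbb{E}_{k-1}[\,\cdot\,]-\mathbb{E}_k[\,\cdot\,]|^2 \le \mathbb{E}_X|\,\cdot\,|^2$ with the conditioning argument. This is exactly the content of (\ref{2.27}).

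The main obstacle is getting the constant right — that is, proving the clean bound $|\Tr(A-z)^{-1}-\Tr(A_k-z)^{-1}|\le 1/|\im z|$ and arranging the martingale step so that one does not lose more than a factor of $2$ overall. The cleanest way to see the trace bound is the interlacing identity: by the Cauchy interlacing theorem the eigenvalues $\eta_1\le\cdots\le\eta_{n-1}$ of $A_k$ interlace the eigenvalues $\mu_1\le\cdots\le\mu_n$ of $A$, and one has the telescoping representation
\begin{equation}
\Tr(A-z)^{-1}-\Tr(A_k-z)^{-1}=\frac{d}{dz}\log\frac{\det(A-z)}{\det(A_k-z)}=\sum_{j}\Big(\frac{1}{z-\mu_j}-\frac{1}{z-\eta_j}\Big)+\frac{1}{z-\mu_n},
\notag
\end{equation}
so that $\big|\im\big(\Tr(A-z)^{-1}-\Tr(A_k-z)^{-1}\big)\big| = \big|\sum_j\big(\frac{y}{|z-\mu_j|^2}-\frac{y}{|z-\eta_j|^2}\big)+\frac{y}{|z-\mu_n|^2}\big|$; because of interlacing the bracketed sum has the same sign as $-y$ termwise after pairing, and the total is bounded by $|y|/y^2\cdot 1 = 1/|y|$ — more simply, $f(z):=\det(A-z)/\det(A_k-z)$ maps the upper half-plane to itself (it is a ratio of polynomials with interlacing real zeros), so $g:=f'/f=(\log f)'$ satisfies $\im g \le 0$ there and $|g(z)|$ restricted to its imaginary part is controlled by the number of poles, giving $\le 1/|y|$. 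I would then assemble the pieces: martingale orthogonality, the per-term bound, and summation, which is routine once the constant is secured.
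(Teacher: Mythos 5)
Your overall architecture matches the paper's: the martingale-difference decomposition along columns, the reduction to the deterministic bound $|\Tr(A-z)^{-1}-\Tr(A_k-z)^{-1}|\le 1/|\im z|$, and summation. The difference is how you prove that trace-difference bound. The paper proves it from the Schur-complement identity
$\Tr D-\Tr D_k=\frac{1+n^{-1}\alpha_k^*D_k^2\alpha_k}{-\beta_k}$
combined with the two elementary facts $\im\beta_k\ge v\,(1+n^{-1}\alpha_k^*D_kD_k^*\alpha_k)$ and $|1+n^{-1}\alpha_k^*D_k^2\alpha_k|\le 1+n^{-1}\alpha_k^*D_kD_k^*\alpha_k$, giving the constant $1$ immediately. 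You instead try to get it from interlacing, and your version of that argument does not close.

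Two concrete problems. First, the termwise claim is false: in $\sum_j\bigl(\frac{y}{|z-\mu_j|^2}-\frac{y}{|z-\eta_j|^2}\bigr)$ the individual differences do \emph{not} all have the same sign, since $t\mapsto y/|z-t|^2$ is unimodal, not monotone; a bare bounded-variation/interlacing estimate gives only a constant like $2/|\im z|$, not $1/|\im z|$. Second, you have the Herglotz function backwards: $\det(A-z)/\det(A_k-z)\sim -z$ as $z\to\infty$ and maps $\mathbb{H}$ to the \emph{lower} half-plane. The function that is Herglotz is its reciprocal, $g(z)=\det(A_k-z)/\det(A-z)=\bigl((A-z)^{-1}\bigr)_{kk}=\int\frac{d\mu(t)}{t-z}$ with $\mu$ a probability measure, and then $\Tr(A-z)^{-1}-\Tr(A_k-z)^{-1}=g'/g$ with $|g'|\le\int\frac{d\mu}{|t-z|^2}$ and $|g|\ge\im g=|\im z|\int\frac{d\mu}{|t-z|^2}$, yielding $|g'/g|\le 1/|\im z|$. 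With that correction the Herglotz route is a perfectly good alternative to the paper's Schur-complement computation; as written, though, the key step is wrong. Finally, your martingale bookkeeping wanders: the clean step is $\mathbb{E}|\gamma_k|^2\le\mathbb{E}\bigl|\tfrac1n(\Tr D-\Tr D_k)\bigr|^2\le 1/(n^2v^2)$ (since $\gamma_k$ is a difference of two nested conditional expectations of $Y_k:=\tfrac1n(\Tr D-\Tr D_k)$, its second moment is dominated by that of $Y_k$), which actually gives $1/(nv^2)$, comfortably inside the stated $2/(nv^2)$; the digression about losing factors of $2$ or $4$ is unnecessary once this is noticed.
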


This is proved in \cite{Bai1}. For convenience we give the proof
in the appendix. 

Define
\begin{equation}
M_n(\tau)=m_n(\kappa c_n+\kappa\tau i)-\mathbb{E}_X[m_n(\kappa c_n+\kappa\tau i)].
\notag
\end{equation}
Note that, by (\ref{2.20}) and (\ref{2.25})
\begin{equation}\label{2.28}
m_n(\kappa c_n+z)=\frac 1n\sum_{j=1}^n\frac 1{\nu_j/n-z}.
\end{equation}
Set
\begin{equation}
V_n=\{\nu\,;\,\text{$|M_n(\tau)|\le\sqrt{\frac{\omega_n}n}$ for $\tau=\beta_n, \beta/2, 2\beta$
and $3\beta$}\}.
\notag
\end{equation}
The result we need now follows from

\begin{lemma}\label{lem2.6}
The following statements hold.
\begin{itemize}
\item[(i)] $\mathbb{P}_{\nu}[V_n]\to 1$ as $n\to\infty$.
\item[(ii)] There is an $\alpha>0$ such that if we choose $\alpha_n=\alpha$ and the other 
sequences as above, then $V_n\subseteq B_{n,S_n}$ and $V_n\cap B_{n,S_n}\subseteq C_n$, if $n$
is large enough.
\end{itemize}
\end{lemma}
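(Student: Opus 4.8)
The plan is to prove Lemma~\ref{lem2.6} by first translating the concentration estimate of Lemma~\ref{lem2.5} into statements about the four quantities $b(\nu)$, $D(\nu)$ and $A(\nu)$ at scale $S_n=\kappa n$, and then checking that on the event $V_n$ these quantities are controlled well enough to place $\nu$ inside $C_n$. Throughout, I would use the identity (\ref{2.28}), which says that $m_n(\kappa c_n + z) = \frac1n\sum_j \frac1{\nu_j/n - z}$, so that the sums (\ref{2.4}), (\ref{2.5}), (\ref{2.6}) are all rescaled versions of $\im m_n$, $\re m_n$ and $|m_n'|^2$ (or a difference quotient of $\im m_n$) evaluated at points $z = \kappa c_n + \kappa\tau i$. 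Concretely, with $S_n = \kappa n$, one has $\sum_j \frac{S_n}{\nu_j^2 + b^2 S_n^2} = \frac1\kappa \cdot \frac1n\sum_j \frac{\kappa}{(\nu_j/n)^2 + \kappa^2 b^2} = \frac1{\kappa b}\im m_n(\kappa c_n + \kappa b i)$, so the defining equation (\ref{2.4}) for $b(\nu)$ becomes $\im m_n(\kappa c_n + \kappa b i) = \kappa b$, i.e. exactly the fixed-point equation (\ref{2.24}) but with $\mathbb{E}_X[m_n]$ replaced by $m_n$ itself.

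\textbf{Step 1: existence of $b(\nu)$ on $V_n$ (the inclusion $V_n\subseteq B_{n,S_n}$).} The function $\tau\mapsto \frac1\tau \im m_n(\kappa c_n + \kappa\tau i)$ is continuous and decreasing on $\tau>0$, tends to $+\infty$ as $\tau\to 0^+$ and to $0$ as $\tau\to\infty$; hence a solution $b(\nu)$ of (\ref{2.4}) always exists and $\nu\in B_{n,S_n}$ automatically — so in fact $B_{n,S_n}$ has full measure and the first assertion of (ii) is essentially free. (I would remark that this monotonicity is why $B_{n,S_n}$ was introduced.) What requires work is locating $b(\nu)$ near $\beta_n$. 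On $V_n$ we know $|m_n - \mathbb{E}_X[m_n]| \le \sqrt{\omega_n/n}$ at $\tau = \beta_n$, and by Lemma~\ref{lem2.4} the expectation satisfies $\mathbb{E}_X[m_n(\kappa c_n + \kappa\beta_n i)] = \kappa\beta_n i$ up to the real-part bookkeeping in (\ref{2.25}) (here $c_n = d_n/n + \delta_n$ is chosen precisely so that $\kappa c_n + \kappa\beta_n i = d_n/n + \kappa(\delta_n + \beta_n i)$). So at $\tau = \beta_n$ the quantity $\frac1{\kappa\tau}\im m_n$ equals $1 + O(\sqrt{\omega_n/n})$; combined with the derivative lower bound from the spread of the $\nu_j$ (which is where the points $\tau = \beta/2, 2\beta, 3\beta$ come in — they give two-sided control of $\im m_n$ and $A(\nu)$ on a whole interval of $\tau$'s, not just at one point), one gets $|b(\nu) - \beta_n| = O(\sqrt{\omega_n/n}) \le 1/\omega_n$ for $n$ large, using $\omega_n = \sqrt{\log n}$.

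\textbf{Step 2: the bounds on $A(\nu)$ and $D(\nu)$.} For $A(\nu)$, note $A(\nu) = \sum_j \frac{S_n^3 b^2}{(\nu_j^2 + b^2 S_n^2)^2}$ is, up to constants, $\frac1{\kappa}$ times a difference quotient of $\im m_n$ in $\tau$ — indeed $\frac{d}{d\tau}\bigl[\tau^{-1}\im m_n\bigr]$ controls it, or more simply $A(\nu)$ is comparable to $\frac1\kappa\bigl(\frac{\im m_n(\kappa c_n + \kappa b i)}{b} - \text{a neighboring value}\bigr)$; evaluating the limiting $m(z)$ of (\ref{2.21})–(\ref{2.22}) shows this tends to a strictly positive constant $A_\infty = A_\infty(d,\kappa) > 0$, and on $V_n$ the fluctuation of each ingredient is $O(\sqrt{\omega_n/n}) = o(1)$, so $A(\nu) \ge \alpha$ for a fixed $\alpha \in (0, A_\infty)$ once $n$ is large — this fixes the constant $\alpha$. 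For $D(\nu) = \sum_j \frac{\nu_j}{\nu_j^2 + b^2 S_n^2} = \frac1{\kappa}\re m_n(\kappa c_n + \kappa b i) = \frac1\kappa \re m_n$, one compares to $\frac1\kappa \re m(d + \kappa(\delta + \beta i))$; using (\ref{2.22}) this limit is $\delta/\kappa$, and the definition $\delta_n$ via Lemma~\ref{lem2.4} together with the $O(\sqrt{\omega_n/n})$ fluctuation on $V_n$ gives $|D(\nu) - \delta_n/\kappa| = O(\sqrt{\omega_n/n})$; since $\alpha_n = \alpha$ is fixed and $S_n = \kappa n$, the target tolerance in (\ref{2.11}) is $\sqrt{\omega_n \alpha_n / S_n} \asymp \sqrt{\omega_n/n}$, which matches (after checking the constants are on the right side, e.g. by a small perturbation of $\delta_n$ or by noting the implied constant can be absorbed since $\omega_n\to\infty$). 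Assembling Steps 1–2 gives $V_n \cap B_{n,S_n} \subseteq C_n$, which together with $V_n\subseteq B_{n,S_n}$ proves (ii).

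\textbf{Step 3: part (i), $\mathbb{P}_\nu[V_n]\to 1$.} This is a direct union bound: $V_n$ is the intersection of four events of the form $\{|M_n(\tau)| \le \sqrt{\omega_n/n}\}$, and by Lemma~\ref{lem2.5} with $z = \kappa c_n + \kappa\tau i$ (so $|\im z| = \kappa\tau \ge \kappa\min(\beta_n,\beta/2) \gtrsim 1$), Chebyshev gives $\mathbb{P}_\nu[|M_n(\tau)| > \sqrt{\omega_n/n}] \le \frac{2/(n|\im z|^2)}{\omega_n/n} = \frac{2}{|\im z|^2 \omega_n} \to 0$ since $\omega_n\to\infty$. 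Summing over the four values of $\tau$ yields $\mathbb{P}_\nu[V_n^c]\to 0$.

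The main obstacle is Step 1 together with the $A(\nu)$-part of Step 2: pointwise concentration of $m_n$ at a few fixed heights $\tau$ must be parlayed into control of a \emph{derivative-like} quantity $A(\nu)$ and of the \emph{solution} $b(\nu)$ of a nonlinear equation. The trick — and the reason four sample points $\beta_n, \beta/2, 2\beta, 3\beta$ are used rather than one — is that $\tau\mapsto \tau^{-1}\im m_n(\kappa c_n + \kappa\tau i)$ is monotone decreasing, so pinning its value at $\tau=\beta_n$ together with upper/lower values at nearby heights automatically squeezes both the root $b(\nu)$ and the local slope (hence $A(\nu)$) without any extra smoothness input; one just has to check that the limiting function $\tau^{-1}\im m(d + \kappa\tau i)$ has a genuinely negative slope at $\tau = \beta$, which is an explicit computation with $m(z) = -2z + \sqrt{z^2-1}$ from (\ref{2.21}). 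The weak moment hypothesis enters only through the necessarily weak (order $n^{-1}|\im z|^{-2}$, not exponential) concentration in Lemma~\ref{lem2.5}, which is exactly why we can only afford the loss factor $\omega_n = \sqrt{\log n}\to\infty$ rather than a fixed constant, and why $\alpha_n$ is taken to be a fixed constant rather than growing.
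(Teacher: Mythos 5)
Your overall architecture matches the paper's proof — Chebyshev plus Lemma~\ref{lem2.5} for part (i), and translating the concentration of $m_n$ at finitely many heights $\tau=\beta_n,\beta/2,2\beta,3\beta$ into control of $b(\nu)$, $A(\nu)$, $D(\nu)$ for part (ii) — and your remark that $A(\nu)$ is a discrete derivative-in-$\tau$ of $\tau^{-1}\im m_n$ is exactly the mechanism behind the paper's telescoping chain. Step~3 and the core of Step~2 are correct in spirit.

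However, Step~1 contains a genuine error. You claim that $\tau\mapsto\tau^{-1}\im m_n(\kappa c_n+\kappa\tau i)$ tends to $+\infty$ as $\tau\to 0^+$, hence $b(\nu)$ ``always exists'' and $B_{n,S_n}$ has full measure. This is false. From (\ref{2.30}),
\begin{equation}
\frac{1}{\tau}\im m_n(\kappa c_n+\kappa\tau i)=\sum_{j=1}^n\frac{S_n}{\nu_j^2+\tau^2S_n^2}\;\xrightarrow[\tau\to 0^+]{}\;\sum_{j=1}^n\frac{S_n}{\nu_j^2},
\notag
\end{equation}
which is \emph{finite} whenever all $\nu_j\neq 0$, and there is no reason this limit should exceed $1$ — with $\nu_j$ of order $n$ and $S_n=\kappa n$ it is of order a constant, and whether it exceeds $1$ depends on the configuration. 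So existence of a root $b>0$ of (\ref{2.4}) is not automatic, and $V_n\subseteq B_{n,S_n}$ has to be \emph{proved}, not dismissed as free. The paper does this by introducing the event
\begin{equation}
U_n=\Bigl\{\nu\,:\,\sum_{j=1}^n\frac{S_n}{\nu_j^2+4\beta^2S_n^2}<1<\sum_{j=1}^n\frac{S_n}{\nu_j^2+\beta^2S_n^2/4}\Bigr\},
\notag
\end{equation}
using the strict monotonicity of $\tau\mapsto\tau^{-1}\im m(x+i\tau)$ for the limiting $m$ to fix an $\epsilon>0$ margin, and then transferring this to $m_n$ on $V_n$ via (\ref{2.21}) and the $\sqrt{\omega_n/n}$ fluctuation bound. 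This gives $V_n\subseteq U_n$, hence $b(\nu)$ exists and $\beta/2\le b(\nu)\le 2\beta$ (the localisation (\ref{2.31})), which is also what feeds the rest of Step~2. Your monotonicity observation can be salvaged to produce the same conclusion — since $\tau^{-1}\im m_n$ is decreasing in $\tau$, the lower bound at $\tau=\beta/2$ implies the sum exceeds $1$ for all smaller $\tau$, and the upper bound at $\tau=2\beta$ plus the decay as $\tau\to\infty$ pins the root in $[\beta/2,2\beta]$ — but this is precisely the two-sided $U_n$ argument, not the ``$\to+\infty$'' shortcut. You should also state explicitly the lower bound $A(\nu)\ge\alpha$ via a finite difference at $\tau=2\beta,3\beta$ (rather than a true derivative, so that only the four sampled heights enter), and fix the scattered factor-of-$\kappa$ slips: by (\ref{2.29})--(\ref{2.30}) one has $D(\nu)=\re m_n(\kappa c_n+\kappa bi)$ and $\sum_j S_n/(\nu_j^2+b^2S_n^2)=b^{-1}\im m_n(\kappa c_n+\kappa bi)$, without the extra $\kappa^{-1}$ you wrote.
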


\begin{proof}
Let $\tau>0$ be fixed. Then by Chebyshev's inequality and lemma \ref{lem2.5}
\begin{equation}
\mathbb{P}_{\nu}[|M_n(\tau)|>\sqrt{\frac{\omega_n}n}]\le\frac n{\omega_n}
\mathbb{E}_X[|m_n(\kappa c_n+\kappa\tau i)-\mathbb{E}_X[m_n(\kappa c_n+\kappa\tau i)]|^2]
\le\frac{2}{\omega_n\kappa^2\tau^2}\to 0,
\notag
\end{equation}
as $n\to\infty$. We can apply this to $\tau=\beta_n,\beta/2,2\beta$ and $3\beta$ noting
that $\beta_n\ge\beta/2$ if $n$ is large enough. This proves (i).

Note that
\begin{equation}\label{2.29}
\re m_n(\kappa c_n+\kappa\tau i)=\sum_{j=1}^n\frac{\nu_j}{\nu_j^2+\tau^2S_n^2}
\end{equation}
and
\begin{equation}\label{2.30}
\im m_n(\kappa c_n+\kappa\tau i)=\sum_{j=1}^n\frac{S_n\tau}{\nu_j^2+\tau^2S_n^2}
\end{equation}
Furthermore,
\begin{equation}
h(\tau)=\frac 1{\tau}\im m(x+i\tau)=\frac 2{\pi}\int_{-1}^1\frac{\sqrt{1-t^2}}{(t-x)^2+\tau^2}\,dt
\notag
\end{equation}
is strictly decreasing in $\tau$ for each fixed $x$.

Define
\begin{equation}
U_n=\{\nu\,;\,\sum_{j=1}^n\frac{S_n}{\nu_j^2+4\beta^2S_n^2}<1<
\sum_{j=1}^n\frac{S_n}{\nu_j^2+\beta^2S_n^2/4}\}.
\notag
\end{equation}
We want to show that $V_n\subseteq U_n$ if $n$ is large enough. Since $h(\tau)$ is strictly decreasing,
(\ref{2.22}) gives
\begin{equation}
\frac 1{2\beta}\im m(\kappa c+2\kappa\beta i)<1-\epsilon<1=\frac 1{\beta}\im m(\kappa c+\kappa\beta i)
<1+\epsilon<\frac 2{\beta}\im m(\kappa c+\kappa\beta i/2),
\notag
\end{equation}
if we choose $\epsilon$ small enough. Here $c=d/\kappa+\delta=\lim_{n\to\infty} c_n$. It follows from this
and (\ref{2.21}) that 
\begin{equation}
\frac 1{2\beta}|\im\mathbb{E}_X[m_n(\kappa c_n+2\kappa\beta i)]\le 1-\epsilon
<1+\epsilon\le\frac 2{\beta}\im\mathbb{E}_X[m_n(\kappa c_n+\kappa\beta i/2)]
\notag
\end{equation}
for all $n$ large enough. If $\nu\in V_n$ it follows from this that
\begin{equation}
\frac 1{2\beta}\im m_n(\kappa c_n+2\kappa\beta i)\le 1-\epsilon+\sqrt{\frac{\omega_n}n}<1<
1+\epsilon-\sqrt{\frac{\omega_n}n}\le\frac 2{\beta}\im m_n(\kappa c_n+\kappa\beta i/2)
\notag,
\end{equation}
and we see from (\ref{2.30}) that this gives $\nu\in U_n$.

Hence, if $n$ is large enough, then
\begin{equation}\label{2.31}
\beta/2\le b(\nu)\le 2\beta
\end{equation}
for all $\nu\in V_n$.
Let $\nu\in V_n$. Then, using (\ref{2.31}), we see that
\begin{align}
&A(\nu)=\sum_{j=1}^n\frac{S_n^3b^2}{(\nu_j^2+b^2S_n^2)^2}\ge
\frac 14\sum_{j=1}^n\frac{S_n^3\beta^2}{(\nu_j^2+4\beta^2S_n^2)^2}
\notag\\
&\ge \frac{S_n}{20}\sum_{j=1}^n\frac{5S_n^2\beta^2}{(\nu_j^2+4\beta^2S_n^2)(\nu_j^2+9\beta^2S_n^2)}
=\frac 1{20}\left(\sum_{j=1}^n\frac{S_n}{\nu_j^2+4\beta^2S_n^2}-
\sum_{j=1}^n\frac{S_n}{\nu_j^2+9\beta^2S_n^2}\right).
\notag
\end{align}
By (\ref{2.21}), (\ref{2.30}) and the fact that $\nu\in V_n$ it follows from this that
\begin{align}
A(\nu)&\ge\frac 1{20}\left(\frac 1{2\beta}\im M_n(2\beta)-\frac 1{3\beta}\im M_n(3\beta)\right)
\notag\\
&+\frac 1{20}\left(\frac1{2\beta}\im\mathbb{E}_X[m_n(\kappa c_n+2\kappa\beta i)]-
\frac1{3\beta}\im\mathbb{E}_X[m_n(\kappa c_n+3\kappa\beta i)]\right)
\notag\\
&\ge\frac 1{40}\left(\frac 1{2\beta}\im m(\kappa c+2\kappa\beta i)-
\frac 1{3\beta}\im m(\kappa c+3\kappa\beta i)\right)\doteq\alpha>0,
\notag
\end{align}
for large $n$.

Next, we will show that, if $n$ is large enough,
\begin{equation}\label{2.32}
|b(\nu)-\beta_n|\le C\sqrt{\frac{\omega_n}n}\le \frac 1{\omega_n}
\end{equation}
for all $\nu\in V_n$. It follows from (\ref{2.4}), (\ref{2.24}), (\ref{2.30}) and $\nu\in V_n$, that
\begin{equation}
\left|\sum_{j=1}^n\frac{S_n}{\nu_j^2+\beta_n^2S_n^2}-\sum_{j=1}^n\frac{S_n}{\nu_j^2+b^2S_n^2}
\right|\le \sqrt{\frac{\omega_n}n},
\notag
\end{equation}
which implies
\begin{equation}
|b^2-\beta_n|\sum_{j=1}^n\frac{S_n^3}{(\nu_j^2+
\beta_n^2S_n^2)(\nu_j^2+b^2S_n^2)}\le \sqrt{\frac{\omega_n}n}.
\notag
\end{equation}
Now,
\begin{equation}
\sum_{j=1}^n\frac{S_n^3}{(\nu_j^2+\beta_n^2S_n^2)(\nu_j^2+b^2S_n^2)}\ge
\frac 1{\beta^2}
\sum_{j=1}^n\frac{S_n^3\beta^2}{(\nu_j^2+4\beta^2S_n^2)(\nu_j^2+9\beta^2S_n^2)}\ge\frac{20\alpha}{\beta^2}
\notag
\end{equation}
by the previous argument, since $\beta_n\le 3\beta$ for large $n$ and $b\le 2\beta$ by (\ref{3.31}).
Consequently,
\begin{equation}
|b-\beta_n|\le\frac{\beta}{20\alpha} \sqrt{\frac{\omega_n}n},
\notag
\end{equation}
since $b+\beta_n\ge \beta/2+\beta_n\ge \beta$ for large $n$. This proves (\ref{2.32}).

It remains to show that 
\begin{equation}\label{2.33}
|D(\nu)-\delta_n|\le C\sqrt{\frac{\omega_n}n}
\end{equation}
for all $\nu\in$ and large $n$. It follows from (\ref{2.24}) and (\ref{2.29}) that
\begin{align}
D(\nu)-\delta_n&=\re m_n(\kappa c_n+\kappa bi)-\re m_n(\kappa c_n+\kappa \beta_ni)
\notag\\
&+\re m_n(\kappa c_n+\kappa \beta_ni)-\re \mathbb{E}_X[m_n(\kappa c_n+\kappa \beta_ni)].
\notag
\end{align}
We can use (\ref{2.31}) and (\ref{2.32}) to show that
\begin{equation}
|\re m_n(\kappa c_n+\kappa bi)-\re m_n(\kappa c_n+\kappa \beta_ni)|\le\frac{\kappa|b-\beta_n|}
{\kappa^2\beta_n b}\le C\sqrt{\frac{\omega_n}n}.
\notag
\end{equation}
Furthermore, the definition of $V_n$ gives
\begin{equation}
|\re m_n(\kappa c_n+\kappa \beta_ni)-\re \mathbb{E}_X[m_n(\kappa c_n+\kappa \beta_ni)]|
\le \sqrt{\frac{\omega_n}n}
\notag
\end{equation}
for all $\nu\in V_n$.

This proves (ii) of lemma \ref{lem2.6}.
\end{proof}

\subsection{Proof of the approximation theorem}\label{sect2.3}

In this section we will prove the convergence theorem \ref{thm2.1}. A change of variables gives
\begin{equation}\label{2.34}
K_{n,S}^{\nu}(u,v)=\frac 1{(2\pi i)^2}\int_{\gamma_L}\,dz\int_{\Gamma_M}\,dw
e^{S(w^2-z^2)/2+uz-vw}\frac 1{w-z}\prod_{j=1}^n\frac{Sw-\nu_j}{Sz-\nu_j},
\end{equation}
where $\gamma_L$ is the positively oriented rectangle with corners at $\pm L\pm bi$, $|\nu_j|<L$ for all $j$
and $M>L$. Set
\begin{equation}\label{2.33:2}
f(z)=\frac {z^2}2+D(\nu)z+\frac 1S\sum_{j=1}^n\log (Sz-\nu_j)
\end{equation}
and
\begin{equation}\label{2.35}
\tilde{K}_{n,S}^{\nu}(u,v)=\frac 1{(2\pi i)^2}\int_{\gamma}\,dz\int_{\Gamma}\,dw
e^{uz-vw}\frac 1{w-z}e^{S(f(w)-f(z))},
\end{equation}
where $\gamma=\gamma_++\gamma_-$ and $\gamma_{\pm}:t\to\mp t\pm ib$, $t\in\mathbb{R}$, and
$\Gamma=\Gamma_0: s\to is$, $s\in\mathbb{R}$. If we move $\Gamma_M$ to $\Gamma_0$ and let 
$L\to\infty$ it follows from the residue theorem that
\begin{equation}\label{2.36}
K_{n,S}^{\nu}(u-SD(\nu),v-SD(\nu))-\frac{\sin b(u-v)}{\pi(u-v)}=\tilde{K}_{n,S}^{\nu}(u,v).
\end{equation}
Hence, theorem \ref{thm2.1}, follows from
\begin{equation}\label{2.37}
|\tilde{K}_{n,S}^{\nu}(u,v)|\le\frac{C}{\sqrt{SA(\nu)}}e^{3u^2/SA(\nu)}
\end{equation}
for all $v\in B_{n,S}$. In order to prove this inequality we have to choose the right contours in
(\ref{2.35}). The following computation motivates the choice of contours.

Let $z(t)=x(t)+iy(t)$ and set $g(t)=\re f(z(t))$. Then, using (\ref{2.4}) and (\ref{2.5}) we see
that
\begin{equation}\label{2.38}
g'=\sum_{j=1}^n\left[\frac{S(xx'-yy')+x\nu_j}{\nu_j^2+b^2S^2}+
\frac{S(xx'+yy')-x'\nu_j}{(Sx-\nu_j)^2+S^2y^2}\right].
\end{equation}
If we write the sum of the two fractions in (\ref{2.38}) as one fraction the numerator becomes
\begin{equation}
S^2[-x^2x'+2xyy'+y^2x'-b^2x']\nu_j
+S^3[(xx'-yy')(x^2+y^2)+b^2(xx'+yy')].
\notag
\end{equation}
We try to choose $z(t)$ so that the expression in the numerator is independent of $\nu_j$.
This gives
\begin{equation}
\frac{d}{dt}[-\frac 13x^3+y^2x-b^2x]=0
\notag
\end{equation}
or
\begin{equation}
x[-\frac 13x^2+y^2-b^2]=C.
\notag
\end{equation}
If $x(0)=0$, $y(0)=\pm b$ we get $C=0$ and two possibilities $z(t)=i(t\pm b)$ or
$z(t)=t\pm i\sqrt{t^2/3+b^2}$. 

If we take $z(t)=i(t\pm b)$ we get
\begin{equation}\label{2.39}
\frac{d}{dt}\re f(z(t))=-St\sum_{j=1}^n\frac{S^2(t\pm b)(t\pm 2b)}{(\nu_j^2+b^2S^2)(\nu_j^2+(t+b)^2S^2)}.
\end{equation}
If instead we take $z(t)=t\pm i\sqrt{t^2/3+b^2}$ we obtain
\begin{equation}\label{2.40}
\frac{d}{dt}\re f(z(t))=St\sum_{j=1}^n\frac{8S^2t^2/9+2b^2S^2}{(\nu_j^2+b^2S^2)((St-\nu_j)^2+(t^2/3+b^2)S^2)}.
\end{equation}
Using this result we can prove

\begin{lemma}\label{lem2.7}
Let $w_{\pm}(s)=i(s\pm ib)$ and $z_{\pm}(t)=t\pm i\sqrt{t^2/3+b^2}$. Assume that $\nu\in B_{n,S}$. 
\begin{itemize}
\item[(i)] If $\pm s+b\ge 0$, then
\begin{equation}\label{2.41}
\re (f(w_{\pm}(s))-f(\pm bi))\le-\frac 16A(\nu)s^2.
\end{equation}
\item[(ii)] For each $t\in\mathbb{R}$,
\begin{equation}\label{2.42}
\re (f(\pm bi)-f(z_{\pm}(t)))\le-\frac 16A(\nu)t^2.
\end{equation}
\end{itemize}
\end{lemma}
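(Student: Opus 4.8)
The plan is to read off both bounds from the explicit formulas (\ref{2.39}) and (\ref{2.40}) for the derivative of $\re f$ along the two families of curves, integrating each from the common base point $\pm bi$ — the parameter value $0$ on every one of the curves $w_\pm$, $z_\pm$ — and then estimating the resulting one-variable integral term by term in $j$. The only input about $\nu$ is the defining relation (\ref{2.4}) for $b=b(\nu)$ and the definition (\ref{2.5}) of $D(\nu)$, which are exactly what make (\ref{2.38})--(\ref{2.40}) hold, together with the rewriting $A(\nu)=b^2\sum_{j}S^3/(\nu_j^2+b^2S^2)^2$ of (\ref{2.6}), to which the term-by-term estimates will collapse.

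For (i), by the reflection symmetry $z\mapsto\bar z$ (so that $f(\bar z)=\overline{f(z)}$ on the relevant region) it is enough to treat $w_+$ with $s\ge -b$; since $w_+(0)=bi$, formula (\ref{2.39}) yields
\[
\re\bigl(f(w_+(s))-f(bi)\bigr)=-\int_0^s Sr\sum_{j=1}^n\frac{S^2(r+b)(r+2b)}{(\nu_j^2+b^2S^2)(\nu_j^2+(r+b)^2S^2)}\,dr .
\]
For $r\ge -b$ the summand is nonnegative, so the integrand has the opposite sign to $r$; in particular the left-hand side is $\le0$ for all $s\ge-b$, and this is where the hypothesis $\pm s+b\ge0$ is used — it also keeps $|s|\le b$ on the side where the summand would otherwise change sign. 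For the quantitative bound I split into $s\ge0$ and $-b\le s\le 0$. If $s\ge0$ one uses $(r+b)(r+2b)(\nu_j^2+b^2S^2)\ge b^2(\nu_j^2+(r+b)^2S^2)$ for $r\ge0$ (the difference of the two sides equals $\nu_j^2(r^2+3rb+b^2)+b^3S^2(r+b)\ge0$); with $\int_0^s r\,dr=s^2/2$ the sum then collapses to $\re(f(w_+(s))-f(bi))\le-\tfrac12A(\nu)s^2$. If $-b\le s\le0$, substituting $r=-\rho$ and then $\rho=b\tau$, using $(b-\rho)^2\le b^2$ on the range together with $\int_0^{\sigma}\tau(1-\tau)(2-\tau)\,d\tau=\sigma^2(1-\sigma/2)^2$, one gets $\re(f(w_+(s))-f(bi))\le-A(\nu)s^2(1-|s|/(2b))^2\le-\tfrac14A(\nu)s^2$ because $|s|\le b$. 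Either way this is stronger than the asserted $-\tfrac16A(\nu)s^2$.

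For (ii), $z_\pm(0)=\pm bi$, and since all $\nu_j$ are real one has $\re f(z_-(t))=\re f(z_+(t))$ and $\re f(-bi)=\re f(bi)$, so it suffices to treat $z_+$; then (\ref{2.40}) gives, for $t\ge0$,
\[
\re\bigl(f(bi)-f(z_+(t))\bigr)=-\int_0^t Sr\sum_{j=1}^n\frac{8S^2r^2/9+2b^2S^2}{(\nu_j^2+b^2S^2)\bigl((Sr-\nu_j)^2+(r^2/3+b^2)S^2\bigr)}\,dr\le 0,
\]
the sign being clear since the summand is nonnegative; the case $t\le0$ is the mirror image with $\nu_j$ replaced by $-\nu_j$. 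Estimating term by term, the claim reduces to the pointwise bound
\[
\frac{r\,(4r^2/9+b^2)}{(Sr-\nu_j)^2+(r^2/3+b^2)S^2}\ \ge\ \tfrac14\cdot\frac{r\,b^2}{\nu_j^2+b^2S^2}\qquad(r\ge0),
\]
which after clearing denominators and cancelling $r$ reads $(4r^2/9+b^2)(\nu_j^2+b^2S^2)\ge\tfrac14b^2(\tfrac43S^2r^2-2S\nu_j r+\nu_j^2+b^2S^2)$; one proves it by bounding the cross term as $-2S\nu_j r\le\lambda S^2r^2+\nu_j^2/\lambda$ with any fixed $\lambda\in[\tfrac13,\tfrac49]$ and comparing the coefficients of $b^4S^2$, $b^2\nu_j^2$ and $b^2S^2r^2$. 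Integrating over $[0,t]$ contributes $t^2/2$, and as before the sum collapses to a constant times $A(\nu)t^2$ better than $\tfrac16A(\nu)t^2$. I expect the one genuinely delicate step to be this last pointwise inequality when $\nu_j<0$: the cross term $-2S\nu_j r$ is then positive and of the same order $S^2r^2$ as the leading terms, so a crude estimate loses too much and $\lambda$ must be taken inside the narrow admissible window; the rest — the three coefficient comparisons and the passage to $A(\nu)$ — is routine.
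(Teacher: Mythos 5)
Your proof is correct and follows the paper's approach: both integrate the explicit derivative formulas (\ref{2.39}) and (\ref{2.40}) from the base point and then prove term-by-term pointwise lower bounds on the summand that collapse to $A(\nu)$ via (\ref{2.4}) and (\ref{2.6}). The specific pointwise inequalities you use differ slightly in detail (yielding the marginally sharper constant $\tfrac14$ in place of $\tfrac16$), but the strategy, the role of the hypothesis $\pm s+b\ge 0$, and the reflection-symmetry reduction are the same as in the paper.
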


\begin{proof}
We see that, for $-b\le s\le 0$,
\begin{align}
&\re (f(w_{+}(s))-f(bi))=S^3\int_s^0 t\sum_{j=1}^n\frac{(t+b)(t+2b)}{(\nu_j^2+b^2S^2)(\nu_j^2+(b+t)^2S^2)}\,dt
\notag\\
&\le S^3\int_s^0 t\sum_{j=1}^n\frac{(b+t)b}{(\nu_j^2+b^2S^2)^2}\,dt
\notag\\
&=\frac{A(\nu)}b \left(-\frac{s^2}3\right)(\frac 32b+s)\le-\frac {A(\nu)}6 s^2.
\notag
\end{align}
If $s\ge 0$, we get
\begin{align}
&\re (f(w_{+}(s))-f(bi))=S^3\int_0^s t\sum_{j=1}^n\frac{(t+b)(t+2b)}{(\nu_j^2+b^2S^2)(\nu_j^2+(b+t)^2S^2)}\,dt
\notag\\
&\le -\int_0^s t\sum_{j=1}^n\frac{S^3(t+b)^2}{(\nu_j^2+b^2S^2)(\nu_j^2+(b+t)^2S^2)}\,dt.
\notag
\end{align}
If we use the fact that $x\to x^2(\nu^2+x^2)^{-1}$ is increasing in $x\ge b$, 
we see that the last expression
is
\begin{equation}
\le-A(\nu)\int_0^s t\,dt=-\frac 12A(\nu) s^2.
\notag
\end{equation}
The contour $w_-(s)$ is treated analogously. This proves (i) in the lemma.

Now, for $t\ge 0$,
\begin{align}
&\re (f(z_{+}(s))-f(bi))=S\int_0^t\tau\sum_{j=1}^n\frac{8S^2\tau^2/9+2b^2S^2}{(\nu_j^2+b^2S^2)
((S\tau-\nu_j)^2+S^2(\tau^2/3+b^2))}\,d\tau
\notag\\
&\ge S\int_0^t\tau\sum_{j=1}^n\frac{8S^2\tau^2/9+2b^2S^2}{(\nu_j^2+b^2S^2)(2\nu_j^2+7S^2\tau^2/3+b^2S^2)}.
\notag
\end{align}
It is easy to see that
\begin{equation}
\frac{8S^2\tau^2/9+2b^2S^2}{2\nu_j^2+7S^2\tau^2/3+b^2S^2}\ge\frac 13\frac{S^2b^2}{\nu_j^2+b^2S^2}
\notag
\end{equation}
and hence we obtain (\ref{2.42}) for $z_+(t)$ and $t\ge 0$. The argument for $t\le 0$ and the argument
for $z_-(t)$ are similar.
\end{proof}

We can now prove the estimate (\ref{2.37}). Let $-\gamma_+$ be given by $z_+(t)$, $t\in\mathbb{R}$,
$\gamma_-$ by $z_-(t)$, $t\in\mathbb{R}$, $\Gamma_+$ by $w_+(s)$, $s\ge -b$ and $\Gamma_-$ by
$w_-(s)$, $s\le b$. Then,
\begin{equation}
\tilde{K}_{n,S}^{\nu}(u,v)=\frac 1{(2\pi i)^2}\int_{\gamma_++\gamma_-}\,dz\int_{\Gamma_++\Gamma_-}\,dw
e^{uz-vw}\frac 1{w-z}e^{S(f(w)-f(z))},
\notag
\end{equation}
Consider the case when $z$ lies on $\gamma_+$ and $w$ on $\Gamma_+$. The other cases are similar.
Now, by lemma \ref{lem2.7}
\begin{align}\label{2.43}
&\left|\frac 1{(2\pi i)^2}\int_{\gamma_+}\,dz\int_{\Gamma_+}\,dw
e^{uz-vw}\frac 1{w-z}e^{S(f(w)-f(z))}\right|
\notag\\
&\le\frac 1{4\pi^2}\int_{-\infty}^\infty\,dt\int_{-b}^\infty\,ds\frac{e^{ut}}
{\sqrt{t^2+(b+s-\sqrt{t^2/3+b^2})^2}}e^{-SA(\nu)(s^2+t^2)/6}.
\end{align}
Since $t^2+(b+s-\sqrt{t^2/3+b^2})^2\ge (t^2+s^2)/3$, we see that the expression in the right hand side of 
(\ref{2.43}) is
\begin{equation}
\le\frac{\sqrt{2}}{4\pi^2}\int_{\mathbb{R}^2}\frac{e^{ut}}{\sqrt{t^2+s^2}}e^{-SA(\nu)(s^2+t^2)/6}
\le\frac{C}{SA(\nu)}e^{3u^2/A(\nu)},
\notag
\end{equation}
where $C$ is a numerical constant. This completes the proof of the approximation theorem.

\section{Edge universality}\label{sect3}

\subsection{Convergence to the Airy kernel point process}\label{sect3.1}
Let $\nu=\{\nu_j\}_{j=1}^n\subseteq\mathbb{R}$ and $S>0$ be given. We can then choose $b=b(\nu)$ so that 
$bS>\max\nu_j$ and 
\begin{equation}\label{3.1}
\sum_{j=1}^n\frac{S}{(bS-\nu_j)^2}=1.
\end{equation}
Define $a=a(\nu)$ and $d=d(\nu)$ by
\begin{equation}\label{3.2}
a=b+\sum_{j=1}^n\frac{1}{bS-\nu_j}
\end{equation}
and
\begin{equation}\label{3.3}
d=\left(\sum_{j=1}^n\frac{S^2}{(bS-\nu_j)^3}\right)^{1/3}.
\end{equation}
Let $0<\alpha_0<\beta_0$ be given and define
\begin{equation}\label{3.4}
F_n=\{\nu\,;\,\text{$\alpha_0\le b-\nu_j/S\le\beta_0$ for $1\le j\le n$}\} .
\end{equation}
We then have the following estimate and limit result for the correlation kernel given by
(\ref{2.1}).

\begin{theorem}\label{thm3.1}
There are constants $C$ and $S_0$ depending only on $\alpha_0$, $\beta_0$ so that
\begin{equation}\label{3.5}
dS^{1/3}K_{n,S}^{\nu}(aS+\xi dS^{1/3},aS+\xi dS^{1/3})\le Ce^{-\xi}
\end{equation}
for all $\nu\in F_n$, $\xi\ge 0$ and $S\ge S_0$.Furthermore, if $S=\kappa n$, with $\kappa>0$ fixed, then
\begin{equation}\label{3.6}
\lim_{n\to\infty}dS_n^{1/3}e^{(\eta-\xi)dS_n^{1/3}}K_{n,S}^{\nu}(aS_n+\xi dS_n^{1/3},aS_n+\eta dS_n^{1/3})
=A(\xi,\eta)
\end{equation}
uniformly for $\nu\in F_n$ and $\xi,\eta$ in a compact subset of $\mathbb{R}$. Here $A(\xi,\eta)$ is the 
Airy kernel (\ref{1.7}).
\end{theorem}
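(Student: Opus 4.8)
The plan is to run a steepest-descent analysis of the double contour integral (\ref{2.34}), exactly parallel to section \ref{sect2.3}, but organised around a single cubic (coalescing) saddle rather than the two simple saddles $\pm bi$. Substituting $u=aS+\xi dS^{1/3}$ and $v=aS+\eta dS^{1/3}$ in (\ref{2.34}) and collecting the $w$- and $z$-dependent parts of the exponent, it takes the form $S(f(w)-f(z))+\xi dS^{1/3}z-\eta dS^{1/3}w$ with
\[
f(z)=\frac{z^2}{2}-az+\frac1S\sum_{j=1}^n\log(Sz-\nu_j).
\]
A short computation shows that (\ref{3.2}) is precisely the condition $f'(b)=0$, (\ref{3.1}) is $f''(b)=0$, and then $f'''(b)=2d^3$ with $d$ as in (\ref{3.3}) (and $d>0$); so $b$ is a double critical point and $f(z)-f(b)=\frac{d^3}{3}(z-b)^3+O((z-b)^4)$ near $b$. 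Under the rescaling $z=b+\zeta(dS^{1/3})^{-1}$, $w=b+\omega(dS^{1/3})^{-1}$ the leading part of the exponent becomes $\frac13(\omega^3-\zeta^3)+\xi\zeta-\eta\omega$ up to the $(\xi,\eta)$-dependent prefactor displayed in (\ref{3.6}), while $\frac{dz\,dw}{w-z}$ contributes the factor $(dS^{1/3})^{-1}$; this already identifies the expected limit as the standard double contour integral (each factor being the contour integral for the Airy function), i.e.\ the Airy kernel (\ref{1.7}), provided the $z$-contour leaves $b$ in the directions $e^{\pm2\pi i/3}$ and the $w$-contour in the directions $e^{\pm i\pi/3}$.

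First I would fix the contours once and for all. As in lemma \ref{lem2.7} I would not follow the exact steepest-descent paths but choose explicit curves: for the $w$-integral a curve through $b$ that leaves in the directions $e^{\pm i\pi/3}$ and escapes to infinity through the region $\re(w^2)<0$, along which $\re(f(b)-f(w))\ge0$, obtained by pushing the vertical line $\Gamma_M$ to the left, which is legitimate since $bS>\max_j\nu_j$ places $b$ to the right of all the poles $\nu_j/S$; and for the $z$-integral a bounded loop encircling the interval in which the $\nu_j/S$ lie, which by the defining condition of $F_n$ sits at distance between $\alpha_0$ and $\beta_0$ to the left of $b$, the loop passing through $b$ with tangent directions $e^{\pm2\pi i/3}$ there and satisfying $\re(f(z)-f(b))\ge0$ on it. Keeping this loop to the left of the $w$-curve, no residue is collected when $\gamma_L$ and $\Gamma_M$ are deformed and $L\to\infty$, so $K_{n,S}^\nu$ equals the deformed double integral with no additional term --- in contrast to the sine-kernel case (\ref{2.36}), where a pole crossing supplied the main term. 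The main obstacle, and the technical heart of the matter, is to prove --- uniformly in $\nu\in F_n$ --- the quantitative bounds $\re(f(z)-f(b))\ge c|z-b|^3$ for $|z-b|$ small and $\re(f(z)-f(b))\ge c'$ for $|z-b|$ not small on the $z$-loop, and the analogous lower bound for $\re(f(b)-f(w))$ on the $w$-curve, with $c,c'$ depending only on $\alpha_0,\beta_0$; the cubic lower bound near $b$ is what produces the Airy scaling and is genuinely delicate, since naive choices (a circle around the poles, a vertical line) leave $b$ in a neutral direction of the cubic and give only a quartic lower bound, which would destroy the limit. The proof of these bounds should follow the device of (\ref{2.38})--(\ref{2.42}): differentiate $\re f$ along the chosen curve, write the resulting sum over $j$ as a single rational function of $\nu_j$, and estimate each term using $\alpha_0\le b-\nu_j/S\le\beta_0$ together with (\ref{3.1}); the curves must be chosen so that this differentiated expression has a definite sign.

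Granting the contour estimates, both assertions follow. For (\ref{3.5}), on the diagonal the extra factor is $e^{\xi dS^{1/3}(z-w)}$, and since the $z$-loop lies to the left of $b$ and the $w$-curve to the right we have $\re(z-w)\le0$ on the contours, so $|e^{\xi dS^{1/3}(z-w)}|\le1$ for $\xi\ge0$; together with the decay of $e^{S(f(w)-f(z))}$ this gives a bound $O(1)$ uniform in $S\ge S_0$ and $\nu\in F_n$, while for large $\xi$ one shifts the contours through the genuine saddle, where $f'(z)=u/S$, and picks up a factor $e^{-c\xi^{3/2}}$; since $e^{-c\xi^{3/2}}\le Ce^{-\xi}$ for all $\xi\ge0$ this yields (\ref{3.5}). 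For (\ref{3.6}) I would apply dominated convergence in the rescaled variables $\zeta,\omega$ on the part of the contours within a fixed neighbourhood of $b$: there the integrand converges pointwise to the Airy integrand, the bounds above (which are stable under the rescaling) furnish an integrable dominating function uniformly in $\nu\in F_n$ and in $\xi,\eta$ ranging over a compact set, and the neglected terms $O((z-b)^4)$ contribute $O(S^{-1/3})$; the remaining part of the contours contributes $O(e^{-cS})$ by the estimate $\re(f(z)-f(b))\ge c'$ there and is negligible for $S\ge S_0$. Evaluating the limiting double integral reproduces the Airy kernel (\ref{1.7}), which finishes the proof. Uniformity over $F_n$ is automatic because $b(\nu)$, $a(\nu)$, $d(\nu)$ and all constants in the estimates depend on $\nu$ only through the location of the window $[b-\beta_0,b-\alpha_0]$ that contains the numbers $\nu_j/S$; in particular $d^3$ and $a-b$ lie in fixed intervals determined by $\alpha_0,\beta_0$.
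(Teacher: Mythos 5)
Your overall plan — expand around the double critical point $b$ where $f'(b)=f''(b)=0$ and $f'''(b)=2d^3$, rescale by $dS^{1/3}$, and recognize the Airy kernel — matches the paper's approach (section \ref{sect3.3}). But there is a genuine gap in the way you propose to choose the contours, and the reasoning you give for that choice is incorrect.

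You want the global $z$-contour to leave $b$ in the steepest-ascent directions $e^{\pm 2\pi i/3}$ and the global $w$-contour in the steepest-descent directions $e^{\pm i\pi/3}$, and you justify this by asserting that the neutral directions (e.g.\ the vertical line) ``give only a quartic lower bound, which would destroy the limit.'' This is wrong on both counts, and the error hides the actual mechanism. If you apply the $g'$-device you yourself invoke (choosing $z(t)$ so that the $\nu_j$-dependence in the numerator of $\frac{d}{dt}\re f$ drops out, as in (\ref{2.38})--(\ref{2.42})), the resulting ODE in the edge case, derived from (\ref{3.102}), is $X(-\tfrac13X^2+y^2)=0$ with $X=x-b$, whose solutions are exactly $x=b$ (the vertical line) and $y=\pm X/\sqrt3$ (the rays at angles $\pi/6$ to the real axis). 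These are precisely the \emph{neutral} directions $\arg(z-b)\in\{\pm\pi/2,\,\pm\pi/6,\,\pi\pm\pi/6\}$ of the cubic, not the steepest-descent ones. So the $g'$-device that gives you a $\nu$-independent sign is \emph{incompatible} with the directions $e^{\pm 2\pi i/3}$, $e^{\pm i\pi/3}$ you want; on those rays the numerator retains a $\nu_j$-dependent piece and the clean monotonicity argument is lost. You neither notice this incompatibility nor supply an alternative estimate.

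The paper resolves this by accepting the neutral contours and the resulting quartic bounds (\ref{3.106})--(\ref{3.107}). Quartic decay does \emph{not} destroy the limit: it still gives $e^{-cS\epsilon^4}=e^{-cS^{1/6}}$ for the tails with $\epsilon=S^{-5/24}$, which suffices for (\ref{3.108}), (\ref{3.112}); and in the local window, after the Taylor expansion of claim \ref{clm3.7} and the rescaling $\tau=dS^{1/3}t$, the quartic contributes only $\lambda\tau^4/S^{1/3}\to0$ while the surviving cubic is handled by a further deformation of the \emph{rescaled} local contours to the piecewise-linear paths $C_1+\dots+C_5$ and $C'$ in (\ref{3.116})--(\ref{3.122}), where the cubic does decay and where the shift by $+i$ in $C_3$, $C'$ produces the factor $e^{-(\xi+\eta)}$ needed for (\ref{3.5}) and (\ref{3.123}). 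This two-stage structure — neutral global contours chosen to control $\re f$ uniformly in $\nu$, followed by a local contour deformation in the rescaled variable to realize the Airy integrand — is the part you are missing; your proposal collapses it into a single choice of steepest-descent contours for which the uniform sign estimate is not established. (Your route to (\ref{3.5}) via shifting through the genuine saddle for large $\xi$ to gain $e^{-c\xi^{3/2}}$ is a reasonable alternative to the paper's direct $e^{-\xi}$ bound, and the dominated-convergence outline for (\ref{3.6}) is standard; but both rest on the contour estimates you have not justified.)
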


The theorem will be proved in section \ref{sect3.3}.

Let $\gamma_n$ and $\epsilon_n$ be given sequences of positive numbers, where $\epsilon_n\to 0$ as $n\to\infty$.
Take $S_n=\kappa n$, $\kappa>0$, let $\delta>0$ be given and define
\begin{equation}\label{3.7}
G_n=\{\nu\in F_n\,;\,\left|\frac{a(\nu)-\gamma_n}{n^{1/3}}\right|\le\epsilon_n,
|d(\nu)-\delta|\le\epsilon_n\}.
\end{equation}
Let $\mathbb{P}_{\nu}$ be a probability measure on point configurations $\nu=\{\nu_j\}_{j=1}^n$ in 
$\mathbb{R}$, and let $\mathbb{E}_{n,S_n}$ be the expectation for the point process $\mu=\{\mu_j\}_{j=1}^n$
on $\mathbb{R}$ defined by (\ref{2.8}).

\begin{theorem}\label{thm3.2} Assume that there is a choice of $\alpha_0,\beta_0,\gamma_n,\epsilon_n,\delta$,
where $\epsilon_n\to 0$ as $n\to\infty$, so that
\begin{equation}\label{3.8}
\lim_{n\to\infty}\mathbb{P}_{\nu}[G_n]=1.
\end{equation}
Then, for any $\psi\in C_c^+(\mathbb{R})$,
\begin{equation}\label{3.9}
\lim_{n\to\infty}\mathbb{E}_{n,S_n}\left[\exp(-\sum_{j=1}^n\psi((\mu_j-\gamma_n)/\delta n^{1/3}))\right]
=\mathbb{E}_{\text{Airy}}\left[\exp(-\sum_{j=1}^n\psi(\mu_j))\right].
\end{equation}
Furthermore,
\begin{equation}\label{3.10}
\lim_{n\to\infty}\mathbb{P}_{n,S_n}\left[\frac 1{\delta n^{1/3}}(\max_{1\le j\le n}\mu_j-\gamma_n)\le t\right]
=F_{\text{TW}}(t)
\end{equation}
for each $t\in\mathbb{R}$.
\end{theorem}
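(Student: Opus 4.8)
The proof runs parallel to that of Theorem \ref{thm2.2}, with the sine kernel replaced by the Airy kernel and Theorem \ref{thm2.1} replaced by Theorem \ref{thm3.1}. By (\ref{3.8}) and (\ref{2.8}), and since the inner expectation below is bounded by $1$ while $\mathbb{P}_\nu[G_n^c]\to 0$, it is enough to prove that
\begin{equation}
\mathbb{E}_\nu\left[1_{G_n}\mathbb{E}^\nu\left[\exp\left(-\sum_{j=1}^n\psi\left(\frac{x_j(S_n)-\gamma_n}{\delta n^{1/3}}\right)\right)\right]\right]\to\det(I-\phi^{1/2}A\phi^{1/2})
\notag
\end{equation}
as $n\to\infty$, where $\phi=1-e^{-\psi}$. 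For a fixed $\nu\in G_n$, the identity (\ref{2.2}) together with the change of variables $x\mapsto(x-\gamma_n)/\delta n^{1/3}$ expresses the inner expectation as $\det(I-\phi^{1/2}\hat L_n^\nu\phi^{1/2})$ on $L^2(\mathbb{R})$, where $\hat L_n^\nu$ is the rescaled kernel built from $K_{n,S_n}^\nu$. Since a Fredholm determinant is unchanged under a conjugation $K(\xi,\eta)\mapsto e^{g(\eta)-g(\xi)}K(\xi,\eta)$, we may insert the factor $e^{(\eta-\xi)d(\nu)S_n^{1/3}}$ of (\ref{3.6}); and on $G_n$ the bounds $|a(\nu)-\gamma_n|\le\epsilon_n n^{1/3}$, $|d(\nu)-\delta|\le\epsilon_n$ from (\ref{3.7}) let us identify the rescaled spectral variable with the Airy variable of Theorem \ref{thm3.1} up to an error that is $o(1)$ uniformly in $\nu\in G_n$ (here one uses the uniform continuity of $\psi$).

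The convergence of the kernels is then a dominated convergence argument, uniform in $\nu$. By (\ref{3.6}) the rescaled conjugated kernel converges to $A(\xi,\eta)$ pointwise, uniformly in $\nu\in F_n$ and in $\xi,\eta$ on compact sets. For the domination, note that $K_{n,S}^\nu$ is, up to conjugation, the Hermitian correlation kernel of the determinantal point process of the non-intersecting Brownian motions, so its diagonal is the first intensity, which by (\ref{3.5}) is at most $Ce^{-\xi}$ for $\xi\ge 0$ and $\nu\in F_n$; non-negativity of the two-point correlation function together with Hermitian symmetry gives $|\hat K(\xi,\eta)|^2\le\hat K(\xi,\xi)\hat K(\eta,\eta)\le C^2e^{-\xi-\eta}$ off the diagonal as well. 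Since $\phi\in C_c(\mathbb{R})$ has compact support, $|\phi^{1/2}(\xi)\hat L_n^\nu(\xi,\eta)\phi^{1/2}(\eta)|$ is then bounded by a fixed $L^2(\mathbb{R}^2)$ function, uniformly in $n$ and $\nu\in G_n$, so $\phi^{1/2}\hat L_n^\nu\phi^{1/2}\to\phi^{1/2}A\phi^{1/2}$ in Hilbert--Schmidt norm; a parallel estimate of the traces upgrades this to trace-norm convergence, uniformly in $\nu\in G_n$. Lemma \ref{lem2.3} now gives $\det(I-\phi^{1/2}\hat L_n^\nu\phi^{1/2})\to\det(I-\phi^{1/2}A\phi^{1/2})$ uniformly in $\nu\in G_n$, which proves (\ref{3.9}).

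For (\ref{3.10}) one argues in the same way, on a half-line instead of against a test function. For fixed $t$, on $G_n$ the probability that $(\mu_j-\gamma_n)/\delta n^{1/3}\le t$ for all $j$ equals a Fredholm determinant of $K_{n,S_n}^\nu$ on $L^2$ of a half-line, which after rescaling and conjugation becomes $\det(I-\hat L_n^\nu)_{L^2(t,\infty)}$; by (\ref{3.5}) its diagonal is dominated by $Ce^{-\xi}$, integrable on $(t,\infty)$, so the operator is trace class with a uniform bound, and by (\ref{3.6}) and dominated convergence it converges in trace norm to $A$ on $L^2(t,\infty)$. Lemma \ref{lem2.3} gives $\det(I-\hat L_n^\nu)_{L^2(t,\infty)}\to\det(I-A)_{L^2(t,\infty)}=F_{\text{TW}}(t)$ uniformly in $\nu\in G_n$, and since $\mathbb{P}_\nu[G_n^c]\to 0$ while these determinants lie in $[0,1]$, averaging over $\nu$ yields (\ref{3.10}).

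The main obstacle is the \emph{uniform}-in-$\nu$ trace-norm convergence of the kernels. Theorem \ref{thm3.1} provides only the diagonal bound (\ref{3.5}) and the pointwise limit (\ref{3.6}); to turn these into control of the Fredholm determinants one must first use the determinantal structure (non-negativity of correlations and Hermiticity) to bound $K_{n,S}^\nu$ away from the diagonal by its diagonal values, and then carry out the bookkeeping that matches the spectral variable of the rescaled eigenvalue process with the Airy variable of Theorem \ref{thm3.1} using only the $O(\epsilon_n)$ control furnished by (\ref{3.7}). Once this is in place the conclusion follows from Lemma \ref{lem2.3} exactly as in the bulk case.
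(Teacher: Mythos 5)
Your outline has the right architecture (reduce to $G_n$, rescale and conjugate the kernel, invoke Theorem \ref{thm3.1} and Lemma \ref{lem2.3}), and the treatment of \eqref{3.9} is essentially what the paper does, except that the off-diagonal domination machinery you invoke is unnecessary there: since $\phi$ has compact support, the cut-off operator lives on a fixed compact square, and \eqref{3.6} already gives \emph{uniform} convergence of the rescaled, conjugated kernel on compacts; combined with \eqref{3.13} and Lemma \ref{lem2.3} that is all one needs. No bound on the kernel away from the diagonal is required for \eqref{3.9}.

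The genuine problem is in your treatment of \eqref{3.10}. There your argument rests on the inequality $|\hat K(\xi,\eta)|^2\le \hat K(\xi,\xi)\hat K(\eta,\eta)$, which you deduce from Hermitian symmetry of the rescaled, conjugated kernel plus non-negativity of the two-point function. But the kernel $K_{n,S}^\nu$ of \eqref{2.1} is not Hermitian as written, and the particular conjugation $e^{(\eta-\xi)dS_n^{1/3}}$ appearing in \eqref{3.6} is chosen to produce the Airy-kernel limit, not to Hermitize; there is no reason it coincides with whatever $\nu$-dependent conjugation would symmetrize $K_{n,S}^\nu$. So the off-diagonal bound you need is not established, and with it fall the claimed uniform trace-class bound on $L^2(t,\infty)$ and the trace-norm convergence. (The auxiliary claim that Hilbert--Schmidt plus trace convergence upgrades to trace-norm convergence also presupposes positivity of these operators, which is the same unaddressed issue.)

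The paper avoids this entirely by a truncation: for $\tau>t$ it first proves the analogue of \eqref{3.9} for the gap probability on the \emph{bounded} window $(t,\tau)$, where only \eqref{3.6} and compactness are needed, and then bounds the discrepancy between the gap probability on $(t,\infty)$ and on $(t,\tau)$ by the expected number of particles beyond $\tau$, which by \eqref{3.5} is at most $C\int_{-\epsilon_n/\alpha_0+\delta\tau/\beta_0}^\infty e^{-\xi}\,d\xi$ — a quantity using only the \emph{diagonal} of the kernel. Letting $\tau\to\infty$ after $n\to\infty$ finishes the argument. This is strictly more elementary than what you propose: it never needs to know anything about $K_{n,S}^\nu(\xi,\eta)$ for $\xi\neq\eta$ outside a compact set, hence never needs Hermiticity or positivity. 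You should replace the direct trace-norm argument on the half-line by this truncation.
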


\begin{proof}
We see from (\ref{2.8}), with $\phi=1-e^{-\psi}$, and $(\ref{3.8})$ that to prove (\ref{3.9}) it is enough to show
that
\begin{equation}\label{3.11}
\lim_{n\to\infty}\mathbb{E}_{\nu}\left[1_{G_n}\mathbb{E}^{\nu}\left[
\exp(-\sum_{j=1}^n\psi((\mu_j-\gamma_n)/\delta n^{1/3}))\right]\right]
=\mathbb{E}_{\text{Airy}}\left[e^{-\sum_{j=1}^n\psi(\mu_j)}\right].
\end{equation}
Let
\begin{equation}
\tilde{K}_n^{\nu}(\xi,\eta)=dS_n^{1/3}e^{(\xi-\eta)dS_n^{1/3}}K_{n,S_n}^{\nu}(aS_n+\xi dS_n^{1/3},
aS_n+\eta dS_n^{1/3})
\notag
\end{equation}
and
\begin{equation}
\tilde{\phi}_n(\xi)=\phi(\xi d/\delta+(aS_n-\gamma_n)/\delta n^{1/3}).
\notag
\end{equation}
Then,
\begin{equation}\label{3.12}
\mathbb{E}^{\nu}\left[
\exp(-\sum_{j=1}^n\psi((\mu_j-\gamma_n)/\delta n^{1/3}))\right]=\det(I-\tilde{\phi}_n^{1/2}\tilde{K}_n^{\nu}
\tilde{\phi}_n^{1/2}).
\end{equation}
If $\nu\in G_n$ there is a constant $C$, depending on $\phi$, such that
\begin{equation}\label{3.13}
|\tilde{\phi}_n(\xi)-\phi(\xi)|\le C\epsilon_n.
\end{equation}
If we use (\ref{3.6}), (\ref{3.13}) and the fact that $\phi$ has compact support, we can use lemma 
\ref{lem2.3} to show that
\begin{equation}\label{3.14}
\lim_{n\to\infty}\det(I-\tilde{\phi}_n^{1/2}\tilde{K}_n^{\nu}
\tilde{\phi}_n^{1/2})=\det(I-\phi^{1/2}A\phi^{1/2}),
\end{equation}
uniformly for $\nu\in G_n$, where $A$ is the Airy kernel, (\ref{1.7}). The limit (\ref{3.9}) now follows from 
(\ref{2.8}), (\ref{3.6}), (\ref{3.11}), (\ref{3.12}) and (\ref{3.13}).

It remains to show (\ref{3.10}). Again, from (\ref{3.8}), we see that it is enough to show that
\begin{equation}\label{3.15}
\lim_{n\to\infty}\mathbb{E}_{\nu}[1_{G_n}\mathbb{E}^{\nu}[1_{\#(\gamma_n+\delta tn^{1/3},\infty)=0}]]=
F_{\text{TW}}(t),
\end{equation}
where $\#(x,y)=$ the number of points in $(x,y)$. Take $\tau>t$. Then,
\begin{equation}\label{3.16}
\lim_{n\to\infty}\mathbb{E}_{\nu}[1_{G_n}\mathbb{E}^{\nu}
[1_{\#(\gamma_n+\delta tn^{1/3},\gamma_n+\tau\delta n^{1/3})=0}]]=\det(I-A)_{L^2(t,\tau)}
\end{equation}
follows by an argument analogous to the one above used to prove (\ref{3.9}). Now,
\begin{align}\label{3.17}
\mathbb{E}^{\nu}[1_{\#(\gamma_n+\delta tn^{1/3},\infty)=0}]&=
\mathbb{E}^{\nu}[1_{\#(\gamma_n+\delta tn^{1/3},\gamma_n+\tau\delta n^{1/3})=0}]
\notag\\
&-
\mathbb{E}^{\nu}[1_{\#(\gamma_n+\delta \tau n^{1/3},\infty)\ge 1}
1_{\#(\gamma_n+\delta tn^{1/3},\gamma_n+\tau\delta n^{1/3})=0}].
\end{align}
The second term in the right hand side of (\ref{3.17}) is bounded by
\begin{align}\label{3.18}
&\mathbb{E}^{\nu}[1_{\#(\gamma_n+\delta \tau n^{1/3},\infty)\ge 1}]\le
\mathbb{E}^{\nu}[\#(\gamma_n+\delta \tau n^{1/3},\infty)]
\notag\\
&=\int_{\gamma_n+\delta \tau n^{1/3}}^\infty K_{n,S_n}^{\nu}(x,x)\,dx\le C\int_{(\gamma-aS_n)/dn^{1/3}+
\delta\tau/d}^\infty e^{-\xi}\,d\xi,
\end{align}
where the last inequality follows from (\ref{3.5}) if $\tau$ is sufficiently large, since then
\begin{equation}\label{3.19}
(\gamma-aS_n)/dn^{1/3}+\delta\tau/d\ge -\epsilon_n/\alpha_0+\delta\tau/\beta_0\ge 0.
\end{equation}
Hence, by (\ref{3.16}), (\ref{3.17}), (\ref{3.18}) and (\ref{3.19}),
\begin{align}\label{3.20}
&\limsup_{n\to\infty}\left|\mathbb{E}_{\nu}[1_{G-n}\mathbb{E}^{\nu}[\#(\gamma_n+t\delta tn^{1/3},\infty)]]-
\det(I-A)_{L^2(t,\infty)}\right|
\notag\\
&\le \left|\det(I-A)_{L^2(t,\infty)}-\det(I-A)_{L^2(t,\tau)}\right|+
C\int_{ -\epsilon_n/\alpha_0+\delta\tau/\beta_0}^\infty e^{-\xi}\,d\xi.
\end{align}
If we let $\tau\to\infty$ the right hand side of (\ref{3.20}) goes to zero and we have proved (\ref{3.10}).
\end{proof}

\subsection{Proof of edge universality}\label{sect3.2}

In this section we will prove theorem \ref{thm1.2} on edge universality for Gaussian divisible 
Hermitian Wigner matrices with finite fourth moments.

Let $\nu=y$, where $y=\{y_j\}$ are the eigenvalues of $X$. The expectation $\mathbb{E}_X$ on $X$ induces
an expectation $\mathbb{E}_{\nu}$ on $\nu$. By (\ref{2.19}),
\begin{align}
&\mathbb{E}_{W}\left[\exp(-\sum_{j=1}^n\psi((\lambda_j-\gamma_n)/\delta n^{1/3}))\right]
\notag\\
&=
\mathbb{E}_{\nu}\left[\mathbb{E}^{\nu}\left[
\exp(-\sum_{j=1}^n\psi((x_j(S_n)-\gamma_n)/\delta n^{1/3}))\right]\right].
\notag
\end{align}
By theorem \ref{thm3.2} it is enough to show that there is a choice of $\alpha_0$, $\beta_0$, $\gamma_n$,
$\epsilon_n$ and $\delta$, where $\epsilon_n\to 0$ as $n\to\infty$, so that (\ref{3.8}) holds with 
$G_n$ defined by (\ref{3.7}) and $F_n$ by (\ref{3.4}).

Let $u(x)=\frac 2{\pi}\sqrt{1-x^2}$ be the Wigner semi-circle law with support in $[-1,1]$. We can choose
$b_0>1/\kappa$ so that
\begin{equation}\label{3.21}
\int_{-1}^1\frac{\kappa u(x)}{(b_0\kappa-x)^2}\,dx=1,
\end{equation}
which gives $b_0=(1+2\kappa)(1+4\kappa)^{-1/2}$ by (\ref{2.21}). Let
\begin{equation}
\epsilon=\frac 13 \left(\frac{1+2\kappa}{\sqrt{1+4\kappa}}-1\right),
\notag
\end{equation}
so that $b_0\kappa\ge 1+3\epsilon$. We take $\gamma_n=n\sqrt{1+4\kappa}$ and note that
\begin{equation}\label{3.22}
\gamma_n=n\left(b_0\kappa+\int_{-1}^1\frac{\kappa u(x)}{b_0\kappa-x}\,dx\right).
\end{equation}
Also, we choose $\delta=\frac 12\sqrt{1+4\kappa}$ and note that
\begin{equation}\label{3.23}
\delta^3=\int_{-1}^1\frac{\kappa^3}{(b_0\kappa-x)^3}u(x)\,dx
\end{equation}
Furthermore, we take $\epsilon_n=(\log n)^{-1}$, $\alpha_0=\epsilon/\kappa$ and
$\beta_0=b_0+(1+2\epsilon)/\kappa$.

Define the function $\psi_\beta$ by
\begin{equation}
\psi_\beta(x)=
\begin{cases}
\frac{\kappa}{\kappa\beta-x}, &\text{if $|x|\le 1+\epsilon$} \\
0,                            &\text{if $|x|\ge 1+3\epsilon$,}
\end{cases}
\notag
\end{equation}
and for $1+\epsilon\le |x|\le 1+3\epsilon$ we define $\psi_\beta$ so that it becomes a $C^\infty$ function.

Set
\begin{equation}
H_n'=\{\nu\,;\,\max_{1\le j\le n}|\nu_j|\le 1+\epsilon\}.
\notag
\end{equation}
and
\begin{align}
H_n=&H_n'\cap\left\{\nu\,;\,\left|\sum_{j=1}^n\psi'_{2b_0}(\nu_j/n)-n\int_{-1}^1\psi'_{2b_0}(x)
u(x)\,dx\right|\le n^{1/6}\right.
\notag\\
&\left.\text{and}\,\,
\left|\sum_{j=1}^n\psi^{(j)}_{b_0}(\nu_j/n)-n\int_{-1}^1\psi^{(j)}_{b_0}(x)
u(x)\,dx\right|\le n^{1/6}\,\,\text{for $j=0,1,2$}\right\}.
\notag
\end{align}
We will prove the following lemma below.

\begin{lemma}\label{lem3.3} Let $H_n$ be defined as above. Then,
\begin{equation}\label{3.24}
\lim_{n\to\infty}\mathbb{P}_{\nu}[H_n]=1.
\end{equation}
\end{lemma}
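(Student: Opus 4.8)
The plan is to prove Lemma \ref{lem3.3} by showing that each of the defining conditions of $H_n$ holds with probability tending to $1$, and then intersecting finitely many such events. The key observation is that all the conditions have the same structure: they assert that a linear statistic $\sum_{j=1}^n\varphi(\nu_j/n)$, where $\varphi$ is one of the fixed $C^\infty_c$ functions $\psi_{2b_0}'$ or $\psi_{b_0}^{(j)}$, $j=0,1,2$, is close to its semicircle limit $n\int_{-1}^1\varphi(x)u(x)\,dx$ with error at most $n^{1/6}$. Since $\varphi$ is smooth and compactly supported, $\sum_{j=1}^n\varphi(\nu_j/n)=n\int\varphi\,d\mu_n$, where $\mu_n$ is the empirical spectral measure of $X$ (note $\nu_j=y_j$ here, the eigenvalues of $\sqrt{n}X$, rescaled by $n$). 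The standard route is to write this as $n\int\varphi\,d\mathbb{E}_X[\mu_n]$ plus a fluctuation term, estimate the bias $n|\int\varphi\,d\mathbb{E}_X[\mu_n]-\int\varphi\,u\,dx|$, and control the variance of the fluctuation.

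First I would handle the event $H_n'=\{\max_j|\nu_j|\le 1+\epsilon\}$, i.e. the largest eigenvalue of $X$ in absolute value does not exceed $1+\epsilon$ after the appropriate normalization. Under the finite fourth moment assumption (\ref{1.10}), this is exactly the content of the result of Bai--Yin type quoted in the introduction via \cite{Bai2}: finite fourth moment is necessary and sufficient for $\|\sqrt{n}X\|/n\to 1$ in probability (indeed the support of the semicircle here is $[-1,1]$), so $\mathbb{P}_{\nu}[H_n']\to 1$. Next, for each smooth compactly supported test function $\varphi$, I would express $\int\varphi\,d\mu_n$ in terms of the Stieltjes transform $m_n$ of Section \ref{sect2.2} via a Helffer--Sj\"ostrand-type representation (or directly), bound the bias using the pointwise convergence $\mathbb{E}_X[m_n(z)]\to m(z)$ from (\ref{2.21}) — this is exactly the kind of pointwise Stieltjes transform control adapted from \cite{Bai1} and \cite{BMT} that the introduction advertises — and bound the variance by Lemma \ref{lem2.5}, $\mathbb{E}_X[|m_n(z)-\mathbb{E}_X[m_n(z)]|^2]\le 2/(n|\im z|^2)$. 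The variance of $n\int\varphi\,d\mu_n$ is then $O(1)$ (in fact $O(\log n)$ after integrating a contour near the real axis, which is comfortably below $(n^{1/6})^2=n^{1/3}$), so Chebyshev gives that the fluctuation exceeds $n^{1/6}$ with vanishing probability. The bias term, being $o(n)\cdot o(1)$ more carefully estimated, is also $o(n^{1/6})$ for $n$ large, using that $\varphi$ is supported away from a fixed neighborhood of $[-1,1]^c$ — actually the relevant point is that $\varphi$ equals $\kappa/(\kappa\beta-x)$ on $|x|\le 1+\epsilon$ and $\beta\ge b_0$ with $b_0\kappa>1+3\epsilon$, so the integrand is analytic near the support of $u$ and the convergence rate is polynomial in $1/n$.

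The main obstacle is obtaining a bias estimate $|n\int\varphi\,d\mathbb{E}_X[\mu_n]-n\int\varphi\,u\,dx|=o(n^{1/6})$ under only a fourth moment hypothesis, since the classical rate-of-convergence results for $\mathbb{E}_X[m_n(z)]$ toward $m(z)$ typically require more moments or deteriorate badly as $\im z\to 0$. The resolution is that we do not need a rate uniform down to the real axis: the test functions $\varphi$ are analytic in a fixed complex neighborhood of $[-1,1]$ (because the poles $\kappa\beta-x$ of $\psi_\beta$ are at $x=\kappa\beta\ge\kappa b_0>1+3\epsilon$, and on $1+\epsilon\le|x|\le1+3\epsilon$ the functions were chosen $C^\infty$ with the smoothing also analytically extendable, or at least the Helffer--Sj\"ostrand weight can be taken supported in a region where $\mathbb{E}_X[m_n]-m$ has a usable bound), so a Helffer--Sj\"ostrand representation localizes the contour at a fixed distance $\eta_0>0$ from the real line. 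There, the self-consistent equation / resolvent perturbation argument of \cite{Bai1} and \cite{BMT} gives $|\mathbb{E}_X[m_n(z)]-m(z)|=O(1/n)$ using just the second (and a touch of the fourth) moment — this is the pointwise control referred to in the introduction. I would then assemble the pieces: $\mathbb{P}_{\nu}[H_n]\ge \mathbb{P}_{\nu}[H_n']-\sum 4$ terms each $o(1)$ by Chebyshev, hence $\mathbb{P}_{\nu}[H_n]\to 1$, which is (\ref{3.24}).
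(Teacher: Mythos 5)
Your overall structure matches the paper's: treat $H_n'$ via the Bai--Yin type bound from \cite{Bai2}, and for the remaining conditions write the deviation as a bias plus a fluctuation, control the bias by pointwise Stieltjes transform estimates, control the fluctuation by a variance bound, and conclude by Chebyshev. The bias discussion is also essentially right: the analyticity of the $\psi_\beta$ test functions near $[-1,1]$ lets you work on a contour a fixed distance off the real axis, and Bai-type self-consistent equation arguments give $|\mathbb{E}_X[m_n(z)]-m(z)|=O\bigl(1/(n|\im z|^5)\bigr)$ (this is the paper's Lemma \ref{lem3.6}).

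There is, however, a genuine gap in your variance step. You propose to bound the variance of the linear statistic using Lemma \ref{lem2.5}, $\mathbb{E}_X[|m_n(z)-\mathbb{E}_X[m_n(z)]|^2]\le 2/(n|\im z|^2)$, and assert that this makes $\operatorname{Var}\bigl(\sum_j\varphi(\nu_j/n)\bigr)=O(1)$ (or $O(\log n)$). That is not what Lemma \ref{lem2.5} delivers. Writing the fluctuation via a contour integral, $\sum_j\varphi(\nu_j/n)-\mathbb{E}_X\bigl[\sum_j\varphi(\nu_j/n)\bigr]=-\frac{n}{2\pi i}\oint_\gamma\varphi(z)\bigl(m_n(z)-\mathbb{E}_X[m_n(z)]\bigr)\,dz$, and applying Cauchy--Schwarz together with the bound $\sqrt{\operatorname{Var}m_n(z)}\le\sqrt{2}/(\sqrt{n}\,|\im z|)$ on a contour at distance $v=\eta_0$ from the real axis gives
\begin{equation}
\operatorname{Var}\left(\sum_{j=1}^n\varphi(\nu_j/n)\right)\;\le\;\frac{n^2}{4\pi^2}\Bigl(\oint_\gamma|\varphi(z)|\frac{\sqrt{2}}{\sqrt{n}\,\eta_0}|dz|\Bigr)^2\;=\;O(n),
\notag
\end{equation}
i.e.\ of order $n$, not of order $1$. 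Chebyshev at threshold $n^{1/6}$ would then give a bound $O(n)/n^{1/3}=O(n^{2/3})$, which diverges and is useless. Shrinking $v$ only makes this worse, since the Lemma \ref{lem2.5} bound blows up as $|\im z|^{-2}$.

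What rescues the argument — and where the fourth moment hypothesis is really used — is the refined variance bound of Lemma \ref{lem3.5}, $\mathbb{E}_X[|m_n(z)-\mathbb{E}_X[m_n(z)]|^2]\le C/(n^2|\im z|^4)$, which holds under (\ref{1.10}) and gains an extra factor of $n$. Together with Lemma \ref{lem3.6} and an entire approximation $\phi_A$ of $\phi$ (truncating the Fourier transform to support $[-(A{+}1),A{+}1]$, which is needed to have a literal contour representation and forces $v$ to shrink slowly so that $Av\lesssim1$), this gives $\mathbb{E}_X\bigl[(\sum_j\phi(\nu_j/n)-n\int\phi u)^2\bigr]\le Cn^{\epsilon_0}$ for any fixed $\epsilon_0\in(0,1)$ (this is Lemma \ref{lem3.4}). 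Taking $\epsilon_0=1/6$, Chebyshev gives $\mathbb{P}_\nu[|\cdots|\ge n^{1/6}]\le Cn^{1/6}/n^{1/3}=Cn^{-1/6}\to0$, which is exactly what the paper does. So you should replace the appeal to Lemma \ref{lem2.5} by the finer, fourth-moment-dependent variance estimate; as written, the variance step does not close.
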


Before we prove the lemma we will use it to show what we want by proving that $H_n\subseteq G_n$.

Let us first show that there is a constant C so that
\begin{equation}\label{3.25}
|b(\nu)-b_0|\le Cn^{-5/6}
\end{equation}
for all $\nu\in H_n$. We see from (\ref{3.1}), (\ref{3.21}) and the definition of $\psi_{b_0}$ that
\begin{align}\label{3.26}
&\left|\int_{-1}^1\psi'_{b_0}(x)u(x)\,dx-\frac 1n\sum_{j=1}^n\psi'_{b_0}(\nu_j/n)\right|
=\left|1-\sum_{j=1}^n\frac{S_n}{(b_0S_n-\nu_j)^2}\right|
\notag\\
&=S_n^2|b-b_0|\sum_{j=1}^n\frac{b_0S_n-\nu_j+bS_n-\nu_j}{(bS_n-\nu_j)^2(b_0S_n-\nu_j)^2}.
\end{align}
We want to show that $b\le 2b_0$. Since $\nu\in H_n$,
\begin{equation}
\frac 1n\sum_{j=1}^n\psi'_{2b_0}(\nu_j/n)\le \int_{-1}^1\psi'_{2b_0}(x)u(x)\,dx+n^{-5/6},
\notag
\end{equation}
which gives
\begin{equation}
\sum_{j=1}^n\frac{S_n}{(2b_0S_n-\nu_j)^2}\le\int_{-1}^1\frac{\kappa u(x)}{(2b_0\kappa-x)^2}\,dx
+n^{-5/6}<\int_{-1}^1\frac{\kappa u(x)}{(b_0\kappa-x)^2}\,dx=1
\notag
\end{equation}
if $n$ is sufficiently large. Hence $b\le 2b_0$. This gives
\begin{align}
&\sum_{j=1}^n\frac{(b_0S_n-\nu_j+bS_n-\nu_j)S_n^2}{(bS_n-\nu_j)^2(b_0S_n-\nu_j)^2}
\ge 2\sum_{j=1}^n\frac{(b_0S_n-\nu_j)^{1/2}(bS_n-\nu_j)^{1/2}S_n^2}{(bS_n-\nu_j)^2(b_0S_n-\nu_j)^2}
\notag\\
&\ge\sum_{j=1}^n\frac{2S_n^2}{(bS_n-\nu_j)^{3/2}(b_0S_n-\nu_j)^{3/2}}\ge
\frac{2\kappa^2}{(2b_0\kappa+1+\epsilon)^{3/2}(b_0\kappa+1+\epsilon)^{3/2}}\doteq c_1,
\notag
\end{align}
since $\nu_j/n\ge -1-\epsilon$ if $\nu\in H_n$. Thus (\ref{3.26}) implies
\begin{equation}
c_1|b-b_0|\le\left|\int_{-1}^1\psi'_{b_0}(x)u(x)\,dx-\frac 1n\sum_{j=1}^n\psi'_{b_0}(\nu_j/n)\right|
\le n^{-5/6}.
\notag
\end{equation}
This proves (\ref{3.25}).

Next, we show that for all $\nu\in H_n$,
\begin{equation}\label{3.27}
\left|\frac{a(\nu)S_n-\gamma_n}{n^{1/3}}\right|\le n^{-1/6}.
\end{equation}
Define,
\begin{equation}\label{3.28}
a_0=b_0+\sum_{j=1}^n\frac 1{b_0S_n-\nu_j}
\end{equation}
an approximate version of (\ref{3.2}). Then, by (\ref{3.1}),
\begin{align}
(a-a_0)S_n&=(b-b_0)S_n\left[1-\sum_{j=1}^n\frac{S_n}{(bS_n-\nu_j)(b_0S_n-\nu_j)}\right]
\notag\\
&=-(b-b_0)^2S_n^2\sum_{j=1}^n\frac{S_n}{(bS_n-\nu_j)^2(b_0S_n-\nu_j)}
\notag
\end{align}
Now,
\begin{equation}
b_0S_n-\nu_j\ge n(b_0\kappa-(1+\epsilon))\ge 2\epsilon n,
\notag
\end{equation}
which gives
\begin{equation}
|a-a_0|S_n\le\frac{\kappa^2 n}{2\epsilon}|b-b_0|^2\sum_{j=1}^n\frac{S_n}{(bS_n-\nu_j)^2}\le Cn^{-2/3}.
\notag
\end{equation}
by (\ref{3.25}) and (\ref{3.1}). From (\ref{3.25}) we also obtain
\begin{align}
|a_0S_n-\gamma_n|&=\left|\sum_{j=1}^n\frac{\kappa n}{b_0\kappa n-\nu_j}-n\int_{-1}^1\frac{\kappa u(x)}
{b_0\kappa-x}\,dx\right|
\notag\\
&=\left|\sum_{j=1}^n\psi_{b_0}(\nu_j/n)-n\int_{-1}^1\psi_{b_0}(x)u(x)\,dx\right|\le n^{1/6}
\notag
\end{align}
since $\nu\in H_n$. Hence,
\begin{equation}
\left|\frac{a(\nu)S_n-\gamma_n}{n^{1/3}}\right|\le \frac 1{n^{1/3}}|a-a_0|S_n+
\frac 1{n^{1/3}}|a_0S_n-\gamma_n|\le Cn^{-1/6}.
\notag
\end{equation}
This proves (\ref{3.27}).

If $n$ is so large that $\kappa|b-b_0|\le\epsilon$ for all $\nu\in H_n$, which we can achieve by (\ref{3.25}),
then using $|\nu_j/n|\le1+\epsilon$ we get
\begin{align}
\kappa(b-\nu_j/S_n)&=\kappa(b-b_0)+\kappa b_0-(1+\epsilon)+1+\epsilon-\nu_j/n
\notag\\
&\ge\kappa b_0-(1+\epsilon)+1+\epsilon-\nu_j/n-\kappa|b-b_0|\ge\epsilon,
\notag
\end{align}
so we have $b-\nu_j/S_n\ge\epsilon/\kappa\doteq\alpha_0$. Furthermore,
\begin{equation}
\kappa(b-\nu_j/S_n)=\kappa(b-b_0)+\kappa b_0-\nu_j/n\le\epsilon+\kappa b_0+1+\epsilon\doteq\kappa \beta_0.
\notag
\end{equation}
Thus, $H_n\subseteq F_n$ with these choices of $\alpha_0$ and $\beta_0$.

Finally, we want to control $|d(\nu)-\delta|$. By (\ref{3.3}) and (\ref{3.23}),
\begin{align}\label{3.29}
&d^3-\delta^3=\sum_{j=1}^n\frac{S_n^2}{(bS_n-\nu_j)^3}-\int_{-1}^1\frac{\kappa^3u(x)}{(b_0\kappa-x)^3}\,dx
\notag\\
&=\sum_{j=1}^n\left(\frac{S_n^2}{(bS_n-\nu_j)^3}-\frac{S_n^2}{(b_0S_n-\nu_j)^3}\right)
+\frac{\kappa^2}{2n}\left(\sum_{j=1}^n\psi_{b_0}''(\nu_j/n)-
\int_{-1}^1\psi_{b_0}''(x)u(x)\,dx\right)
\end{align}
Since $\nu\in H_n$,
\begin{equation}\label{3.30}
\frac{\kappa^2}{2n}\left|\sum_{j=1}^n\psi_{b_0}''(\nu_j/n)-
\int_{-1}^1\psi_{b_0}''(x)u(x)\,dx\right|\le\frac{\kappa^2}2 n^{-5/6}.
\end{equation}
Using $b-\nu_j/n\in[\alpha_0,\beta_0]$ and (\ref{3.25}) we see that
\begin{equation}\label{3.31}
\left|\sum_{j=1}^n\left(\frac{S_n^2}{(bS_n-\nu_j)^3}-\frac{S_n^2}{(b_0S_n-\nu_j)^3}\right)\right|
\le Cn^{-5/6}.
\end{equation}
Since $|d^3-\delta^3|\ge |d-\delta|\delta^2$, (\ref{3.29}), (\ref{3.30}) and (\ref{3.31}) give
$|d(\nu)-\delta|\le Cn^{-5/6}$. We see that $H_n\subseteq G_n$, which is what we wanted to prove.

It remains to prove lemma \ref{lem3.3}. For this we will use the following estimate.

\begin{lemma}\label{lem3.4}
Let $\{\nu_j\}$ be the eigenvalues of $\sqrt{n} X$, where $X$ is an Hermtian Wigner matrix with finite
fourth moments. Assume that $\phi\in C_0^\infty(\mathbb{R})$ is real-valued and let $\epsilon_0\in (0,1)$
be given. Then there is a constant $C$, depending on $\phi $ and $\epsilon_0$, so that
\begin{equation}\label{3.34}
\mathbb{E}_X\left[\left(\sum_{j=1}^n\phi(\nu_j/n)-n\int_{-1}^1\phi(x)u(x)\,dx\right)^2\right]\le
Cn^{\epsilon_0}.
\end{equation}
\end{lemma}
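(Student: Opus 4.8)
The plan is to express the linear eigenvalue statistic through the resolvent $m_n$ of $X/\sqrt n$ and then to bound separately the fluctuation of $m_n$ about its mean and the deviation of that mean from the semicircle, using the resolvent estimates for Hermitian Wigner matrices with four finite moments in the style of \cite{Bai1} and \cite{BMT}. First I would fix an almost analytic (Helffer--Sjöstrand) extension $\tilde\phi$ of $\phi$, supported in a bounded complex neighbourhood of $\mathrm{supp}\,\phi$ and satisfying $|\partial_{\bar z}\tilde\phi(z)|\le C_N|\im z|^N$ for an $N$ to be chosen large. Since $\phi\in C_0^\infty(\mathbb R)$, for every Hermitian matrix $\Tr\phi(X/\sqrt n)=\frac1\pi\int_{\mathbb C}\partial_{\bar z}\tilde\phi(z)\,\Tr(X/\sqrt n-z)^{-1}\,dA(z)$ with $dA$ Lebesgue measure on $\mathbb C$, and the same identity applied to the semicircle density $u$ gives
\[
\sum_{j=1}^n\phi(\nu_j/n)-n\int_{-1}^1\phi(x)u(x)\,dx=\frac{n}{\pi}\int_{\mathbb C}\partial_{\bar z}\tilde\phi(z)\,\bigl(m_n(z)-m(z)\bigr)\,dA(z),
\]
where $m_n$ is as in (\ref{2.20}) and $m(z)=\int_{-1}^1 u(x)/(x-z)\,dx$ is the limit of $\mathbb E_X[m_n(z)]$ (cf. (\ref{2.21})). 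Writing $m_n-m=(m_n-\mathbb E_X m_n)+(\mathbb E_X m_n-m)$ and $(a+b)^2\le 2a^2+2b^2$, it suffices to bound by $Cn^{\epsilon_0}$ the \emph{bias term} $n^2\bigl|\int\partial_{\bar z}\tilde\phi\,(\mathbb E_X m_n-m)\,dA\bigr|^2$ and the \emph{variance term} $n^2\,\mathbb E_X\bigl|\int\partial_{\bar z}\tilde\phi\,(m_n-\mathbb E_X m_n)\,dA\bigr|^2$.

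For the bias term I would invoke the pointwise estimate $|\mathbb E_X[m_n(z)]-m(z)|\le Cn^{-1+\epsilon_0/2}|\im z|^{-a}$ on $\mathrm{supp}\,\partial_{\bar z}\tilde\phi$, with $C,a$ depending only on the fourth-moment bound $K$; this is exactly the type of control of $\mathbb E_X[m_n]$ carried out in \cite{Bai1}, \cite{BMT}, obtained after a preliminary truncation of the entries of $X$ at a scale slightly below $\sqrt n$ (harmless by (\ref{1.10}), and the origin of the eventual $n^{\epsilon_0}$ loss). Taking $N>a$ makes $|\im z|^{N-a}$ integrable over the bounded support, so the bias term is $\le Cn^{\epsilon_0}$. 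For the variance term I would use Cauchy--Schwarz on the covariance, $\mathbb E_X\bigl|\int f\,(m_n-\mathbb E_X m_n)\,dA\bigr|^2\le\bigl(\int|f(z)|\,(\mathrm{Var}_X m_n(z))^{1/2}\,dA(z)\bigr)^2$, so it is enough to show $\mathrm{Var}_X(m_n(z))\le Cn^{-2+\epsilon_0}|\im z|^{-b}$ for some $b$. That I would obtain from the martingale-difference decomposition $nm_n(z)-\mathbb E_X[nm_n(z)]=\sum_{k=1}^nY_k(z)$ along the rows of $X$ used in \cite{BMT}, where $Y_k(z)=(\mathbb E_k-\mathbb E_{k-1})\Tr(X/\sqrt n-z)^{-1}$ and $\mathbb E_k$ conditions on the first $k$ rows: expanding $Y_k$ by the Schur-complement identity in terms of the resolvent $R^{(k)}$ of the $k$-th minor and controlling the quadratic forms $a_k^*R^{(k)}a_k-\tfrac1n\Tr R^{(k)}$ and $a_k^*(R^{(k)})^2a_k-\tfrac1n\Tr(R^{(k)})^2$ by the fluctuation bound $\mathbb E|a^*Ba-n^{-1}\Tr B|^2\le Cn^{-1}\|B\|_{\mathrm{HS}}^2$ — valid under a uniform fourth-moment bound on the entries — together with $\|R^{(k)}\|\le|\im z|^{-1}$ and $\|R^{(k)}\|_{\mathrm{HS}}^2\le n|\im z|^{-2}$, yields $\mathbb E_X|Y_k(z)|^2\le Cn^{-1+\epsilon_0}|\im z|^{-b}$, hence $\mathrm{Var}_X(nm_n(z))=\sum_k\mathbb E_X|Y_k(z)|^2\le Cn^{\epsilon_0}|\im z|^{-b}$. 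Choosing $N>\max(a,b/2)$, both planar integrals converge and both terms are $O(n^{\epsilon_0})$.

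The main obstacle is the variance bound under only a fourth-moment hypothesis. In \cite{JoUniv} a concentration-of-measure estimate provided much stronger (essentially exponential) control of $m_n$; here that is unavailable, so the $n^{-1}$ gain in $\mathbb E_X|Y_k(z)|^2$ must be extracted by hand from the quadratic-form fluctuation estimate, which is precisely where finiteness of the fourth moment enters, and one must track the polynomial dependence on $|\im z|$ carefully enough that it is absorbed by the high-order vanishing of $\partial_{\bar z}\tilde\phi$ near the real axis. The truncation step needed to put the entries on a controlled scale and to justify the resolvent manipulations under weak moments is what prevents the final bound from being $O(1)$ and forces it to be $Cn^{\epsilon_0}$; since $\epsilon_0\in(0,1)$ is arbitrary, this suffices for Lemma \ref{lem3.3}.
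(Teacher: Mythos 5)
Your argument is correct in substance but genuinely differs from the paper's at the level of the key analytic device. Both routes reduce the linear eigenvalue statistic to control of $m_n(z)-m(z)$ at height $|\im z|\asymp n^{-c}$, and both ultimately invoke the same two resolvent inputs, namely the variance bound (Lemma \ref{lem3.5}) and the bias bound (Lemma \ref{lem3.6}). What differs is how the test function $\phi$ is linked to the resolvent. The paper does \emph{not} use the Helffer--Sj\"ostrand formula: it truncates the Fourier transform of $\phi$ at scale $A$ to obtain an \emph{entire} function $\phi_A$, represents $\phi_A$ by a Cauchy integral over two horizontal lines at height $\pm v$, estimates the contribution of $\phi_A$ by plugging in (\ref{3.44}) on that single contour, and handles the residual $\phi-\phi_A$ via the sup-norm bound $|\phi-\phi_A|\le C_m/A^m$. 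The intertwined constraints on $A$, $v$ and the order $m$ are what force the $n^{\epsilon_0}$ loss. Your almost-analytic extension avoids the band-limiting residual entirely: the integration extends over the whole plane, but the region $|\im z|$ below the threshold where Lemma \ref{lem3.6} applies is suppressed by $|\partial_{\bar z}\tilde\phi|\le C_N|\im z|^N$ together with the trivial bound $|m_n-m|\le 2/|\im z|$. Executed carefully, your route would in fact give $O(1)$ rather than $O(n^{\epsilon_0})$, since $\int|\partial_{\bar z}\tilde\phi(z)|\,|\im z|^{-5}\,dA(z)<\infty$ for $N\ge 5$; the $n^{\epsilon_0}$ loss you build in, and your reference to a preliminary truncation of the matrix entries, are conservative and unnecessary — the paper's Lemmas \ref{lem3.5}--\ref{lem3.6} are established directly from the fourth-moment bound, without any entry truncation. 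Two small points of bookkeeping: the quadratic-form fluctuation estimate under a fourth-moment bound is $\mathbb E|a^*Ba-\tfrac{\sigma^2}{n}\Tr B|^2\le Cn^{-2}\|B\|_{\mathrm{HS}}^2$ (not $Cn^{-1}\|B\|_{\mathrm{HS}}^2$), consistent with (\ref{7}) after the normalization $a=\alpha_k/\sqrt n$; and your Cauchy--Schwarz step $\mathbb E|\int f(m_n-\mathbb E m_n)|^2\le(\int|f|(\mathrm{Var}\,m_n)^{1/2})^2$ is valid and is the right way to avoid having to estimate the two-point covariance $\mathrm{Cov}(m_n(z),m_n(w))$.
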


Before we prove lemma \ref{lem3.4} we will use it to prove lemma \ref{lem3.3}.

\begin{proof} ({\it of lemma \ref{lem3.3}}) It follows from theorem 2.12 in \cite{Bai2} that 
$\mathbb{P}[H_n']\to 1$ as $n\to\infty$. If $\phi=\psi_{2b_0}', \psi_{b_0}, \psi_{b_0}'$ or 
$\psi_{b_0}''$ then $\phi\in C_0^\infty(\mathbb{R})$ and lemma \ref{lem3.4} with $\epsilon_0=1/6$ gives
\begin{equation}
\mathbb{P}_{\nu}\left[\left|\sum_{j=1}^n\phi(\nu_j/n)-n\int_{-1}^1\phi(x)u(x)\,dx\right|\ge n^{1/6}\right]
\le \frac{Cn^{1/6}}{n^{1/3}},
\notag
\end{equation}
by Chebyshev's inequality. This proves lemma \ref{lem3.3}.
\end{proof}

\begin{proof} ({\it of lemma \ref{3.4}}). Pick $A>0$. There is a function $\psi_A\in C_0^\infty(\mathbb{R})$
such that $0\le \psi_A\le 1$, $\psi_A(x)=1$ if $|x|\le A$, $\text{supp}\,\psi_A\subseteq[-(A+1),A+1]$ and
$|\psi_A^{(r)}(x)|\le c_m$ for all $x\in\mathbb{R}$, $0\le r\le m$, where the constant $c_m$ is independent of $A$.
For $z\in\mathbb{C}$ we define
\begin{equation}\label{3.35}
\phi_A(z)=\frac 1{2\pi}\int_{-\infty}^\infty\psi_A(\xi)\hat{\phi}(\xi)e^{i\xi x}\,d\xi,
\end{equation}
which is an entire function of $z$. Here,
\begin{equation}\label{3.36}
\hat{\phi}(\xi)=\int_{-\infty}^\infty e^{-i\xi x}\phi(x)\,dx
\end{equation}
is the Fourier transform of $\phi$. The function $\phi_A$ has the following properties. There is a constant
$C$ independent of $A$ so that
\begin{equation}\label{3.37}
|\phi_A(z)|\le\frac{Ce^{(A+1)|\im z|}}{|z|^2}
\end{equation}
if $z\neq 0$, and
\begin{equation}\label{3.38}
|\phi_A(z)|\le Ce^{(A+1)|\im z|}
\end{equation}
for all $z$. Furthermore, given $m\ge 1$, there is a constant $C_m$ so that
\begin{equation}\label{3.39}
|\phi(x)-\phi_A(x)|\le \frac{C_m}{A^m}
\end{equation}
for all $x\in\mathbb{R}$. The inequality (\ref{3.38}) follows immediately from (\ref{3.35}) and 
$|\hat{\phi}|\le ||\phi||_\infty$. Integration by parts gives
\begin{equation}
\phi_A(z)=\frac 1{2\pi(iz)^2}\int_{-\infty}^\infty \frac{d^2}{d\xi^2}(\psi_A(\xi)\hat{\phi}(\xi))e^{i\xi x}\,dx.
\notag
\end{equation}
The properties of $\psi_A$ and suitable estimates of $\hat{\phi}$ and its derivatives obtained from (\ref{3.36})
using integration by parts, now gives (\ref{3.37}). The estimate (\ref{3.39}) is also easy to prove using
integration by parts.

Let $\gamma_\pm$ be given by $t\to\mp t\pm iv$, $t\in\mathbb{R}$, where $v>0$ is fixed, and let $\gamma=\gamma_+
+\gamma_-$. Cauchy's integral formula and the estimate (\ref{3.37}) show that we can represent $\phi_A$ by
\begin{equation}\label{3.40}
\phi_A(x)=\frac 1{2\pi i}\int_{\gamma}\frac{\phi_A(z)}{z-x}\,dz.
\end{equation}
We now turn to the proof of (\ref{3.34}). Write $r_A=\phi-\phi_A$. Then,
\begin{align}\label{3.41}
&\mathbb{E}_X\left[\left(\sum_{j=1}^n\phi(\nu_j/n)-n\int_{-1}^1\phi(x)u(x)\,dx\right)^2\right]^{1/2}
\notag\\
&\le \mathbb{E}_X\left[\left(\sum_{j=1}^n\phi_A(\nu_j/n)-n\int_{-1}^1\phi_A(x)u(x)\,dx\right)^2\right]^{1/2}
\notag\\
&+\mathbb{E}_X\left[\left(\sum_{j=1}^n r_A(\nu_j/n)-n\int_{-1}^1r_A(x)u(x)\,dx\right)^2\right]^{1/2}.
\end{align}
The second term in the right hand side of (\ref{3.41}) is $\le nC_m/A^m$ by (\ref{3.39}), which is 
$\le n^{\epsilon_0/2}$ if $A=(C_mn^{1-\epsilon_0/2})^{1/m}$. In order to estimate the first term we need
the following two lemmas.

\begin{lemma}\label{lem3.5}
Assume that $X$ is an Hermitian Wigner matrix with finite fourth moments and $m_n$ is given by (\ref{2.20}).
Then there is a constant $C$ so that
\begin{equation}\label{3.42}
\mathbb{E}_X[|m_n(z)-\mathbb{E}_X[m_n(z)]|^2]\le \frac C{n^2|\im z|^4}
\end{equation}
for all $z$ with $\im z\neq 0$.
\end{lemma}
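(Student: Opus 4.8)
The plan is to combine a martingale-difference decomposition along the successive rows and columns of $X$ with the Schur-complement (rank-one) formula for the resolvent and the classical second-moment estimate for quadratic forms under a uniform fourth-moment bound. Write $G=G(z)=(X/\sqrt{n}-z)^{-1}$, so that $\Tr G=nm_n(z)$; let $\mathbb{E}_k$ denote conditional expectation given the first $k$ rows and columns of $X$, and let $G^{(k)}=(X^{(k)}/\sqrt{n}-z)^{-1}$ be the resolvent of the matrix obtained by deleting row and column $k$. Because $G^{(k)}$ does not involve the $k$-th row and column, $(\mathbb{E}_k-\mathbb{E}_{k-1})\Tr G^{(k)}=0$, so for $\gamma_k:=(\mathbb{E}_k-\mathbb{E}_{k-1})\Tr G$ we have $\gamma_k=(\mathbb{E}_k-\mathbb{E}_{k-1})\zeta_k$ with
\begin{equation}
\zeta_k:=\Tr G-\Tr G^{(k)}=\frac{1+b_k^*(G^{(k)})^2b_k}{x_{kk}/\sqrt{n}-z-b_k^*G^{(k)}b_k}=:\frac{\alpha_k}{D_k},
\notag
\end{equation}
where $b_k$ is the $k$-th column of $X/\sqrt{n}$ with its diagonal entry removed and $G_{kk}=1/D_k$. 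By orthogonality of martingale differences, $n^2\,\mathbb{E}_X[|m_n(z)-\mathbb{E}_X m_n(z)|^2]=\sum_{k=1}^n\mathbb{E}_X[|\gamma_k|^2]$, so it suffices to show $\mathbb{E}_X[|\gamma_k|^2]\le C/(n|\im z|^4)$ for each $k$.

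Let $\mathcal{F}^{(k)}$ be the $\sigma$-field generated by all rows and columns other than the $k$-th, and let $\zeta_k^{\circ}=\alpha_k^{\circ}/D_k^{\circ}$ be obtained from $\zeta_k$ by replacing $b_k^*G^{(k)}b_k$ and $b_k^*(G^{(k)})^2b_k$ by their conditional means $\frac{\sigma^2}{n}\Tr G^{(k)}$ and $\frac{\sigma^2}{n}\Tr(G^{(k)})^2$ and dropping $x_{kk}/\sqrt{n}$; thus $\zeta_k^{\circ}$ is $\mathcal{F}^{(k)}$-measurable. As $\mathbb{E}_k-\mathbb{E}_{k-1}$ annihilates $\mathcal{F}^{(k)}$-measurable variables and is an $L^2$-contraction, $\mathbb{E}_X[|\gamma_k|^2]\le\mathbb{E}_X[|\zeta_k-\zeta_k^{\circ}|^2]$, and writing $\zeta_k-\zeta_k^{\circ}$ in terms of $D_k-D_k^{\circ}$ and $\alpha_k-\alpha_k^{\circ}$ gives
\begin{equation}
\zeta_k-\zeta_k^{\circ}=\frac{\zeta_k}{D_k^{\circ}}\Bigl(b_k^*G^{(k)}b_k-\tfrac{\sigma^2}{n}\Tr G^{(k)}-\tfrac{x_{kk}}{\sqrt{n}}\Bigr)+\frac{1}{D_k^{\circ}}\Bigl(b_k^*(G^{(k)})^2b_k-\tfrac{\sigma^2}{n}\Tr(G^{(k)})^2\Bigr).
\notag
\end{equation}

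Now two deterministic facts are used. First, $|\zeta_k|\le|\im z|^{-1}$: since $z\mapsto G_{kk}$ is Herglotz and $\zeta_k=\partial_z\log G_{kk}$, this follows from Cauchy--Schwarz (equivalently, from the interlacing of the eigenvalues of $G^{(k)}$ in those of $G$). Second, $|D_k^{\circ}|\ge|\im D_k^{\circ}|=|\im z|\bigl(1+\tfrac{\sigma^2}{n}\|G^{(k)}\|_2^2\bigr)$, because $\im\Tr G^{(k)}=\im z\,\|G^{(k)}\|_2^2$. Conditionally on $\mathcal{F}^{(k)}$, the classical quadratic-form variance bound gives $\mathbb{E}_X[|b_k^*Mb_k-\tfrac{\sigma^2}{n}\Tr M|^2\mid\mathcal{F}^{(k)}]\le\frac{C}{n^2}\|M\|_2^2$ for $M$ depending only on $\mathcal{F}^{(k)}$, using only $\mathbb{E}_X[|x_{ij}|^4]\le K$ (the diagonal part of $M$ also contributing $O(n^{-2}\|M\|_2^2)$). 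Applying this with $M=G^{(k)}$ and $M=(G^{(k)})^2$, using $\|(G^{(k)})^2\|_2^2\le|\im z|^{-2}\|G^{(k)}\|_2^2$, $\mathbb{E}_X[|x_{kk}|^2]\le\sigma^2$, and the elementary inequality $x(1+x)^{-2}\le\frac14$ with $x=\tfrac{\sigma^2}{n}\|G^{(k)}\|_2^2$, one obtains $\mathbb{E}_X[|\zeta_k-\zeta_k^{\circ}|^2\mid\mathcal{F}^{(k)}]\le C/(n|\im z|^4)$ with $C$ depending only on $K$ and $\sigma^2$; summing over $k$ completes the proof.

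The main obstacle is the fourth-moment restriction. Higher moments of the $x_{ij}$ are not assumed finite, so $\alpha_k=1+b_k^*(G^{(k)})^2b_k$ (and even $\|b_k\|^2$) cannot be estimated in any $L^p$, $p>2$, in the entries; the ``large'' prefactor in $\zeta_k-\zeta_k^{\circ}$ must therefore be controlled by the deterministic bound $|\zeta_k|\le|\im z|^{-1}$, and the quadratic-form estimate may be used only once per factor, at the level of second moments of the form. A second delicate point is that the naive bound $|D_k^{\circ}|\ge|\im z|$ alone would give only $C/(n|\im z|^6)$; to recover the power $|\im z|^{-4}$ one must cancel the factor $\|G^{(k)}\|_2^2$ produced by the quadratic-form variances against the same factor in $|D_k^{\circ}|\ge|\im z|(1+\tfrac{\sigma^2}{n}\|G^{(k)}\|_2^2)$, using $x(1+x)^{-2}\le\frac14$.
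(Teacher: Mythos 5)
Your proof is correct and follows essentially the same route as the paper: a martingale-difference decomposition of $\Tr G$ over rows/columns, the Schur-complement identity for $\Tr G-\Tr G^{(k)}$, replacement of the quadratic forms by their conditional means, the deterministic bounds $|\zeta_k|\le|\im z|^{-1}$ and $|\im D_k^{\circ}|=|\im z|(1+\tfrac{\sigma^2}{n}\|G^{(k)}\|_2^2)$, the fourth-moment quadratic-form variance estimate, and the cancellation of $\|G^{(k)}\|_2^2$ against the same quantity in $\im D_k^{\circ}$ to reach the power $|\im z|^{-4}$. These are exactly the paper's ingredients (identities (\ref{1})--(\ref{2}), Lemma \ref{lem3.9}, and the orthogonality/contraction argument around (\ref{9})--(\ref{10})), only stated in slightly different notation.
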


\begin{lemma}\label{lem3.6}
Let $r_n$ be a sequence of positive numbers tending to zero. 
Then there is a constant $C$ such that if $n$ is sufficiently large,
\begin{equation}\label{3.43}
|\mathbb{E}_X[m_n(z)]-m(z)|\le \frac C{n|\im z|^5}
\end{equation}
for all $z$ such that $(nr_n)^{-1/5}\le |\im z|\le 1$. Here $m(z)$ is given by (\ref{2.21}).
\end{lemma}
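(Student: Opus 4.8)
Write $s_n(z)=\mathbb{E}_X[m_n(z)]$ and $\eta=|\im z|$; by the symmetry $m_n(\bar z)=\overline{m_n(z)}$, and likewise for $m$, we may take $\im z=\eta>0$. With the normalisation $\sigma^2=1/4$ the function $m$ in (\ref{2.21}) is the solution with $\im m/\im z>0$ of $\tfrac14 m(z)^2+zm(z)+1=0$, i.e.\ $m(z)=-\bigl(z+\tfrac14 m(z)\bigr)^{-1}$. Following the approach of \cite{Bai1} and \cite{BMT}, the plan is to show that $s_n$ satisfies the \emph{same} fixed-point equation up to a small error,
\begin{equation}
s_n(z)+\frac{1}{z+\tfrac14 s_n(z)}=\Delta_n(z),\qquad |\Delta_n(z)|\le\frac{C}{n\eta^{4}},
\notag
\end{equation}
and then to solve for $s_n-m$ by stability. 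Here and below $C$ depends only on the fourth-moment bound $K$, and the whole argument runs inside a bootstrap assuming $|s_n(\re z+i\eta')-m(\re z+i\eta')|\le\tfrac12$ for all $\eta\le\eta'\le1$.

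For the self-consistent equation I would use the Schur-complement identity for the diagonal resolvent entries. Let $R=(X/\sqrt n-z)^{-1}$, let $G^{(i)}$ be the resolvent of the minor obtained by deleting the $i$-th row and column, and let $\mathbf a_i$ be the $i$-th column of $X/\sqrt n$ with the $i$-th entry removed; then $R_{ii}=\bigl((X/\sqrt n)_{ii}-z-\mathbf a_i^*G^{(i)}\mathbf a_i\bigr)^{-1}$, which rearranges to
\begin{equation}
R_{ii}=\frac{-1}{z+\tfrac14 s_n+\varepsilon_i},\qquad
\varepsilon_i=\tfrac14(m_n-s_n)+\tfrac14\Bigl(\tfrac1n\Tr G^{(i)}-m_n\Bigr)+\zeta_i-\frac{x_{ii}}{\sqrt n},
\notag
\end{equation}
with $\zeta_i=\mathbf a_i^*G^{(i)}\mathbf a_i-\tfrac1{4n}\Tr G^{(i)}$; here I used $\mathbb{E}|x_{ij}|^2=\sigma^2=\tfrac14$ and $\mathbb{E}[x_{ij}^2]=0$, so that $\mathbb{E}[\mathbf a_i^*G^{(i)}\mathbf a_i\mid X^{(i)}]=\tfrac1{4n}\Tr G^{(i)}$. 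The four pieces of $\varepsilon_i$ are controlled by: Lemma \ref{lem3.5}, giving $\mathbb{E}|m_n-s_n|^2\le C/(n^2\eta^4)$; the Cauchy interlacing bound $|\tfrac1n\Tr G^{(i)}-m_n|\le C/(n\eta)$ for removing a row and column; the finite-fourth-moment variance estimate $\mathbb{E}\bigl[|\zeta_i|^2\mid X^{(i)}\bigr]\le\tfrac{C}{n^2}\Tr\bigl(G^{(i)}(G^{(i)})^*\bigr)=\tfrac{C}{n\eta}\,\im\tfrac1n\Tr G^{(i)}$ (via the Ward identity; only the diagonal contribution differs from the Gaussian case and it is weighted by $\mathbb{E}|x_{ij}|^4\le K$) together with the bootstrap-consequence $\mathbb{E}[\im m_n(z)]\le\im m(z)+\tfrac12\le C$; and finally $\mathbb{E}[x_{ii}]=0$, $\mathbb{E}|x_{ii}|^2=\sigma^2$. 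I will also use $|z+\tfrac14 s_n|\ge\eta$ (since $\im s_n>0$) and $1/|R_{ii}|=|z+\tfrac14 s_n+\varepsilon_i|\ge\eta$.

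The key step is a \emph{second-order} expansion, so as not to lose the vanishing of $\mathbb{E}[x_{ii}]$ and $\mathbb{E}[\zeta_i\mid X^{(i)}]$:
\begin{equation}
\mathbb{E}[R_{ii}]=\frac{-1}{z+\tfrac14 s_n}+\frac{\mathbb{E}[\varepsilon_i]}{(z+\tfrac14 s_n)^2}+O\!\left(\frac{\mathbb{E}|\varepsilon_i|^2}{\eta^3}\right).
\notag
\end{equation}
Only the interlacing term survives in $\mathbb{E}[\varepsilon_i]$, so $|\mathbb{E}[\varepsilon_i]|\le C/(n\eta)$, while $\mathbb{E}|\varepsilon_i|^2\le C/(n\eta)$ because in the range $\eta\ge n^{-1/3}$ (a fortiori for $\eta\ge(nr_n)^{-1/5}$) the term $\mathbb{E}|\zeta_i|^2\le C/(n\eta)$ dominates both $1/(n^2\eta^4)$ and $\sigma^2/n$. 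Averaging over $i$ gives $s_n+(z+\tfrac14 s_n)^{-1}=\Delta_n$ with $|\Delta_n|\le C/(n\eta^4)$. Subtracting $m+(z+\tfrac14 m)^{-1}=0$, and using $(z+\tfrac14 m)^{-1}=-m$ and $(z+\tfrac14 s_n)^{-1}=-s_n+\Delta_n$, yields $(s_n-m)(1-\tfrac14 ms_n)=\Delta_n(1-\tfrac14 m(s_n-m))$. The multiplier of the fixed-point map at $m$ is $\tfrac14 m(z)^2$, and a short computation from the quadratic gives $1-\tfrac14 m(z)^2=2\sqrt{z^2-1}/(z+\sqrt{z^2-1})$; since $\mathrm{dist}(z,\{\pm1\})\ge\eta$ and $|m|$ is bounded for $\eta\le1$, this is $\ge c\sqrt\eta$. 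Hence, as soon as $|s_n-m|\le\tfrac c4\sqrt\eta$ (which must be checked, given only $|s_n-m|\le\tfrac12$ from the bootstrap), $|1-\tfrac14 ms_n|\ge\tfrac c2\sqrt\eta$ and therefore
\begin{equation}
|s_n(z)-m(z)|\le\frac{C|\Delta_n(z)|}{\sqrt\eta}\le\frac{C}{n\eta^{9/2}}\le\frac{C}{n\eta^{5}}\qquad(\eta\le1).
\notag
\end{equation}
This closes the bootstrap, since $\eta^{9/2}\ge\eta^{5}\ge(nr_n)^{-1}$ gives $C/(n\eta^{9/2})\le Cr_n\to0$; the estimate is trivial at $\eta=1$, and continuity of $\eta\mapsto s_n-m$ propagates the smallness down to $\eta=(nr_n)^{-1/5}$. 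This is precisely where the hypothesis $|\im z|\ge(nr_n)^{-1/5}$ enters: the bootstrap needs $n\eta^{5}\to\infty$.

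I expect the main obstacle to be the bookkeeping of powers of $\eta=|\im z|$ near the spectral edges $\pm1$, where the fixed-point map degenerates ($1-\tfrac14 m^2\to0$, so perturbations are amplified by $|1-\tfrac14 m^2|^{-1}\asymp\eta^{-1/2}$). To keep the final exponent at $5$ one must use the full strength of Lemma \ref{lem3.5} and of the sharp quadratic-form variance bound (the factor $\im\tfrac1n\Tr G^{(i)}$ rather than the crude $1/\eta$), and the continuity/bootstrap step has to be calibrated to the range $(nr_n)^{-1/5}\le|\im z|\le1$ rather than run on a fixed strip.
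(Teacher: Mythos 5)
Your proof is correct in substance but takes a genuinely different route from the paper's, so it is worth comparing them. Both arguments first derive the approximate self-consistent equation $\mathbb{E}[m_n(z)]+\bigl(z+\sigma^2\mathbb{E}[m_n(z)]\bigr)^{-1}=O\!\bigl(1/(n|\im z|^4)\bigr)$, and the underlying computations are closely parallel (the paper decomposes $\Tr D$ using $\beta_k^*$ and a martingale-difference/conditional-expectation structure; you do an equivalent second-order Taylor expansion of $R_{ii}$ around $-1/(z+\sigma^2 s_n)$ and average). The divergence is in the last step. The paper solves the perturbed quadratic \emph{explicitly} for $\mathbb{E}[m_n]$, writes $m$ by the same formula with $\delta=0$, and bounds the difference of the two square roots directly, using the uniform lower bound $\im\sqrt{(\,\cdot\,)^2-1}\ge|\im z|$ for the branch with positive imaginary part. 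This makes the argument \emph{unconditional}: no a priori control on $\mathbb{E}[m_n]$ and no bootstrap are needed. You instead subtract the exact and perturbed fixed-point equations, bound the stability factor $1-\sigma^2 m\,\mathbb{E}[m_n]$ from below by $c\sqrt{|\im z|}$ using the square-root degeneracy of $m$ near the edges $\pm1$, and close with a continuity/bootstrap argument in $\im z$. Your route yields the slightly sharper bound $C/(n|\im z|^{9/2})$ because $\sqrt{|\im z|}$ is a sharper lower bound for $|\sqrt{z^2-1}|$ than the paper's $|\im z|$; the paper's is cleaner because it avoids any bootstrap and is therefore shorter. Your approach would generalize better to settings where the self-consistent equation is not an explicitly solvable quadratic.

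Two points in your write-up need tightening. First, as you yourself flag, the bootstrap hypothesis must be $|s_n-m|\le \tfrac{c}{4}\sqrt{|\im z|}$ rather than the cruder $|s_n-m|\le\tfrac12$; the finer threshold is what makes $|1-\tfrac14 m s_n|\ge\tfrac{c}{2}\sqrt{|\im z|}$ available, and it does close since $n|\im z|^5\ge 1/r_n\to\infty$ on the stated range. Second, the claim that ``the estimate is trivial at $\eta=1$'' is not quite right: at $\im z=1$ one only has the a priori bound $|s_n|,|m|\le1$, so $|s_n-m|\le2$, which need not satisfy the bootstrap threshold. The standard fix is to start the continuity argument at $\im z=\eta_0$ sufficiently large (where $|s_n|,|m|\le1/\eta_0$ makes the threshold automatic) and propagate downward through $[\,(nr_n)^{-1/5},\eta_0\,]$. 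With these repairs the argument is complete.
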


These two lemmas can be extracted from \cite{BMT} (lemma 2.5) and \cite{Bai1}, but for completeness and 
convenience we give somewhat streamlined proofs in section \ref{sect3.4}.

Combining (\ref{3.42}) and (\ref{3.43}) we get
\begin{equation}\label{3.44}
\mathbb{E}_X[|m_n(z)-m(z)|^2]^{1/2}\le \frac C{n|\im z|^5}
\end{equation}
if $(nr_n)^{-1/5}\le |\im z|\le 1$. Now, by (\ref{3.40}),
\begin{align}
&\mathbb{E}_X\left[\left(\sum_{j=1}^n\phi_A(\nu_j/n)-n\int_{-1}^1\phi_A(x)u(x)\,dx\right)^2\right]
\notag\\
&\le\frac{n^2}{4\pi^2}\int_\gamma\,|dz|\int_\gamma\,|dw||\phi_A(z)||\phi_A(w)|
\mathbb{E}_X[|m_n(z)-m(z)|^2]^{1/2}\mathbb{E}_X[|m_n(w)-m(w)|^2]^{1/2}
\notag\\
&\le\frac C{v^{10}}e^{2A}
\notag
\end{align}
by (\ref{3.37}), (\ref{3.38}) and (\ref{3.44}) provided 
$(nr_n)^{-1/5}\le v\le 1$. Hence, the first term in the right hand side of (\ref{3.41}) is
$\le Cv^{-5}e^{Av}$. We need $Av\le 1$, which gives $v\le 1/A=(C_mn^{1-\epsilon_0/2})^{-1/m}$. Also,
we need $v^{-5}\le n^{\epsilon_0/2}$, i.e. $v\ge n^{-5\epsilon_0/2}$. Take $v=n^{-\delta_0}$, where
$\delta_0=\min(1/10,\epsilon_0)$, $r_n=n^{-1/2}$ and $m$ so large that $m\delta_0\ge 1$. Then all the
required inequalities are satified and we have proved \ref{lem3.4}.
\end{proof}

\subsection{The correlation kernel at the edge}\label{sect3.3}

In this section we will prove theorem \ref{thm3.1}.
Let
\begin{equation}\label{3.100}
f(z)=\frac {z^2}2-az+\frac 1S\sum_{j=1}^n\log (Sz-\nu_j).
\end{equation}
Then, by (\ref{2.33:2}),
\begin{equation}\label{3.101}
K_{n,S}^\nu (aS+u,aS+v)=\frac 1{(2\pi i)^2}\int_{\gamma_L}\,dz\int_{\Gamma_M}\,dw
\frac{e^{-vw+uz}}{w-z}e^{S(f(w)-f(z))}.
\end{equation}
Note that $a$ and $b$ are chosen so that $f'(b)=f''(b)=0$. We can now argue as in section \ref{sect2.3}
in order to find good contours. Define $g(t)=\re f(x(t)+iy(t))$, where $x(0)=b$, $y(0)=0$. Then
\begin{align}\label{3.102}
&g'=xx'-yy'-ax'+\sum_{j=1}^n\frac{S(xx'+yy')-\nu_jx'}
{(Sx-\nu_j)^2+S^2y^2}
\notag\\
&=\sum_{j=1}^n\left[\frac{S(xx'-yy')-2bSx'+x'\nu_j}{(bS-\nu_j)^2}+
\frac{S(xx''+yy'-\nu_jx'}
{(Sx-\nu_j)^2+S^2y^2}\right],
\end{align}
where we have used (\ref{3.1}) and (\ref{3.2}) in the second equality.
If we write the expression in the last sum in (\ref{3.102}) as on fraction the numerator becomes
\begin{align}
&S^2(-x^2x'+2yy'x+y^2x'+2bxx'-2byy'-b^2x')\nu_j
\notag\\
&+S^3((xx'-yy'-2bx')(x^2+y^2)+b^2(xx'+yy')).
\notag
\end{align}
We want to choose $x(t)+iy(t)$ so that the expression in the numerator is independent of $\nu_j$ which gives
the equation
\begin{equation}
-\frac 13x^3+y^2x+bx^2-by^2-bx=C.
\notag
\end{equation}
Since $x(0)=b$, $y(0)=0$ we see that $C=-b^3/3$, and we obtain
\begin{equation}
y^2(x-b)=\frac 13(x-b)^3.
\notag
\end{equation}
We see that $x(t)=b$ is one possibility and $y(t)=\pm\frac 1{\sqrt{3}}(x(t)-b)$ another. The choice
$x(t)=b$, $y(t)=t$ gives
\begin{equation}\label{3.103}
g'(t)=-\sum_{j=1}^n\frac{S^3t^3}{(bS-\nu_j)^2((bS-\nu_j)^2+S^2t^2)}
\end{equation}
and the choice $x(t)=t$, $y(t)=\pm\frac 1{\sqrt{3}}(t-b)$ gives
\begin{equation}\label{3.104}
g'(t)=-\sum_{j=1}^n\frac{S^3t^3}{(bS-\nu_j)^2((bS-\nu_j+St)^2+S^2t^2/3)}.
\end{equation}

This leads us to the following choice of contours. Let $\gamma$ be given by $z(t)$, where
\begin{equation}\label{3.105}
z(t)=\begin{cases} b+te^{\pi i/6}, & t\le 0 \\
b+te^{5\pi i/6}, & t\ge 0,
\end{cases}
\end{equation}
and let $\Gamma$ be given by $w(s)=b+is$, $s\in\mathbb{R}$. We can deform the contour $\gamma_L$ in
(\ref{3.101}) to $\gamma$ and $\Gamma_M$ to $\Gamma$.

From (\ref{3.1}), (\ref{3.103}) and $\nu\in F_n$ we see that for $t\ge 0$
\begin{equation}
g'(t)\le -\sum_{j=1}^n\frac{St^3}{(bS-\nu_j)^2(\beta_0^2+t^2)}=-\frac{t^3}{\beta_0^2+t^2}
\notag
\end{equation}
and similarly for $t\le 0$,
\begin{equation}
g'(t)\ge -\frac{t^3}{\beta_0^2+t^2}.
\notag
\end{equation}
From this it follows that
\begin{equation}\label{3.106}
\re f(w(s))-f(b)\le\begin{cases} -s^4/8\beta_0^2 & \text{for $0\le |s|\le\beta_0$}\\
(\beta_0^2-2s^2)/8 & \text{for $|s|\ge\beta_0$}.
\end{cases}
\end{equation}
Using the fact that $(Sb-\nu_j+St)^2\le 2\beta_0^2 S^2+2S^2t^2$ we get in a similar way from (\ref{3.104}), that
\begin{equation}\label{3.107}
f(b)-\re f(z(t))\le
\begin{cases} -t^4/24\beta_0^2 &\text{for $0\le |s|\le\beta_0$}\\
(\beta_0^2-2t^2)/24            &\text{for $|s|\ge\beta_0$.}
\end{cases}
\end{equation}
Set $\epsilon=S^{-5/24}$ and let $I_1=(-\infty,\epsilon]$, $I_2=[-\epsilon,\epsilon]$, 
$I_3=[\epsilon,\infty)$. Define $\Gamma_k$ by $w(s)$, $s\in I_k$ and $\gamma_k$ by $z(t)$, $t\in I_k$. Let
\begin{equation}
I_{jk}=\frac{e^{(v-u)b}dS^{1/3}}{(2\pi i)^2}\int_{\gamma_j}\,dz\int_{\Gamma_k}\,dw
\frac{e^{-vw+uz}}{w-z}e^{S(f(w)-f(z))},
\notag
\end{equation}
where $u=dS^{1/3}\xi$, $v=dS^{1/3}\eta$. Then
\begin{equation}
dS^{1/3}e^{(\eta-\xi)dS^{1/3}}K_{n,S}^\nu(aS+dS^{1/3}\xi,aS+dS^{1/3}\eta)=\sum_{j,k=1}^3 I_{jk}.
\notag
\end{equation}
We first show that
\begin{equation}\label{3.108}
|I_{1,k}|,|I_{3,k}|\le Ce^{-cS^{1/8}\xi-cS^{1/6}}
\end{equation}
for $S\ge 1$, $k=1,2,3$. Consider $I_{3,k}$, the estimation of $I_{1,k}$ is analogous. If $z\in\gamma_3$ and 
$w\in\Gamma$, then
\begin{equation}\label{3.109}
\left|\frac{e^{-\eta dS^{1/3}(w-b)+\xi dS^{1/3}(z-b)}}{w-z}\right|\le
\frac C{\epsilon}e^{-C\epsilon dS^{1/3}\xi}\le \frac C{\epsilon}e^{-CS^{1/8}\xi}.
\end{equation}
Here we have used the fact that
\begin{equation}
\sum_{j=1}^n\frac{S^2}{(bS-\nu_j)^3}=\sum_{j=1}^n\frac{S}{(bS-\nu_j)^2}\frac 1{b-\nu_j/S}\in
[1/\beta_0,1/\alpha_0],
\notag
\end{equation}
by(\ref{3.1}) and (\ref{3.4}), which gives $d\in [1/\beta_0^{1/3},1/\alpha_0^{1/3}]$. It follows from
(\ref{3.106}) that
\begin{equation}\label{3.110}
\int_{-\infty}^\infty e^{S(\re f(w(s))-f(b))}\,ds\le \frac C{S^{1/4}}.
\end{equation}
Furthermore, (\ref{3.107}) gives
\begin{equation}\label{3.111}
\int_{\epsilon}^\infty e^{S(f(b)-\re f(z(t)))}\,dt\le \frac C{S^{1/4}}e^{-CS\epsilon^4}
\le\frac C{S^{1/4}}e^{-CS^{1/6}}.
\end{equation}
If we combine (\ref{3.108}), (\ref{3.110}) and (\ref{3.111}) we get (\ref{3.108}).

Next, we show that there are positive constants $C, c, S_0$ such that for $S\ge S_0$,
\begin{equation}\label{3.112}
|I_{2,k}|\le Ce^{-\xi}e^{-cS^{1/6}}.
\end{equation}
We treat $I_{2,3}$, the proof for $I_{2,1}$ is analogous. We have that
\begin{align}\label{3.113}
I_{2,3}=\frac{dS^{1/3}}{(2\pi i)^2}&\int_{-\epsilon}^\epsilon\,dt\int_{-\epsilon}^{\infty}\,ds 
\frac{z'(t)}{w(s)-z(t)}
\notag\\
&\times e^{-\eta dS^{1/3}(w(s)-b)+\xi dS^{1/3}(z(t)-b)+
S(f(w(s))-f(b))+S(f(b)-f(z(t)))}.
\end{align}
\begin{claim}\label{clm3.7}
If $|z-b|\le\alpha_0/2$, then
\begin{equation}\label{3.114}
f(z)=f(b)+\frac 13d^3(z-b)^3-\lambda d^4(z-b)^4+R(z-b),
\end{equation}
where
\begin{equation}\label{3.115}
|R(z)|\le 20\alpha_0^{-5}|z|^5
\end{equation}
and $\lambda\in[(\alpha_0^{2/3}/\beta_0)^2/4, (\beta_0^{2/3}/\alpha_0)^2/4]$.
\end{claim}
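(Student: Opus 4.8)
The plan is to read Claim \ref{clm3.7} off the Taylor expansion of $f$ about the double critical point $b$. From (\ref{3.100}), for $k\ge 1$ the logarithmic part contributes $(-1)^{k-1}(k-1)!\,S^{k-1}\sum_j(Sz-\nu_j)^{-k}$ to $f^{(k)}(z)$, while $z^2/2-az$ affects only $f'$ and $f''$; hence $f'(b)=f''(b)=0$ by (\ref{3.1})--(\ref{3.2}), $f'''(b)=2S^2\sum_j(bS-\nu_j)^{-3}=2d^3$ by (\ref{3.3}), and $f^{(4)}(b)=-6S^3\sum_j(bS-\nu_j)^{-4}$. For $\nu\in F_n$ the nearest singularity $\nu_j/S$ of the logarithmic part is at distance $b-\nu_j/S\ge\alpha_0$ from $b$, so $f$ is analytic and equals its Taylor series on $\{|z-b|<\alpha_0\}$, in particular on $\{|z-b|\le\alpha_0/2\}$. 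This gives (\ref{3.114}) with
\begin{equation}
\lambda=\frac1{4d^4}\,S^3\sum_{j=1}^n\frac1{(bS-\nu_j)^4},\qquad
R(w)=\sum_{k\ge5}\frac{(-1)^{k-1}}{k}S^{k-1}\Big(\sum_{j=1}^n\frac1{(bS-\nu_j)^k}\Big)w^k.
\notag
\end{equation}

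Next I would extract the two-sided bound on $\lambda$. Writing $S^2(bS-\nu_j)^{-3}=\frac{S}{(bS-\nu_j)^2}\cdot\frac1{b-\nu_j/S}$ and $S^3(bS-\nu_j)^{-4}=\frac{S}{(bS-\nu_j)^2}\cdot\frac1{(b-\nu_j/S)^2}$, and using the normalization $\sum_j S(bS-\nu_j)^{-2}=1$ from (\ref{3.1}) together with $b-\nu_j/S\in[\alpha_0,\beta_0]$, one obtains $d^3=S^2\sum_j(bS-\nu_j)^{-3}\in[\beta_0^{-1},\alpha_0^{-1}]$ and $S^3\sum_j(bS-\nu_j)^{-4}\in[\beta_0^{-2},\alpha_0^{-2}]$, hence $d^4=d\cdot d^3\in[\beta_0^{-4/3},\alpha_0^{-4/3}]$. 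Dividing, $\lambda\in[\frac14\alpha_0^{4/3}\beta_0^{-2},\frac14\beta_0^{4/3}\alpha_0^{-2}]$, which is precisely the asserted interval $[(\alpha_0^{2/3}/\beta_0)^2/4,(\beta_0^{2/3}/\alpha_0)^2/4]$.

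Finally, for the remainder I would estimate the Taylor coefficients crudely. Using $\sum_j(bS-\nu_j)^{-2}=1/S$ and $bS-\nu_j\ge\alpha_0 S$, one gets for $k\ge3$
\begin{equation}
\Big|\frac{f^{(k)}(b)}{k!}\Big|=\frac{S^{k-1}}{k}\sum_{j=1}^n\frac1{(bS-\nu_j)^k}
\le\frac{S^{k-1}}{k}\cdot\frac1S\cdot\frac1{(\alpha_0 S)^{k-2}}=\frac1{k\alpha_0^{k-2}},
\notag
\end{equation}
so that for $|w|\le\alpha_0/2$,
\begin{equation}
|R(w)|\le\sum_{k\ge5}\frac{\alpha_0^2}{k}\Big(\frac{|w|}{\alpha_0}\Big)^k
\le\frac{\alpha_0^2}{5}\Big(\frac{|w|}{\alpha_0}\Big)^5\sum_{m\ge0}2^{-m}=\frac25\,\alpha_0^{-3}|w|^5,
\notag
\end{equation}
which yields (\ref{3.115}) (the stated constant $20\alpha_0^{-5}$ is deliberately non-sharp; the same computation with $n\le\beta_0^2S$, also from (\ref{3.1}), gives the equivalent form $\frac{2\beta_0^2}{5}\alpha_0^{-5}|w|^5$). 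The argument is essentially bookkeeping: the only points needing a moment's care are repackaging the coefficient of $(z-b)^4$ as $-\lambda d^4$ with the claimed bounds, and checking convergence of the remainder series on $|z-b|\le\alpha_0/2$ — both immediate from the normalizations (\ref{3.1})--(\ref{3.3}) and the pinching $\alpha_0\le b-\nu_j/S\le\beta_0$ on $F_n$ — so I do not anticipate any genuine obstacle.
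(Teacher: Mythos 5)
Your argument is correct and takes essentially the same route as the paper: Taylor-expand $f$ about the double critical point $b$, read off $f'''(b)=2d^3$ and $f^{(4)}(b)=-24\lambda d^4$ from (\ref{3.1})--(\ref{3.3}), and bound both the coefficients and the remainder by writing $S^{k-1}\sum_j(bS-\nu_j)^{-k}=\sum_j\frac{S}{(bS-\nu_j)^2}(b-\nu_j/S)^{-(k-2)}$ and invoking the normalization (\ref{3.1}) together with the pinching $b-\nu_j/S\in[\alpha_0,\beta_0]$ on $F_n$ (the paper uses the integral form of the Taylor remainder and the inequality $|S(b+t(z-b))-\nu_j|\ge|bS-\nu_j|/2$ rather than summing the tail of the series, but this is immaterial). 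The only caveat is the trivial one you already flag: your sharper bound $\frac 25\alpha_0^{-3}|w|^5$ literally implies the stated $20\alpha_0^{-5}|w|^5$ only when $\alpha_0^2\le 50$, but since all that matters downstream is a bound $C(\alpha_0,\beta_0)|w|^5$ this is of no consequence.
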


\begin{proof}
Let $h(t)=f(b+t(z-b)$. Then Taylor's formula yields (\ref{3.114}) with $\lambda=-f^{(4)}(b)/24d^4$ and
\begin{equation}
R(z)=\frac {z^5}{120}f^{(5)}(b)+\frac {z^5}{120}\int_0^1(1-t)^5f^{(5)}(b+t(z-b))\,dt.
\notag
\end{equation}
Now, by (\ref{3.1}) and (\ref{3.4}),
\begin{equation}
-\frac1{24}f^{(4)}(b)=\frac 14\sum_{j=1}^n \frac{S^3}{(bS-\nu_j)^4}\in [\frac 1{4\beta_0^2},\frac 1{4\alpha_0^2}], 
\notag
\end{equation}
and similarly $d^4\in
[1/\beta_0^{4/3},1/\alpha_0^{4/3}]$. Hence, the result for $\lambda$ follows. If $|z-b|\le\alpha_0/2$, then
$|S(b+t(z-b))-\nu_j|\ge |bS-\nu_j|/2$ and thus, by (\ref{3.1}) and (\ref{3.4}),
\begin{equation}
|f^{(5)}(b+t(z-b))|\le 24\cdot 2^5\sum_{j=1}^n\frac{S^4}{(bS-\nu_j)^5}\le \frac{24\cdot 2^5}{\alpha_0^3}.
\notag
\end{equation}
This gives (\ref{3.115}).
\end{proof}

Using (\ref{3.105}), (\ref{3.114}) and making the change of variables $\tau=dS^{1/3}t$, the $t$-integral 
in (\ref{3.113}) becomes
\begin{align}\label{3.116}
&e^{5\pi i/6}\int_0^{\epsilon'}\frac 1{idS^{1/3}s-\tau e^{5\pi i/6}} e^{i\xi\tau e^{\pi i/3}-i\tau^3/3+
\lambda\tau^4 e^{4\pi i/3}/S^{1/3}-R_S(\tau e^{5\pi i/6})}\,d\tau
\notag\\
&+e^{\pi i/6}\int_{-\epsilon'}^0\frac 1{idS^{1/3}s-\tau e^{\pi i/6}} e^{i\xi\tau e^{-\pi i/3}-i\tau^3/3+
\lambda\tau^4 e^{2\pi i/3}/S^{1/3}-R_S(\tau e^{\pi i/6})}\,d\tau,
\end{align}
where $\epsilon'=dS^{1/3}\epsilon=d S^{1/8}$ and $R_S(\tau)=SR(\tau/dS^{1/3})$.
Let $C'_+$ be the curve from $0$ to $\epsilon'$ consisting of the line segments from $0$ to $-i$, from $-i$ to
$\epsilon'-i$ and from $\epsilon'-i$ to $\epsilon'$, and
$C'_-$ the curve from $-\epsilon'$ to $0$ consisting of the line segments from $-\epsilon'$ 
to $-\epsilon'-i$ , from $-\epsilon'-i$ to
$-i$ and from $-i$ to $0$. Now, let $C_-$ be the curve obtained from $C'_-$ by rotating it around the origin by 
an angle $-\pi/3$, and let $C_+$ be the curve obtained from $C'_+$ by rotating it around the origin
an angle $\pi/3$. The sum of the two integrals in (\ref{3.116}) can then be written
\begin{equation}\label{3.117}
i\int_{C_-+C_+}\frac 1{idS^{1/3}-iz}e^{i\xi z+iz^3/3+\lambda z^4/S^{1/3}-R_S(iz)}\,dz.
\end{equation}
The contour $C_-+C_+$ can be deformed into $C_1+C_2+C_3+C_4+C_5$, where

$C_1$ is the line segment from $\epsilon' e^{2\pi i/3}$ to $\epsilon' e^{2\pi i/3}+e^{-5\pi i/6}$,

$C_2$ is the line segment from $\epsilon' e^{2\pi i/3}+e^{-5\pi i/6}$ to $-\sqrt{3}+i$,

$C_3$ is the line segment from $-\sqrt{3}+i$ to $\sqrt{3}+i$,

$C_4$ is the line segment from $\sqrt{3}+i$ to $\epsilon' e^{\pi i/3}+e^{-\pi i/6}$ and

$C_5$ is the line segment from $\epsilon' e^{\pi i/3}+e^{-\pi i/6}$ to $\epsilon' e^{\pi i/3}$.

The integral in (\ref{3.117}) can then be written
\begin{equation}\label{3.118}
i\sum_{j=1}^5\int_{C_j}\frac 1{idS^{1/3}-iz}e^{i\xi z+iz^3/3+\lambda z^4/S^{1/3}-R_S(iz)}\,dz.
\end{equation}
Combining this with (\ref{3.113}) now leads us to the estimate
\begin{equation}
|I_{2,3}|\le\frac{dS^{1/3}}{4\pi^2}\sum_{j=1}^5\int_{\epsilon}^\infty\,ds\int_{C_j}\,|dz|
\frac{e^{\re S(f(b+is)-f(b))}}{|dS^{1/3}s-z|}e^{\re(i\xi z+iz^3/3+\lambda z^4/S^{1/3}-R_S(iz))}.
\notag
\end{equation}
Note that $|dS^{1/3}s-z|\ge\epsilon'/2$ when $s\ge\epsilon$ and $z\in C_j$. Also, by (\ref{3.106}),
\begin{equation}\label{3.119}
\int_{\epsilon}^\infty e^{\re S(f(b+is)-f(b))}\,ds\le\frac{C}{S^{1/4}}e^{-cS\epsilon^4}=\frac{C}{S^{1/4}}
e^{-cS^{1/6}}.
\end{equation}
The contour $C_3$ is given by $z(t)=t+i$, $|t|\le\sqrt{3}$. This gives, using (\ref{3.115}),
\begin{equation}\label{3.12-}
\int_{C_3}e^{\re(i\xi z+iz^3/3+\lambda z^4/S^{1/3}-R_S(iz))}\,|dz|\le Ce^{-\xi}
\end{equation}
for $S\ge 1$. The curve $C_4$ is given by $z(t)=e^{-\pi i/6}+te^{\pi i/3}$, $\sqrt{3}\le t\le 
\epsilon'$. Then $\re z(t)^4\le 0$ if $t\ge\sqrt{3}$, $\re (i\xi z(t)+iz(t)^3/3)\le-\xi+1/3-t^2$ and
$|R_S(iz(t))|\le CS^{-1/24}$. This gives
\begin{equation}\label{3.121}
\int_{C_4}e^{\re(i\xi z+iz^3/3+\lambda z^4/S^{1/3}-R_S(iz))}\,|dz|\le Ce^{-\xi}.
\end{equation}
The curve $-C_5$ is given by $z(t)=\epsilon' e^{\pi i/3}+t e^{-\pi i/6}$, $0\le t\le 1$, and inserting the 
parametrization and estimating we see that we get an estimate
\begin{equation}
\int_{C_5}e^{\re(i\xi z+iz^3/3+\lambda z^4/S^{1/3}-R_S(iz))}\,|dz|\le Ce^{-\xi}
\notag
\end{equation}
if $\epsilon'\ge c_0$, where $c_0$ is a numerical constant. This holds if
$S\ge S_0=(c_0\alpha_0^{1/3})^8\ge (C_0/d)^8$. The estimates for the integrals along $C_1$ and $C_2$ are 
analogous to the estimates for $C_5$ and $C_4$ respectively. Collecting all the estimates we have proved
(\ref{3.112}).

It remains to estimate and compute the asymptotics of $I_{22}$. Let $C'$ be the contour $t\to t+i$,
$|t|\le\epsilon'$ and let $C=C_1+C_2+C_3+C_4+C_5$. The same type of computations that led to the
expression (\ref{3.117}) now gives
\begin{equation}\label{3.122}
I_{22}=-\frac{i}{(2\pi i)^2}\int_C\,dz\int_{C'}\,dw\frac{e^{i\eta w+i\xi z}}{z+w}e^{iw^3/3-\lambda w^4/S^{1/3}
+R_S(-iw)}e^{iz^3/3+\lambda z^4/S^{1/3}-R_S(iz)}.
\end{equation}
By introducing the parametrizations of $C$ and $C'$ we can now again prove that
\begin{equation}\label{3.123}
|I_{22}|\le Ce^{-(\xi+\eta)},
\end{equation}
for $S\ge S_0$ with a suitable $S_0$ that only depends on $\alpha_0$. Combining the estimates (\ref{3.108}),
(\ref{3.112}) and (\ref{3.123}) we obtain (\ref{3.5}).

We now take $S=S_n=\kappa n$. It is clear from (\ref{3.108}) and (\ref{3.112}) that all contributions 
except $I_{22}$ go to zero uniformly for $\xi,\eta$ in a compact set and all $\nu\in F_n$ as $n\to\infty$.
Let $\tilde{C}$ be the ``limit'' of $C$ as $n\to\infty$, i.e. $\tilde{C}=\tilde{C_1}+\tilde{C_2}+\tilde{C_3}$,
where $-\tilde{C_1}:-\sqrt{3}+i+te^{2\pi i/3}$, $t\ge 0$, $\tilde{C_2}:t+i$, $|t|\le\sqrt{3}$
$\tilde{C_3}:\sqrt{3}+i+te^{\pi i/3}$, $t\ge 0$. Introducing the parametrizations into the integral in
(\ref{3.122}) we see that we can let $n\to\infty$ in (\ref{3.122}) with $S=S_n$, to obtain
\begin{equation}\label{3.124}
\lim_{n\to\infty} I_{22}=
-\frac{i}{(2\pi i)^2}\int_{\tilde{C}}\,dz\int_{\im w=1}\,dw\frac{e^{i\eta w+i\xi z}}{z+w}e^{i(w^3+z^3)/3}
\end{equation}
uniformly for $\xi,\eta$ in a compact set and all $\nu\in F_n$. A deformation argument now shows that we can deform
$\tilde{C}$ to $\im z=1$, and in this way we see that the right hand side of (\ref{3.124}) equals
the Airy kernel (\ref{1.7}), see e.g. proposition 2,3 in \cite{JDPNG}. This proves (\ref{3.6}) and finishes the
proof of theorem \ref{thm3.1}.

\subsection{Proofs of some lemmas}\label{sect3.4}
The proofs of lemma 3.5 and lemma 3.6 can be extracted from \cite{BMT} and \cite{Bai1}. The presentation below
is somewhat streamlined for our purposes. We use notation similar to that in \cite{BMT} and \cite{Bai1}.

Recall that $X=(x_{ij})$ is an Hermitian Wigner matrix, such that $\mathbb{E}[|x_{ij}|^2]=\sigma^2$ and
$\mathbb{E}[|x_{ij}|^4]\le K$ for all $1\le i\le j\le n$, where $K<\infty$ is a constant. 
Let $X_k$ be the matrix obtained from $X$ by removing row $k$ and column $k$, and let $\alpha_k$ be column
$k$ of $X$ with element number $k$ removed. Set
\begin{equation}
D=\left(\frac 1{\sqrt{n}}X-zI\right)^{-1},\quad D_k=\left(\frac 1{\sqrt{n}}X_k-zI\right)^{-1}.
\notag
\end{equation}
Write $v=\im z$. We can assume that $v>0$. We need some identities from matrix theory.

\begin{lemma}\label{lem3.8} The following identities hold,
\begin{equation}\label{1}
\Tr D=\sum_{k=1}^n\frac 1{x_{kk}/\sqrt{n}-z-\alpha_k^*D_k\alpha_k},
\end{equation}
\begin{equation}\label{2}
\Tr D-\Tr D_k=\sum_{k=1}^n\frac {1+\frac 1n\alpha_k^*D_k^2\alpha_k}{x_{kk}/\sqrt{n}-z-\alpha_k^*D_k\alpha_k}.
\end{equation}
\end{lemma}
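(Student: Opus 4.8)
\emph{Proof plan.} The plan is to obtain both identities from the Schur‑complement (block) decomposition of $\frac 1{\sqrt n}X-zI$ with respect to index $k$, together with the standard formula for the inverse of a $2\times 2$ block matrix. Write $A=\frac 1{\sqrt n}X$ and $v=\im z$; we may take $v>0$. Fix $k$ and let $P_k$ be the permutation moving index $k$ to the first slot; since conjugation by $P_k$ preserves traces and eigenvalues, and since $x_{kj}=\overline{x_{jk}}$, it turns $A-zI$ into
\[
\begin{pmatrix} x_{kk}/\sqrt n-z & \alpha_k^*/\sqrt n \\[2pt] \alpha_k/\sqrt n & \frac 1{\sqrt n}X_k-zI \end{pmatrix},
\]
whose lower–right block is $D_k^{-1}$. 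First I would record that this block matrix is invertible (because $A$ is Hermitian and $v\neq 0$) and that its Schur complement relative to the lower–right block is the scalar
\[
h_k(z)=\frac{x_{kk}}{\sqrt n}-z-\frac 1n\,\alpha_k^*D_k\alpha_k ,
\]
which is nonzero: a one–line computation using $D_k-D_k^*=2iv\,D_kD_k^*$ gives $\im h_k(z)=-v\bigl(1+\tfrac1n\|D_k^*\alpha_k\|^2\bigr)\neq 0$.

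For (\ref{1}): the block–inverse formula says the $(1,1)$ entry of the inverse of the displayed matrix equals $h_k(z)^{-1}$; undoing $P_k$, this is exactly $(D)_{kk}$. Summing over $k$ gives $\Tr D=\sum_{k=1}^n h_k(z)^{-1}$, which is (\ref{1}).

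For (\ref{2}) I would use the full block–inverse formula: with $M=D_k^{-1}$ and $b=\alpha_k/\sqrt n$, the lower–right block of the inverse is $M^{-1}+h_k^{-1}M^{-1}bb^*M^{-1}=D_k+h_k^{-1}D_k bb^*D_k$. Taking the trace of the whole inverse, and using cyclicity of the trace to get $\Tr\bigl(D_k bb^*D_k\bigr)=b^*D_k^2 b=\tfrac1n\alpha_k^*D_k^2\alpha_k$, yields
\[
\Tr D=h_k(z)^{-1}+\Tr D_k+h_k(z)^{-1}\,\frac 1n\alpha_k^*D_k^2\alpha_k ,
\]
and rearranging gives (\ref{2}). (An equivalent route: $\Tr D-\Tr D_k=-\frac{d}{dz}\log\dfrac{\det(A-zI)}{\det(\frac1{\sqrt n}X_k-zI)}=-\frac{d}{dz}\log h_k(z)=-h_k'(z)/h_k(z)$, and $\frac{d}{dz}D_k=D_k^2$ gives $h_k'(z)=-1-\tfrac1n\alpha_k^*D_k^2\alpha_k$, leading to the same formula.)

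Neither step is deep. The only points requiring care are the bookkeeping — verifying that deleting row and column $k$ really produces the matrix $X_k$, that the off–diagonal block is $\alpha_k/\sqrt n$ with the correct factor $1/\sqrt n$ (hence the $1/n$ in $h_k(z)$ and in the numerator of (\ref{2})), and that permuting does not affect $\Tr D_k$ — and recording the non‑vanishing of $h_k(z)$, so that the reciprocals in (\ref{1}) and (\ref{2}) are meaningful for every $z$ with $\im z\neq 0$.
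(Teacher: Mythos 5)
Your proof is correct and uses essentially the same approach as the paper: both arguments are Schur‑complement computations, with the $(1,1)$ entry of the block inverse giving $(D)_{kk}=h_k(z)^{-1}$ for (\ref{1}), and the trace of the full block‑inverse formula giving (\ref{2}). One remark worth recording: your $h_k(z)=x_{kk}/\sqrt n-z-\tfrac 1n\alpha_k^*D_k\alpha_k$ carries the factor $1/n$, whereas the lemma as printed omits it (and also has a spurious $\sum_k$ on the right of (\ref{2}) even though the left side depends on $k$); the $1/n$ is the version consistent with the later definition of $\beta_k$ and with Lemma 3.9, so you are proving the intended statement and the printed lemma has typos.
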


\begin{proof}
The identity (\ref{1}) follows from Cramer's rule and the formula
\begin{equation}
\det\left(\begin{matrix} A & B\\C & D\end{matrix}\right)=\det(A)\det(D-CA^{-1}B),
\notag
\end{equation}
which holds whenever $A$ is invertible. The formula (\ref{2}) follows from the formula
\begin{equation}
\left(\begin{matrix} A & B\\C & D\end{matrix}\right)^{-1}
=\left(\begin{matrix} A^{-1}+A^{-1}B(D-CA^{-1}B)^{-1}CA^{-1} &
-A^{-1}B(D-CA^{-1}B)^{-1} \\ -(D-CA^{-1}B)^{-1}CA^{-1} & (D-CA^{-1}B)^{-1}\end{matrix}\right)
\notag
\end{equation}
for the inverse of a block matrix.
\end{proof}

Let
\begin{align}
\beta_k&=-x_{kk}/\sqrt{n}+z+\alpha_k^*D_k\alpha_k,
\notag\\
\beta_k^*&=z+\frac{\sigma^2}n\Tr D_k,
\notag\\
\beta&=z+\frac{\sigma^2}n\Tr D,
\notag\\
\epsilon_k^*&=\beta_k-\beta_k^*=-x_{kk}/\sqrt{n}+\frac 1n(\alpha_k^*D_k\alpha_k-\sigma^2\Tr D_k).
\notag
\end{align}
Let $\mathbb{E}_k$ denote expectation with respect to the elemts in row/colum $k$ in $X$.
We need the following basic estimates.

\begin{lemma}\label{lem3.9}
\begin{equation}\label{3}
\im\beta_k=v(1+\frac 1n\alpha_k^*D_kD_k^*\alpha_k)\ge v,
\end{equation}
\begin{equation}\label{4}
\im\beta_k^*=v(1+\frac 1n\Tr D_kD_k^*)\ge v,
\end{equation}
\begin{equation}\label{5}
|1+\frac 1n\alpha_k^*D_k^2\alpha_k|\le 1+\frac 1n\alpha_k^*D_kD_k^*\alpha_k,
\end{equation}
\begin{equation}\label{6}
|\Tr D-\Tr D_k|\le \frac 1v,
\end{equation}
\begin{equation}\label{7}
\mathbb{E}_k[|\alpha_k^*D_k\alpha_k-\sigma^2\Tr D_k|^2]\le K\Tr D_kD_k^*,
\end{equation}
\begin{equation}\label{8}
\mathbb{E}_k[|\alpha_k^*D_k^2\alpha_k-\sigma^2\Tr D_k^2|^2]\le K\Tr D_k^2{D_k^*}^2.
\end{equation}
\end{lemma}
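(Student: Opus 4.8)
The plan is to establish the six estimates (3)--(8) essentially in the order listed, using only the resolvent identity for $D_k$, the spectral decomposition of $X_k/\sqrt n$, the identities of Lemma \ref{lem3.8}, and one standard second-moment bound for quadratic forms in independent coordinates. The only input requiring genuine computation is that last quadratic form bound, which underlies (7) and (8); everything else will be a one- or two-line consequence, so the crux of the lemma is really just that bound, together with a little care in (6) to get the sharp constant $1/v$.

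First I would record the resolvent identity: since $X_k$ is Hermitian, $D_k-D_k^*=D_k\bigl((D_k^*)^{-1}-D_k^{-1}\bigr)D_k^*=(z-\bar z)D_kD_k^*=2iv\,D_kD_k^*$, so $(D_k-D_k^*)/(2i)=v\,D_kD_k^*\ge 0$. Inequality (3) then drops out by taking imaginary parts in the definition of $\beta_k$ (using that $x_{kk}$ is real): $\im\beta_k=v+\frac1n\alpha_k^*\bigl((D_k-D_k^*)/2i\bigr)\alpha_k=v(1+\frac1n\alpha_k^*D_kD_k^*\alpha_k)\ge v$ by positivity. Replacing the quadratic form $\alpha_k^*(\cdot)\alpha_k$ by $\Tr(\cdot)$ gives (4) identically. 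For (5) I would diagonalize $X_k/\sqrt n=\sum_j\mu_ju_ju_j^*$ with $\{u_j\}$ orthonormal, so that $\frac1n\alpha_k^*D_k^2\alpha_k=\frac1n\sum_j(\mu_j-z)^{-2}|u_j^*\alpha_k|^2$; since $|(\mu_j-z)^{-2}|=|\mu_j-z|^{-2}$, the triangle inequality gives $|1+\frac1n\alpha_k^*D_k^2\alpha_k|\le 1+\frac1n\sum_j|\mu_j-z|^{-2}|u_j^*\alpha_k|^2=1+\frac1n\alpha_k^*D_kD_k^*\alpha_k$. Estimate (6) then follows by feeding (5) and (3) into the second identity of Lemma \ref{lem3.8}, which for a fixed index $k$ reads $\Tr D-\Tr D_k=-(1+\frac1n\alpha_k^*D_k^2\alpha_k)/\beta_k$: the numerator is bounded in modulus by $1+\frac1n\alpha_k^*D_kD_k^*\alpha_k$ using (5), while $|\beta_k|\ge\im\beta_k=v(1+\frac1n\alpha_k^*D_kD_k^*\alpha_k)$ by (3), and the two factors cancel to leave exactly $1/v$. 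I note that the naive eigenvalue-interlacing bound would only give $\pi/v$; obtaining the clean constant $1/v$ is the one place where a little care (routing through Lemma \ref{lem3.8}) is needed.

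For (7) and (8) the plan is to condition on $X_k$ --- equivalently on $D_k$, which is independent of the vector $\alpha_k$ of off-diagonal entries in column $k$ --- and invoke the following general estimate: for a fixed complex matrix $B=(b_{pq})$ and a random vector $\alpha$ whose entries are independent with $\mathbb{E}\alpha_p=0$, $\mathbb{E}|\alpha_p|^2=\sigma^2$, $\mathbb{E}\alpha_p^2=0$ and $\mathbb{E}|\alpha_p|^4\le K$, one has $\mathbb{E}[|\alpha^*B\alpha-\sigma^2\Tr B|^2]\le K\,\Tr BB^*$. To prove this I would split $\alpha^*B\alpha-\sigma^2\Tr B=\sum_p b_{pp}(|\alpha_p|^2-\sigma^2)+\sum_{p\neq q}\bar\alpha_p b_{pq}\alpha_q$; the two sums are uncorrelated (the cross terms vanish by independence and $\mathbb{E}\alpha_p=0$), the diagonal sum contributes $\sum_p|b_{pp}|^2(\mathbb{E}|\alpha_p|^4-\sigma^4)\le(K-\sigma^4)\sum_p|b_{pp}|^2$, and in the off-diagonal sum the only index pairing that survives is $(p,q)=(p',q')$, the other pairing being killed by $\mathbb{E}\alpha_p^2=0$, so it contributes $\sigma^4\sum_{p\neq q}|b_{pq}|^2$; since $\sigma^4\le K$ by Jensen, the total is at most $K\Tr BB^*$. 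Applying this with $B=D_k$ gives (7) and with $B=D_k^2$ gives (8), using $(D_k^2)(D_k^2)^*=D_k^2(D_k^*)^2$. The use of the vanishing pseudo-second moment $\mathbb{E}\alpha_p^2=0$, valid here because $\re x_{ij}$ and $\im x_{ij}$ are independent with equal variances, is the one point in the argument specific to the complex Hermitian setting, and this quadratic form bound is the only step I expect to require any real work.
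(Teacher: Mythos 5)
Your proof is correct and follows essentially the same route as the paper: (3)--(5) via the resolvent identity and spectral decomposition of $D_k$, (6) by feeding (3) and (5) into the Schur-complement identity from Lemma \ref{lem3.8}, and (7)--(8) by conditioning on $X_k$ and estimating the variance of the quadratic form $\alpha_k^*B\alpha_k$. The only stylistic difference is in (7)--(8): you first split $\alpha^*B\alpha-\sigma^2\Tr B$ into a diagonal part and an off-diagonal part and use that they are uncorrelated, whereas the paper expands the full fourth-moment sum $\mathbb{E}_k[\alpha_k^*B^*\alpha_k\,\alpha_k^*B\alpha_k]$ directly and then subtracts $\sigma^4(\Tr B^*)(\Tr B)$; both decompositions rely on the same three moment conditions (zero mean, variance $\sigma^2$, and vanishing pseudo-variance $\mathbb{E}\alpha_p^2=0$), the last of which the paper uses implicitly while you correctly flag it as the one place the complex Hermitian structure enters.
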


\begin{proof}
We see that
\begin{equation}
\im\beta_k=v+\frac 1{2in}(\alpha_k^*D_k\alpha_k-\alpha_k^*D_k^*\alpha_k)
=v(1+\frac 1n\alpha_k^*D_kD_k^*\alpha_k)\ge v,
\notag
\end{equation}
which gives (\ref{3}) and a similar argument proves (\ref{4}). To prove (\ref{5}) we write
$D_k=U^*\text{diag\,}((\lambda_j/\sqrt{n}-z)^{-1})U$, where $\lambda_1,\dots,\lambda_{n-1}$ are the
eigenvalues of $X_k$ and $U$ is unitary. Then,
\begin{align}
&|\alpha_k^*D_k^2\alpha_k|\le\sum_{j=1}^n|\lambda_j/\sqrt{n}-z|^{-2}|(U\alpha_k)_j|^2
\notag\\&=
(U\alpha_k)^*\text{diag\,}((\lambda_j/\sqrt{n}-z)^{-1})\text{diag\,}((\lambda_j/\sqrt{n}-\bar{z})^{-1})U\alpha_k
=\alpha_k^*D_kD_k^*\alpha_k.
\notag
\end{align}
We see from (\ref{2}), (\ref{3}) and (\ref{5}) that
\begin{equation}
|\Tr D-\Tr D_k|=
\frac{|1+\frac 1n\alpha_k^*D_k^2\alpha_k|}{|\beta_k|}\le
\frac{1+\frac 1n\alpha_k^*D_kD_k^*\alpha_k}{v(1+\frac 1n\alpha_k^*D_kD_k^*\alpha_k)}=\frac 1v,
\notag
\end{equation}
which proves (\ref{6}). Let $A=(a_{ij})$ be an $(n-1)\times(n-1)$ matrix that does not depend on
the elements in row/column $k$. Note that
\begin{equation}\label{0}
\mathbb{E}_k[\alpha_k^*A\alpha_k]=\sum_{j=1}^{n-1}\sigma^2 a_{jj}=\sigma^2\Tr A.
\end{equation}
Hence,
\begin{equation}
\mathbb{E}_k[|\alpha_k^*D_k\alpha_k-\sigma^2\Tr D_k|^2=\mathbb{E}_k[\alpha_k^*A^*\alpha_k
\alpha_k^*A\alpha_k]-\sigma^4(\Tr A^*)(\Tr A).
\notag
\end{equation}
Now,
\begin{align}
\mathbb{E}_k[\alpha_k^*A^*\alpha_k\alpha_k^*A\alpha_k]&=\mathbb{E}_k\left[\sum_{i,j,r,s}
(\alpha_k^*)_i\bar{a}_{ji}(\alpha_k)_j(\alpha_k^*)_ra_{rs}(\alpha_k)_s\right]
\notag\\
&\le K\sum_{i}|a_{ii}|^2+\sum_{i\neq j}\sigma^4\bar{a}_{ii}a_{jj}
+\sum_{i\neq j}\sigma^4\bar{a}_{ji}a_{ji}
\notag\\
&=
(K-2\sigma^4)\sum_{i}|a_{ii}|^2+\sigma^4(\Tr A^*)(\Tr A)+\sigma^4\Tr A^*A
\notag\\
&\le K\Tr A^*A+\sigma^4(\Tr A^*)(\Tr A).
\notag
\end{align}
This proves (\ref{7}) and (\ref{8}).
\end{proof}

Let $\mathcal{F}_{k}$ be the $\sigma$-algebra generated by $\im x_{jk}$, $\re x_{jk}$, $k+1\le i\le j\le n$,
$\mathcal{F}_n=\emptyset$.
Define
\begin{equation}
z_k=\mathbb{E}[\Tr D|\mathcal{F}_{k-1}]-\mathbb{E}[\Tr D|\mathcal{F}_{k}].
\notag
\end{equation}
Then,
\begin{equation}\label{9}
\mathbb{E}[|\Tr D-\mathbb{E}[\Tr D]|^2]=\mathbb{E}\left[\sum_{j,k=1}^n\bar{z}_jz_k\right]=
\sum_{k=1}^n\mathbb{E}[|z_k|^2]
\end{equation}
by orthogonality. Since $\Tr D_k$ is independent of the elements in row/column $k$,
\begin{equation}
\mathbb{E}[\Tr D_k|\mathcal{F}_{k-1}]=\mathbb{E}[\Tr D_k|\mathcal{F}_{k}]
\notag
\end{equation}
and hence
\begin{equation}\label{10}
z_k=\mathbb{E}[\Tr D-\Tr D_k|\mathcal{F}_{k-1}]-\mathbb{E}[\Tr D-\Tr D_k|\mathcal{F}_{k}].
\end{equation}
We can now give the 

\begin{proof} ({\it of lemma \ref{lem2.5}}) Note that $m_n(z)=\frac 1n\Tr D$ and
that (\ref{6}), (\ref{9}) and (\ref{10}) give
\begin{equation}
\mathbb{E}[|\Tr D-\mathbb{E}[\Tr D]|^2]\le\sum_{k=1}^n\frac 2{v^2}=\frac{2n}{v^2}.
\notag
\end{equation}
\end{proof}

We turn next to the

\begin{proof} ({\it of lemma \ref{lem3.5}}) From (\ref{2}) we obtain
\begin{align}
\Tr D-\Tr D_k&=(1+\frac 1n\alpha_k^*D_k^2\alpha_k)(\frac 1{\beta_k^*}-\frac 1{\beta_k})-
(1+\frac 1n\alpha_k^*D_k^2\alpha_k)\frac 1{\beta_k^*}
\notag\\
&=\frac{\epsilon_k^*(1+\frac 1n\alpha_k^*D_k^2\alpha_k)}{\beta_k^*\beta_k}-\frac{1+\frac{\sigma^2}n\Tr D_k^2}
{\beta_k^*}-\frac{\alpha_k^*D_k^2\alpha_k-\sigma^2\Tr D_k^2}{n\beta_k^*}.
\notag
\end{align}
Since neither $\Tr D_k^2$ or $\beta_k^*$ depends on row/column $k$, we see that
\begin{equation}
\mathbb{E}\left[\frac{1+\frac{\sigma^2}n\Tr D_k^2}{\beta_k^*}\left|\right.\mathcal{F}_{k-1}\right]
=\mathbb{E}\left[\frac{1+\frac{\sigma^2}n\Tr D_k^2}{\beta_k^*}\left|\right.\mathcal{F}_{k}\right].
\notag
\end{equation}
Hence, from (\ref{10}), we see that
\begin{align}
z_k&=\mathbb{E}\left[\frac{\epsilon_k^*(1+\frac 1n\alpha_k^*D_k^2\alpha_k)}{\beta_k^*\beta_k}
\left|\right.\mathcal{F}_{k-1}\right]-
\mathbb{E}\left[\frac{\epsilon_k^*(1+\frac 1n\alpha_k^*D_k^2\alpha_k)}{\beta_k^*\beta_k}
\left|\right.\mathcal{F}_{k}\right]
\notag\\
&+\mathbb{E}\left[\frac{\alpha_k^*D_k^2\alpha_k-\sigma^2\Tr D_k^2}{n\beta_k^*}
\left|\right.\mathcal{F}_{k-1}\right]-
\mathbb{E}\left[\frac{\alpha_k^*D_k^2\alpha_k-\sigma^2\Tr D_k^2}{n\beta_k^*}
\left|\right.\mathcal{F}_{k}\right].
\notag
\end{align}
Thus,
\begin{equation}
\mathbb{E}[|z_k|^2]\le 2\mathbb{E}
\left[\frac{|\epsilon_k^*|^2|1+\frac 1n\alpha_k^*D_k^2\alpha_k|^2}{|\beta_k^*|^2|\beta_k|^2}\right]
+2\mathbb{E}
\left[\frac{|\alpha_k^*D_k^2\alpha_k-\sigma^2\Tr D_k^2|^2}{n^2|\beta_k^*|^2}\right].
\notag
\end{equation}
We see from (\ref{3}) and (\ref{5}) that
\begin{equation}\label{11}
\frac{|1+\frac 1n\alpha_k^*D_k^2\alpha_k|^2}{|\beta_k|^2}\le\frac 1{v^2}
\end{equation}
and from (\ref{3}) and (\ref{7}) we obtain
\begin{align}\label{12}
\mathbb{E}_k[|\epsilon_k^*|^2]&=\frac{\sigma^2}n+\frac{1}{n^2}
\mathbb{E}_k[|\alpha_k^*D_k^2\alpha_k-\sigma^2\Tr D_k^2|^2]
\notag\\
&\le\frac{\sigma^2}n+\frac{K}{n^2}\Tr D_kD_k^*\le\frac{K+\sigma^2}{vn}\im\beta_k^*.
\end{align}
Consequently, by (\ref{4}), (\ref{11}) and (\ref{12})
\begin{equation}
\mathbb{E}
\left[\frac{|\epsilon_k^*|^2|1+\frac 1n\alpha_k^*D_k^2\alpha_k|^2}{|\beta_k^*|^2|\beta_k|^2}\right]
\le\frac 1{v^3}\mathbb{E}\left[\frac 1{|\beta_k^*|^2}\mathbb{E}_k[|\epsilon_k^*|^2]\right]
\le\frac{K+\sigma^2}{nv^4}.
\notag
\end{equation}
Note that,
\begin{equation}
\Tr D_k^2{D_k^*}^2=\sum_{j=1}^{n-1}\frac 1{|\lambda_j-z|^4}
\le\frac 1{v^2}\sum_{j=1}^{n-1}\frac 1{|\lambda_j-z|^2}
=\frac 1{v^2}\Tr D_kD_k^*
\notag
\end{equation}
and hence, using also (\ref{4}) and (\ref{8}),
\begin{align}
\mathbb{E}
&\left[\frac{|\alpha_k^*D_k^2\alpha_k-\sigma^2\Tr D_k^2|^2}{n^2|\beta_k^*|^2}\right]
\le \frac 1{nv}\mathbb{E}
\left[\frac 1{n|\beta_k^*|}\mathbb{E}_k[|\alpha_k^*D_k^2\alpha_k-\sigma^2\Tr D_k^2|^2]\right]
\notag\\
&\le\frac{K}{nv}\mathbb{E}
\left[\frac 1{|\beta_k^*|}\frac 1n\Tr D_k^2{D_k^*}^2\right]
\le\frac{K}{nv^3}\mathbb{E}
\left[\frac {1+\frac 1n\Tr D_kD_k^*}{\im\beta_k^*}\right]=\frac {K}{nv^4}.
\notag
\end{align}
We see now from (\ref{9}) that
\begin{equation}\label{13}
\mathbb{E}[|\Tr D-\mathbb{E}[\Tr D]|^2]\le\frac{2K+\sigma^2}{v^4}
\end{equation}
\end{proof}

We still have to give the 

\begin{proof} (\it of lemma \ref{lem3.6}\rm) Set
\begin{equation}\label{14}
\delta=\mathbb{E}[m_n(z)]+\frac 1{z+\sigma^2\mathbb{E}[m_n(z)]}.
\end{equation}
We see that
\begin{align}\label{15}
\delta&=\mathbb{E}\left[m_n(z)+\frac 1{z+\sigma^2m_n(z)}-\left(\frac 1{z+\sigma^2m_n(z)}-
\frac 1{z+\sigma^2\mathbb{E}[m_n(z)]}\right)\right]
\notag\\
&=\mathbb{E}\left[m_n(z)+\frac 1{\beta}\right]+\sigma^2\mathbb{E}\left[
\frac{m_n(z)-\mathbb{E}[m_n(z)]}{\beta\mathbb{E}[\beta]}\right].
\end{align}
By (\ref{14}), $\im\beta\ge v$, we obtain
\begin{equation}\label{16}
\left|\sigma^2\mathbb{E}\left[
\frac{m_n(z)-\mathbb{E}[m_n(z)]}{\beta\mathbb{E}[\beta]}\right]\right|\le
\frac{\sigma^2}{v^2}\frac{\sqrt{2K+\sigma^2}}{nv^2}=\frac{\sigma^2\sqrt{2K+\sigma^2}}
{nv^4}.
\end{equation}
Using (\ref{1}) we find
\begin{align}\label{17}
\mathbb{E}\left[m_n(z)+\frac 1{\beta}\right]&=\mathbb{E}\left[\frac 1n\sum_{k=1}^n
\left(\frac 1{\beta}-\frac 1{\beta_k^*}+\frac 1{\beta_k^*}
\frac 1{\beta_k}\right)\right]
\notag\\
&=\mathbb{E}\left[\frac 1n\sum_{k=1}^n\frac{\epsilon_k^*}{\beta_k\beta_k^*}\right]
-\mathbb{E}\left[\frac {\sigma^2}n\sum_{k=1}^n\frac{\Tr D-\Tr D_k}{\beta\beta_k^*}\right].
\end{align}
It follows from (\ref{3}), (\ref{6}) and  $\im\beta\ge v$, that
\begin{equation}\label{18}
\left|\mathbb{E}\left[\frac {\sigma^2}n\sum_{k=1}^n\frac{\Tr D-\Tr D_k}{\beta\beta_k^*}\right]\right|
\le\frac{\sigma^2}{nv^3}.
\end{equation}
Also,
\begin{equation}
\mathbb{E}\left[\frac 1n\sum_{k=1}^n\frac{\epsilon_k^*}{\beta_k\beta_k^*}\right]=
\frac 1n\sum_{k=1}^n\mathbb{E}\left[\frac{\epsilon_k^*}{{\beta_k^*}^2}\right]-
\frac 1n\sum_{k=1}^n\mathbb{E}\left[\frac{{\epsilon_k^*}^2}{\beta_k{\beta_k^*}^2}\right].
\notag
\end{equation}
We see that
\begin{equation}
\mathbb{E}\left[\frac{\epsilon_k^*}{{\beta_k^*}^2}\right]=
\mathbb{E}\left[\frac 1{{\beta_k^*}^2}\mathbb{E}_k[\epsilon_k^*]\right]=0
\notag
\end{equation}
by (\ref{0}). Furthermore, by (\ref{3}), (\ref{4}) and (\ref{12}),
\begin{equation}
\left|\mathbb{E}\left[\frac{{\epsilon_k^*}^2}{\beta_k{\beta_k^*}^2}\right]\right|\le\frac{K+\sigma^2}{nv^3}.
\notag
\end{equation}
Combining this with (\ref{18}), we see from (\ref{15}), (\ref{16}) and (\ref{17}) that
\begin{equation}\label{19}
|\delta|\le\frac{K+2\sigma^2}{nv^3}+\frac{\sigma^2\sqrt{2K+\sigma^2}}{nv^4}\le\frac{C_0}{nv^4}
\end{equation}
if $v\le 1$.

Solving the equation (\ref{14}) for $\mathbb{E}[m_n(z)]$ we obtain
\begin{equation}\label{20}
\mathbb{E}[m_n(z)]=\frac 1{2\sigma^2}(-z+\sigma^2\delta+\sqrt{(z+\sigma^2\delta)^2-4\sigma^2}).
\end{equation}
Since $\im\mathbb{E}[m_n(z)]\ge 0$ we have to choose the square root with positive imaginary part.
Let
\begin{equation}
m(z)=\frac 1{2\sigma^2}(-z+\sqrt{z^2-4\sigma^2})=
\frac 1{2\sigma^2\pi}\int_{-2\sigma}^{2\sigma}\frac{\sqrt{4\sigma^2-x^2}}{x-z}\,dx.
\notag
\end{equation}
Then,
\begin{equation}\label{21}
|\mathbb{E}[m_n(z)]-m(z)|\le\frac{|\delta|}2+\frac 1{2\sigma^2}|\sqrt{z^2-4\sigma^2}-
\sqrt{z^2-4\sigma^2}|.
\end{equation}
Note that, by (\ref{19}) and the assumption that
$v\ge (nr_n)^{-1/5}$, it follows that $|\delta/v|\le C_0r_n$. For $n$ sufficiently
large, $C_0r_n\le 1/3$ and hence $|\delta/v|\le 1/3$. We now take $\sigma^2=1/4$.
It follows from (\ref{20}) and $\im\mathbb{E}[m_n(z)]\ge 0$ that
$\im\sqrt{(z+\delta/4)^2-1}\ge v$, and similarly we find
$\im\sqrt{z^2-1}\ge v$. Thus,
\begin{align}\label{22}
|\sqrt{(z+\delta/4)^2-1}-\sqrt{z^2-1}|&=\frac{|2\delta z+\delta^2|}
{|\sqrt{(z+\delta/4)^2-1}+\sqrt{z^2-1}|}
\notag\\
&\le\frac{\delta}v(|z|+\delta v)\le\frac{\delta}v (|z|+1).
\end{align}
If $|z|\le 4$, (\ref{19}), (\ref{21}) and (\ref{22}) gives (\ref{3.43}). If 
$|z|\ge 4$, then (\ref{22}) gives
\begin{equation}
|\sqrt{(z+\delta/4)^2-1}-\sqrt{z^2-1}|\le\frac 12|z|,
\notag
\end{equation}
and since $|\sqrt{z^2-1}|\ge\frac 12|z|$, we obtain
\begin{equation}
|\sqrt{(z+\delta/4)^2-1}+\sqrt{z^2-1}|\ge\frac 12|z|.
\notag
\end{equation}
Thus, (\ref{22}) gives
\begin{equation}
|\sqrt{(z+\delta/4)^2-1}-\sqrt{z^2-1}|
\le\frac {2|\delta||z|+\delta^2}{|z|/2}\le 5\frac{\delta}v,
\notag
\end{equation}
and again we get (\ref{3.43}).
\end{proof}
Finally, we prove lemma \ref{lem2.3}.
\begin{proof} Let $\det_2(I-A)$ be the regularized determinant defined for Hilbert-Schmidt operators,
see e.g. \cite{Si}. If $A$ is a trace-class operator, then
\begin{equation}\label{23}
\det(I-A)={\det}_2(I-A)e^{-\Tr A},
\end{equation}
where the left hand side is the Fredholm determinant. Now, see e.g. \cite{Si} ch. 9, for two 
Hilbert-Schmidt operators $A$ and $B$,
\begin{align}\label{24}
|{\det}_2(I-A)-{\det}_2(I-B)|&\le ||A-B||_2e^{\frac 12(||A||_2+||B||_2+1)^2}
\notag\\
&\le ||A-B||_2e^{\frac 12(||A-B||_2+2||B||_2+1)^2}
\end{align}
and
\begin{equation}\label{25}
|{\det}_2(I-A)|\le e^{\frac 12||A||_2+1}.
\end{equation}
Using (\ref{23}) we can write
\begin{align}
{\det}_2(I-A)-{\det}_2(I-B)&=({\det}_2(I-A)-{\det}_2(I-B))e^{-\Tr A}
\notag\\
&+
{\det}_2(I-B)e^{-\Tr B}(e^{-(\Tr A-\Tr B)}-1)
\notag
\end{align}
and the inequality (\ref{2.18}) follows from (\ref{24}) and (\ref{25}).
\end{proof}


\end{document}